\documentclass[11pt]{amsart}

\setlength{\textwidth}{5.7in}
\setlength{\oddsidemargin}{.4in}
\setlength{\evensidemargin}{.4in}

%%%%%%%%%%%
\usepackage{amsmath,amsthm,amssymb,amssymb, paralist, xspace, graphicx, url, amscd, euscript, mathrsfs,stmaryrd,epic,eepic,color, longtable}
%%%%%%%%%%%%%%%

\usepackage[all]{xy}
\usepackage{amsmath,amscd}
\SelectTips{cm}{}
\usepackage{epsfig}
\usepackage{here}
\usepackage{graphicx}
\usepackage[all]{xy}
\usepackage{psfrag}
\usepackage{graphicx,transparent}
\usepackage{enumerate}
\usepackage{caption}
%\usepackage[notcite,notref]{showkeys}

%%%%%%%%%%%%%%%%%%%%%%%%%%%%%%%%%%%
\usepackage[colorlinks=true]{hyperref}

\usepackage{pstricks}

%%%%%%%%%%%%%%%%%%%%%%%%%%%%%%%%%%%%%%%%%

\numberwithin{equation}{subsection}

\setcounter{tocdepth}1

\numberwithin{subsection}{section}

\allowdisplaybreaks[1]

%%%%%%%%%%%%%%%%%%%%%%%%%%%%%%%%%%%%%%%%%
%

\newtheorem*{namedtheorem}{\theoremname}
\newcommand{\theoremname}{testing}

\theoremstyle{plain}

\newtheorem{thm}{Theorem}[section]
\newtheorem{proposition}[thm]{Proposition}
\newtheorem{proposition-definition}[thm]{Proposition-Definition}
\newtheorem{lemma-definition}[thm]{Lemma-Definition}
\newtheorem{corollary}[thm]{Corollary}
\newtheorem{lemma}[thm]{Lemma}

\theoremstyle{definition}
\newtheorem{definition}[thm]{Definition}

\newtheorem{assumption}[thm]{Assumption}

\newtheorem{remark}[thm]{Remark}

\theoremstyle{remark}

\numberwithin{thm}{section}

%%%%%%%%%%%%%%%%%%%%%%%%%%%%%%%%%%%%%%%%%

%%%%%%%%%%%%%%%%%%%%%%%%%%%%%%%%%%%%%%%%

 %% evaluation map
 %% log cotangent complex
%\newcommand\LL{\mathbb{L}} %% usual cotangent complex

\newcommand\ocM{\overline{\mathcal{M}}}

\newcommand\ocN{\overline{\mathcal{N}}}

\newcommand\tS{\tilde{S}}

\newcommand\tC{\tilde{C}}

%%%%%%%%%%%%%%%%%%%%%%%

\newcommand\cA{\mathcal{A}}

\newcommand\cC{\mathcal{C}}

\newcommand\cG{\mathcal{G}}
\newcommand\cH{\mathcal{H}}

\newcommand\cM{\mathcal{M}}
\newcommand\cN{\mathcal{N}}
\newcommand\cO{\mathcal{O}}

\newcommand\cQ{\mathcal{Q}}
\newcommand\cR{\mathcal{R}}

\newcommand\cT{\mathcal{T}}
\newcommand\cU{\mathcal{U}}

\newcommand\uC{\underline{C}}

\newcommand\uG{\underline{G}}

\newcommand\uL{\underline{L}}
\newcommand\uM{\underline{M}}

\newcommand\uQ{\underline{Q}}
\newcommand\uR{\underline{R}}
\newcommand\uS{\underline{S}}
\newcommand\uT{\underline{T}}
\newcommand\uU{\underline{U}}

\newcommand\uX{\underline{X}}

\newcommand\uZ{\underline{Z}}

\newcommand\CC{\mathbb{C}}

\newcommand\NN{\mathbb{N}}

\newcommand\PP{\mathbb{P}}

\newcommand\RR{\mathbb{R}}

\newcommand\ZZ{\mathbb{Z}}

\newcommand\bC{\mathbf{C}}

\newcommand\bM{\mathbf{M}}

\newcommand\fC{\mathfrak{C}}

\newcommand\fM{\mathfrak{M}}

\newcommand\frq{\mathfrak{q}}

\newcommand\frt{\mathfrak{t}}

%%%%%%%%%%%%%%%%%%%%%%%%%%%%%%%%%%%%%%%%

  %%% base field

\newcommand\arr{\ifinner\to\else\longrightarrow\fi}

\def\displaytimes_#1{\mathrel{\mathop{\times}\limits_{#1}}}

\def\displayotimes_#1{\mathrel{\mathop{\bigotimes}\limits_{#1}}}

\newcommand\spec{\operatorname{Spec}}

\newcommand\id{\mathrm{id}}

\newcommand\GL{\operatorname{GL}}
\newcommand\SL{\operatorname{SL}}

%this defines a dash that will not break,
%but allows the next word to be hyphenated

\newdir{ >}{{}*!/-5pt/@{>}}

\newcommand\doublelong[2]{\mathbin{\xymatrix{{}\ar@<3pt>[r]^{#1}
\ar@<-3pt>[r]_{#2}&}}}

\newlength{\ignora}

%%%%%%%%%%%%%%%%%%%%%%%%%%%%%%%%%

%\newcommand{\red}{_{\mathrm{red}}}

\renewcommand{\setminus}{\smallsetminus}

\numberwithin{equation}{subsection}

%%%%%%%%%%%%%%%%%%%%%%%%%%%%%%%Notations for log scheme

\newcommand{\lM}{{M^{\dagger}}}

\newcommand{\lx}{x^{\dagger}}
\newcommand{\ly}{{y^{\dagger}}}

\newcommand{\ldelta}{{\delta^{\dagger}}}
\newcommand{\oh}{{\overline{h}}}

\newcommand{\tx}{\tilde{x}}
\newcommand{\ty}{\tilde{y}}
\newcommand{\tz}{\tilde{z}}

\newcommand{\tL}{\tilde{L}}
\newcommand{\tU}{\tilde{U}}

\newcommand{\tlx}{\tilde{x}^{\dagger}}
\newcommand{\tly}{\tilde{y}^{\dagger}}

\newcommand{\tpi}{\tilde{\pi}}

\newcommand{\upi}{\underline{\pi}}

\newcommand{\uPP}{\underline{\PP}}

\newcommand{\ueta}{\underline{\eta}}

\newcommand{\sqreta}{\sqrt{\eta}}

\newcommand{\lH}{{\cH}^{\dagger}}

\newcommand{\mH}{{\overline{\cH}}}

\newcommand{\oM}{{\overline{\bM}}}

\newcommand{\oC}{{\overline{\bC}}}

\newcommand{\ord}{\operatorname{ord}}

\newcommand{\Res}{\operatorname{Res}}

\newcommand{\Rpi}{\mathcal{R}\pi_{*}}

\newcommand{\Hyp}{\mathcal{H}yp}

%%%%%%%%%%%%%%%%%%%%%%%%%%%%%%%%%

\begin{document}

\title[Log twisted differentials]{Spin and hyperelliptic structures of log twisted differentials}

\author{Dawei Chen}

\author{Qile Chen}

\thanks{D. Chen is supported by the NSF CAREER grant DMS-1350396 and Q. Chen is supported by the NSF grant DMS-1560830.}

\address[Chen]{Department of Mathematics\\
Boston College\\
Chestnut Hill, MA 02467\\
USA}
\email{dawei.chen@bc.edu}

\address[Chen]{Department of Mathematics\\
Boston College\\
Chestnut Hill, MA 02467\\
USA}
\email{qile.chen@bc.edu}

\date{\today}

\begin{abstract}

Using stable log maps, we introduce log twisted differentials extending the notion of abelian differentials to the Deligne-Mumford boundary of stable curves. The moduli stack of log twisted differentials provides a compactification of the strata of abelian differentials. The open strata can have up to three connected components, due to spin and hyperelliptic structures. 
We prove that the spin parity can be distinguished on the boundary of the log compactification. Moreover, combining the techniques of log geometry and admissible covers, we introduce log twisted hyperelliptic differentials, and prove that their moduli stack provides a toroidal compactification of the hyperelliptic loci in the open strata.

\end{abstract}

\keywords{Abelian differential, log twisted differential, spin parity, hyperelliptic structure}
\subjclass[2010]{14H10 (primary), 14D23(secondary)}

\maketitle

\tableofcontents

%%%%%%%%%%%%%%%%%%%%%%%%%%%%%%%%%%%%%%%%%%%%%%%%%%%%%%%%%%%%%%%%%%%%%%%%%%%%%%%%%%%%%%%%%%%%%%%%%%%

%%%%%%%%%%%%%%%%%%
%%%%%%%%%%%%%%%%%%
\section{Introduction}\label{sec:intro}
%%%%%%%%%%%%%%%%%%
%%%%%%%%%%%%%%%%%%

Let $\mu = (m_1, \ldots, m_n)$ be a partition of $2g-2$. Consider the stratum $\cH(\mu)$ of abelian differentials with signature $\mu$ parameterizing pairs 
$(C, \omega)$, where $C$ is a smooth genus $g$ curve and $\omega$ is an abelian differential on $C$ whose zero type is specified by $\mu$. The study of $\cH(\mu)$ has broad connections to many fields. For example, an abelian differential defines a flat structure with conical singularities at its zeros. The behavior of geodesics under the flat metric is closely related to billiards in polygons, which has produced abundant results in dynamics and geometry. From the algebraic viewpoint, an abelian differential is determined, up to scaling, by its underlying canonical divisor. Hence the projections of these strata $\cH(\mu)$ to the moduli space of genus $g$ curves $\bM_g$ provide a series of special subvarieties according to properties of canonical divisors. Moreover, there is a $\GL^{+}(2, \RR)$-action on $\cH(\mu)$ induced by varying polygonal representations of abelian differentials. The closure of any 
$\GL^{+}(2, \RR)$-orbit (under the analytic topology on $\cH(\mu)$) is known to be algebraic by the seminal work of Eskin-Mirzakhani-Mohammadi (\cite{EM, EMM}) and Filip (\cite{Filip}). We refer to \cite{Zorich, MoellerTeich, Wright, Bootcamp} for surveys on abelian differentials and relevant topics. 

Just as many moduli problems, it is natural to pursue a functorial compactification for the stratum $\cH(\mu)$. Then one can obtain information for the interior of the stratum by analyzing the boundary of the compactification. This circle of ideas has been shown to be useful. For instance, dynamical invariants of Teichm\"uller curves (as dimensionally minimal $\GL^{+}(2, \RR)$-orbit closures) can be computed through their intersections with divisors of the Deligne-Mumford compactification $\oM_{g}$ (\cite{CMab, CMquad}). With this as a motivation, in this paper we consider a compactification of $\cH(\mu)$ via log geometry. 

\subsection{Moduli of log twisted sections}\label{ss:intro-log-twist-sec}
Our construction of the compactification is based on the theory of stable log maps developed in \cite{GS, Chen, AC}. We first treat a more general situation. Let $\uL$ be a line bundle of relative degree $d$ over a family of genus $g$ pre-stable curves $\upi: \uC \to \uM$ with $n$ markings $\sigma_1,\ldots, \sigma_n$, and $\mu = (m_1, \ldots, m_n)$ a partition of $d$. For each $\uS \to \uM$, consider a family of log curves $\pi_S: C_S \to S$  over the underlying curve $\uC_{\uS} \to \uS$ obtained by pulling back the family $ \uC \to \uM$. See Section \ref{ss:log-curve} for the definition of log curves. We have a projection of log schemes $\psi_S: L_{S} \to C_S$ such that  
\begin{enumerate}
 \item The underlying structure $\uL_S$ is the total space of the pull-back of $\uL$ via $\uC_{\uS} \to \uC$;
 \item The log structure $\cM_{L_S}$ on $L_S$ is given by the zero section of the line bundle $\uL_S$ together with the log structure from $C$, see Section \ref{ss:log-section}. 
\end{enumerate}
A {\em log twisted section with signature $\mu$ over the log scheme $S$} is a section $\eta: C_S \to L_S$ of the projection $\psi_S$ with contact orders at the marked points given by $\mu$. Note that the underlying morphism $\underline{\eta}$ is a section of the line bundle $\uL_S$ over $\uC_{\uS}$. Here the contact order is a generalization of the vanishing order of sections of the line bundle $\uL$. It remembers the vanishing order at the marked points, even when the underlying section $\underline{\eta}$ vanishes completely along the components of the curve containing the marked points, see (\ref{equ:marked-pt}).

Denote by $\lM(\mu)$ the category of log twisted sections fibered over the category of log schemes. Note that algebraic stacks are category fibered over schemes rather than log schemes. In order to prove the representability of $\lM(\mu)$ as a Deligne-Mumford stack, we use the key observation discovered in \cite{GS, Chen, AC} that one can view $\lM(\mu)$ as a fibered category of minimal objects over the category of schemes. We discuss the minimality of log twisted sections in Section \ref{ss:minimality} and show that
\begin{thm}[Theorem \ref{thm:log-twisted-sec}]
The fibered category $\lM(\mu)$ is represented by an algebraic stack with its minimal log structure which  parameterizes minimal log twisted sections. Furthermore, the forgetful morphism 
\[
\lM(\mu) \to \Rpi(\uL)
\]
is representable and finite, where $\Rpi(\uL)$ is the total space of the push-forward of $\uL$ along $\upi$, see Section \ref{ss:log-section}.
\end{thm}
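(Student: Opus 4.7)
The plan is to adapt the minimal-object framework of \cite{GS, Chen, AC} to sections of $\uL$. I would first record that the forgetful functor $\lM(\mu) \to \Rpi(\uL)$ sending $\eta$ to its underlying morphism $\underline{\eta}$ is well-defined; this is immediate from the construction of $\psi_S \colon L_S \to C_S$. The substantive task is to classify, for each geometric point $(\uS, \underline{\eta})$ of $\Rpi(\uL)$, all log enhancements of $\underline{\eta}$ with contact orders $\mu$, and to single out the minimal ones.

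For this I would carry out the local analysis near marked points and nodes. Using the description of $\cM_{L_S}$ in Section \ref{ss:log-section}, the contact order $m_i$ at $\sigma_i$ translates into a linear constraint in the characteristic monoid $\overline{\cM}_S$: when $\underline{\eta}$ is not identically zero near $\sigma_i$ the constraint merely records that $\underline{\eta}$ vanishes to order $m_i$ and contributes nothing; when $\underline{\eta}$ vanishes identically on the component containing $\sigma_i$, the constraint becomes a nontrivial relation among the node-smoothing generators of $\overline{\cM}_S$ along that component. Pushing out these relations across all marked points and nodes produces a canonical fine, saturated, sharp monoid $\overline{\cM}^{\min}_S$ with the expected universal property; this defines the minimal log structure, and by the general principle of loc.\ cit.\ every log twisted section over a log scheme is pulled back uniquely from a minimal one.

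With minimality in place, $\lM(\mu)$ becomes equivalent, as a category fibered in groupoids over schemes, to the category of pairs consisting of an underlying section together with a minimal log structure compatible with $\mu$. Stratifying $\Rpi(\uL)$ by the discrete datum that records on which irreducible components of $\uC$ the section $\underline{\eta}$ vanishes identically, each stratum is locally closed and carries a constant minimal monoid. Over each stratum, $\lM(\mu)$ is an explicit closed substack cut out by the vanishing-order conditions, together with the finite decoration of the dual graph by compatible contact orders. This realizes $\lM(\mu)$ as a Deligne-Mumford stack and exhibits $\lM(\mu) \to \Rpi(\uL)$ as representable and quasi-finite. Properness follows from semicontinuity of vanishing orders, which shows that the stratification is by closed subschemes inside each closure, yielding finiteness.

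I expect the main obstacle to be the combinatorial bookkeeping for the pushout defining $\overline{\cM}^{\min}_S$ when $\underline{\eta}$ vanishes along several adjacent components meeting at nodes: one must verify sharpness, saturatedness, and finite generation simultaneously, while checking that the contact orders balance across each such node so the signature $\mu$ is respected. This is the step that extends, rather than merely quotes, the minimality analysis for stable log maps, and is where the particular geometry of sections of a line bundle (as opposed to a general log target) enters most visibly.
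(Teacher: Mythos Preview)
Your plan is in the right spirit but misses the paper's central simplification, and as written leaves real work unaccounted for.

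The paper does not redo the minimality and representability analysis from scratch. Instead it observes (Lemma~\ref{lem:sec-map}) that a log twisted section $\eta\colon C_S\to L_S$ is equivalent to a stable log map $\eta'\colon C_S\to L'_S$, where $L'=(\uL,\cN'_L)$ carries only the divisorial log structure of the zero section; the $\cN_L$-factor of $\cM_L$ is forced to map by the identity since $\psi\circ\eta=\id$. Once this is said, everything follows by direct citation of \cite{GS,Chen,AC}: the minimal monoid $\ocM(G)$, its sharpness and saturation, openness of minimality, and the universal property are all special cases of the general theory. For representability the paper (Proposition~\ref{prop:representability}) simply takes the already-constructed algebraic stack $\fM(L')$ of stable log maps to $L'$, forms the fiber product with $M$ over $\fM_{g,n}$, and passes to the open substack where the underlying map is a section. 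Finiteness over $\Rpi(\uL)$ is then inherited from finiteness of stable log maps over the stack of underlying prestable maps.

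Your route, by contrast, proposes to reconstruct these results by hand: build $\ocM_S^{\min}$ via pushouts, stratify $\Rpi(\uL)$ by vanishing locus, and argue properness from semicontinuity. Two of these steps are genuinely underspecified. First, the claim that ``the finite decoration of the dual graph by compatible contact orders'' is finite is exactly the boundedness statement for stable log maps and is not a triviality: for curves with cycles in the dual graph, the balancing conditions alone leave an a priori infinite lattice of node contact orders, and one needs the positivity coming from the sharpness of $\ocM_S$ to cut this down. Second, ``properness follows from semicontinuity of vanishing orders'' is not a proof; properness of stable log maps is a valuative-criterion argument in \cite{GS,Chen,AC} that you would have to reproduce. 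The ``main obstacle'' you flag, the combinatorics of the minimal monoid, is precisely what the paper avoids by quoting, so your anticipated difficulty is self-inflicted. If you insist on the direct approach it can be made to work, but you should at least invoke Lemma~\ref{lem:sec-map} so that the cited references apply verbatim rather than by analogy.
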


\subsection{Log twisted differentials and their spin parity}\label{ss:intro-log-twisted-diff}
Consider the situation when $\uC \to \uM$ is given by the universal family of genus $g$, $n$-marked stable curves $\oC_{g,n} \to \oM_{g,n}$ with $\uL = \omega_{\oC_{g,n}/\oM_{g,n}}$ the (relative) dualizing line bundle. Let $\mu$ be a partition of $2g - 2$, which equals the degree of $\omega_{\oC_{g,n}/\oM_{g,n}}$. A {\em log twisted differential with signature $\mu$} is a log twisted section with signature $\mu$ with respect to the dualizing line bundle, see Section \ref{ss:log-twist-diff}. We emphasize that log twisted differential extends the notion of abelian differential functorially to the Deligne-Mumford boundary. It naturally associates to each irreducible component of a degenerate curve a (possibly meromorphic) differential up to a $\CC^*$-scaling controlled by the log structures, see Section \ref{ss:fiberwise-induced-diff}.

Denote by $\lH(\mu)$ the moduli stack of log twisted differentials with signature $\mu$. The open stratum $\cH(\mu) \subset \lH(\mu)$ is the locus parameterizing (not identically zero) abelian differentials over smooth curves. In particular, $\cH(\mu)$ can be identified with the open locus of $\lH(\mu)$ with the trivial log structure.

For a partition $\mu$ of $2g-2$ with even entries $m_i = 2k_i$ for all $i$, the half-canonical divisor $\sum_{i=1}^n k_i\sigma_i $ given by an abelian differential over a smooth curve defines a theta characteristic, called the {\em spin structure}. The parity $\dim H^0(\sum_{i=1}^n k_i\sigma_i ) \pmod{2}$ of the spin structure is a smooth deformation invariant (\cite{Atiyah, Mumford}). It follows that the loci of abelian differentials with odd and even spin structures in the stratum $\cH(\mu)$ are disjoint. Kontsevich and Zorich (\cite{KZ}) classified the connected components for all strata $\cH(\mu)$. It turns out that $\cH(\mu)$ can have up to three connected components, distinguished by spin and hyperelliptic structures. We next generalize the spin parity to log twisted differentials and prove the following result. 

\begin{thm}[Theorem~\ref{thm:spin}]
Consider the partition $\mu = (m_{i})_{i=1}^{n}$ with each $m_{i}$ even. Then we have a disjoint union 
\[
\lH(\mu) = \lH(\mu)^{+}\sqcup \lH(\mu)^{-},
\]
where $\lH(\mu)^{+}$ (resp. $\lH(\mu)^{-}$) is the open and closed substack of $\lH(\mu) $ parameterizing log twisted differentials with even (resp. odd) spin parity.
\end{thm}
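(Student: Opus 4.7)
The plan is to extend the classical spin parity from the smooth stratum $\cH(\mu)$ to all of $\lH(\mu)$ by constructing, over the universal family, a canonical theta characteristic on a suitable semistable modification, and then invoking Mumford's theorem on the deformation invariance of $\dim H^{0} \pmod{2}$.

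First, I would exploit the data carried by a log twisted differential: in each geometric fiber one has an underlying nodal curve $\uC$ with a fiberwise induced meromorphic differential $\omega_v$ on each irreducible component $C_v$ (Section~\ref{ss:fiberwise-induced-diff}), together with contact orders at all special points (markings and nodal branches) determined by the log structure. The hypothesis $m_i = 2k_i$ for all $i$, combined with the two-branch compatibility relations among contact orders at nodes forced by the minimality condition of Section~\ref{ss:minimality}, allows one to assign a consistent half-order at every special point. This yields a canonical effective half-canonical divisor on a functorially defined semistable modification $\widetilde{C}$ of $\uC$ obtained by inserting rational bridges at the nodes whose branch orders are not already both even.

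Next, following Cornalba's construction of theta characteristics on nodal curves, I would build a line bundle $\Theta$ on the universal modified curve $\widetilde{\cC} \to \lH(\mu)$ from this half-canonical divisor, with $\Theta^{\otimes 2} \cong \omega_{\widetilde{\cC}/\lH(\mu)}$ in each fiber. Its formation commutes with arbitrary base change, because both the semistable modification and the twisting at the exceptional bridges are dictated by the log data, and the log data is functorial by Theorem~\ref{thm:log-twisted-sec}. Over the open locus $\cH(\mu) \subset \lH(\mu)$, the modification $\widetilde{\cC}$ is the original curve and $\Theta$ restricts to the classical spin line bundle $\cO\bigl(\sum_{i} k_i \sigma_i\bigr)$.

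Finally, Mumford's theorem guarantees that for a flat family of proper semistable curves equipped with a square root of the dualizing sheaf, the parity $\dim H^{0}(\Theta|_s) \pmod{2}$ is locally constant in $s$. Setting
\[ \lH(\mu)^{\pm} \;=\; \bigl\{\, s \in \lH(\mu) \;:\; \dim H^{0}(\Theta|_s) \text{ is even (resp.\ odd)} \,\bigr\} \]
then produces the desired decomposition into open and closed substacks, whose restriction to $\cH(\mu)$ recovers the Atiyah--Mumford splitting. I expect the main obstacle to be the canonical construction of $\widetilde{\cC}$ and $\Theta$ across the log boundary: one must verify that the semistable modification and the twisting coefficients chosen at each node are intrinsic to the log twisted differential, depend functorially on it, and vary consistently as nodes get smoothed and the minimal log structure changes between adjacent strata. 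This reduces to a combinatorial computation on the characteristic monoids of the minimal log structures attached to log twisted differentials, after which the invocation of Mumford's theorem is standard.
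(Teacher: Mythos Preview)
Your overall architecture---modify the curve at the ``bad'' nodes, build a square root of the dualizing sheaf, invoke deformation invariance---parallels the paper's, with Cornalba's semistable modifications in place of the paper's Abramovich--Jarvis orbifold curves. That substitution is legitimate; the two frameworks for spin structures on stable curves are known to be equivalent.

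However, there is a genuine gap in how you construct $\Theta$. You propose to read off a ``half-canonical divisor'' from the contact orders and then take the associated line bundle. But on a nodal curve a square root of $\omega$ is \emph{not} determined by its degree (or its divisor class) on each irreducible component: there is residual gluing data at the nodes, and different gluings give genuinely different theta characteristics with, in general, different parities. The contact orders and the characteristic monoid $\ocM(G)$ record only the discrete combinatorics; they do not see this gluing. The paper's construction extracts the spin bundle from the full log structure, not from $\ocM$: the torsor $\cT = \frq^{-1}(\tfrac{1}{2}\bar{\delta}) \subset \cM_{\tilde{C}}$ lives in $\cM_{\tilde{C}}$ itself, and its isomorphism class depends on the actual log twisted differential $\eta$, not just on its weighted graph. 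The discussion in Section~\ref{ss:spin-flat} makes this explicit: two minimal log twisted differentials over the same underlying curve, with identical weighted graph $G$ and identical induced meromorphic differentials $\eta_Z^\beta$, can have opposite spin parities. So your claim that the construction ``reduces to a combinatorial computation on the characteristic monoids'' is precisely where the argument fails.

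To repair this you would have to explain how the log structure (not just the contact orders) singles out a specific gluing of the componentwise half-canonical bundles across each node. The paper does this via the torsor $\cT$; in your Cornalba language you would need to produce, from $\eta^\flat$, a canonical identification of the fibers of $\Theta$ at the two branches of each node. Once that is in hand, your appeal to deformation invariance (which on singular curves is the Cornalba/Jarvis statement rather than Mumford's smooth version) goes through as in the paper's Proposition~\ref{prop:spin-inv}.
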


\begin{remark}
We remark that the moduli space $\lH(\mu)$ of log twisted differentials maps onto the Farkas-Pandharipande moduli space of twisted canonical divisors (\cite{FP}), hence it may contain components that are entirely over the Deligne-Mumford boundary. Nevertheless, the spin parity defined above is not only for the main component of $\lH(\mu)$, but also for all boundary components that contain non-smoothable log twisted differentials. Therefore, $\lH(\mu)^{+}$ and $\lH(\mu)^{-}$ are in general reducible, consisting of other components besides the main (spin) components. In addition, depending on the spin parity, one of $\lH(\mu)^{+}$ and $\lH(\mu)^{-}$ contains the hyperelliptic component if it exists. 
\end{remark}

\subsection{The hyperelliptic structure}
An abelian differential $\eta$ over a smooth curve $C$ is called {\em hyperelliptic} if $C$ is hyperelliptic and $\iota^{*}\eta = - \eta$, where $\iota$ is the hyperelliptic involution. In order to remember the involution structure over the boundary, we combine the techniques of log geometry and admissible covers, and define {\em log twisted hyperelliptic differentials}, see Section \ref{ss:hyp-diff}. Given a partition $\mu$ of $2g - 2$ compatible with the hyperelliptic involution, we introduce the category of log twisted hyperelliptic differentials fibered over the category of log schemes, denoted by $\Hyp(\mu)$, and prove that

\begin{thm}[Theorem \ref{thm:hyp}]
The fibered category $\Hyp(\mu)$ is represented by a separated, log smooth Deligne-Mumford stack with its universal minimal log structure. Furthermore, the forgetful morphism to the Hodge bundle $\Hyp(\mu) \to \cH_{g,n}$ is representable and finite.
\end{thm}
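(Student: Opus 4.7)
The strategy is to realize $\Hyp(\mu)$ as a fiber product combining the stack $\lH(\mu)$ of log twisted differentials from Theorem~\ref{thm:log-twisted-sec} with a moduli stack of admissible double covers, cut out by the anti-invariance equation $\iota^{*}\eta=-\eta$. Representability, finiteness and separatedness will be inherited from the two factors, while log smoothness---which requires a detailed understanding of the minimal log structure in the presence of the involution---will be the main technical obstacle.

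First, applying Theorem~\ref{thm:log-twisted-sec} to the universal stable curve $\oC_{g,n}\to\oM_{g,n}$ with $\uL=\omega_{\oC_{g,n}/\oM_{g,n}}$ realizes $\lH(\mu)$ as a DM stack equipped with its universal minimal log structure, together with a representable finite forgetful morphism $\lH(\mu)\to\cH_{g,n}$, since $\Rpi\omega$ is the Hodge bundle. Next I would consider the moduli stack $\AH$ of admissible double covers of $n$-pointed stable curves whose ramification pattern is compatible with the parities of the $m_i$; classically $\AH$ is a separated DM stack and $\AH\to\oM_{g,n}$ is representable and finite onto the hyperelliptic locus. Following Section~\ref{ss:hyp-diff}, $\Hyp(\mu)$ is then obtained as the closed substack of the $2$-fibered product $\lH(\mu)\times_{\oM_{g,n}}\AH$ on which the anti-invariance $\iota^{*}\eta+\eta=0$ holds; at the level of underlying sections this is the vanishing of a section of the pulled-back Hodge bundle, which is closed, and the log-level matching is likewise closed.

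Representability and finiteness of $\Hyp(\mu)\to\cH_{g,n}$ then follow by combining the properties of the two factors: $\lH(\mu)\to\cH_{g,n}$ is representable and finite by Theorem~\ref{thm:log-twisted-sec}, $\AH\to\oM_{g,n}$ is representable and finite over the hyperelliptic locus, and closed substacks of finite stacks remain finite. Separatedness is immediate from the separatedness of $\cH_{g,n}$ and of $\AH$, together with the uniqueness of the hyperelliptic involution on a fixed hyperelliptic curve.

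The main obstacle is verifying log smoothness of $\Hyp(\mu)$ with its universal minimal log structure. My approach is to produce, at each boundary point, an \'etale chart whose characteristic monoid is fine, saturated and free. By the minimality analysis of Section~\ref{ss:minimality}, for $\lH(\mu)$ such a chart is generated by the node-smoothing parameters of the dual graph $G$ subject to the edge equations encoding the contact orders of $\eta$ across each node. I would then argue that imposing the anti-invariance under the induced involution $\iota_G$ on $G$ collapses each $\iota_G$-orbit of edges to a single smoothing parameter and identifies the corresponding edge equations, leaving a sub-monoid that is again fine, saturated and free, hence log smooth. The delicate case is that of nodes fixed by $\iota$ with the two branches swapped, where the invariant subcone has smaller dimension; here I would use the explicit local description of admissible double covers near such nodes from Section~\ref{ss:hyp-diff} to check that the subcone remains smooth. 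This case analysis, bridging the admissible-cover description with the tropical picture of the minimal log structure, is where I expect the real technical work to concentrate.
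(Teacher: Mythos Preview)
Your construction of $\Hyp(\mu)$ as a closed substack of a fiber product is close in spirit to the paper's Proposition~\ref{prop:hyp-algebracity}, though the paper takes the fiber product in the log category, namely $M' = M\times_{\cA^{\sharp}}\cA$, precisely so that conjugate smoothing parameters get identified in the characteristic monoid; your fiber product over $\oM_{g,n}$ of underlying stacks does not by itself produce the correct minimal log structure $\ocM(G,\iota)$. This is fixable, and your finiteness and separatedness sketch is essentially the same as the paper's Propositions~\ref{prop:hyp-algebracity} and~\ref{prop:hyp-prop}.

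The real gap is in your log smoothness argument. Showing that the characteristic monoid at each geometric point is free is \emph{not} sufficient for log smoothness: log smoothness over a trivial log point requires the pair to be toroidal, which entails both local freeness of the log structure \emph{and} smoothness of the underlying stack. Your combinatorial collapse of $\iota_G$-orbits addresses only the first part. Indeed $\lH(\mu)$ itself is not log smooth in general (it has boundary components of the wrong dimension, as noted after the statement of Theorem~\ref{thm:spin}), so passing to a closed substack cut out by $\iota^*\eta+\eta=0$ cannot become log smooth by monoid considerations alone; you still owe an unobstructedness argument for the underlying deformations.

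The paper's route is genuinely different and bypasses this obstruction analysis. It observes that $\eta\mapsto\eta^{\otimes 2}$ descends along the admissible double cover to a log twisted \emph{quadratic} differential on the genus-zero quotient $\PP$, giving a morphism $\Hyp(\mu)\to\cQ$ to a moduli of log twisted sections over $\oM_{0,n}$. Over rational curves the relative log tangent bundle is trivial and $H^1(\cO)=0$, so $\cQ$ is log smooth (Proposition~\ref{prop:qudratic-smooth}); then $\Hyp(\mu)\to\cQ$ is checked to be log \'etale via the infinitesimal lifting criterion by taking the unique anti-invariant square root of the quadratic differential (Proposition~\ref{prop:hyp-smooth}). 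The passage to genus zero is the key idea you are missing: it replaces your attempted case analysis of $\iota$-fixed nodes by a clean vanishing of obstructions.
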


\begin{remark}
For the convenience of the construction, in the hyperelliptic case we mark all the involution fixed points, hence zero contact orders are allowed in the partition $\mu$, see Section \ref{ss:hyp-moduli}. 
 \end{remark}

\begin{remark} 
The compactification $\Hyp(\mu)$ treats not only the hyperelliptic components of $\cH(\mu)$, but also the hyperelliptic loci in $\cH(\mu)$ for any partition $\mu$ compatible with $\iota$. The log smoothness of $\Hyp(\mu)$ is equivalent to the statement that the boundary of $\Hyp(\mu)$ is toroidal. In particular, all log twisted hyperelliptic differentials are smoothable. 
\end{remark}

%%%%%%%%%%%%%%%%%%%%%%%
\subsection{Related works and comparison}\label{ss:related}
%%%%%%%%%%%%%%%%%%%%%%%

There are several recent attempts to study degeneration of abelian differentials from the classical viewpoint without using log structures, i.e., compactifying the strata in the Hodge bundle over $\oM_{g,n}$ (resp. in $\oM_{g,n}$) using twisted differentials (resp. twisted canonical divisors). Gendron (\cite{Gendron}) proved the smoothability of twisted differentials when they have zero residues at all nodes. The first author (\cite{ChenDiff}) studied twisted canonical divisors on curves of pseudo-compact type. Farkas and Pandharipande (\cite{FP}) studied systematically the space of twisted canonical divisors and showed that it is in general reducible, which contains, besides the closure of the stratum, a number of boundary components that have dimension one less than the dimension of the stratum. Finally in \cite{BCGGM1} a crucial global residue condition was found and used to isolate the closure of the stratum. As mentioned before, the moduli space $\lH(\mu)$ of log twisted differentials maps onto the Farkas-Pandharipande space of twisted canonical divisors, hence it also contains extra boundary components. Nevertheless, there is a way to implant the global residue condition under the log setting to isolate the main component, which will be addressed in our future work. 

Without log structures, the odd and even spin components can intersect in the boundary of the aforementioned compactifications. This phenomenon has been already observed in \cite{EMZ, Gendron, ChenDiff}. 

We mention that the log twisted differential considered in this paper is closely related to the strata compactification studied by Gu\'{e}r\'{e} (\cite{Guere}), whose construction also relies on the theory of stable log maps, but is motivated from the viewpoint of Gromov-Witten theory. 

If one replaces the canonical line bundle by its $k$-th power, then it is natural to consider the strata of $k$-differentials. Setting $L$ to be the $k$-th power of the relative canonical line bundle in Theorem~\ref{thm:log-twisted-sec}, our result gives also a log compactification for the strata of $k$-differentials parameterizing log twisted $k$-differentials (similarly see \cite{Guere}). The space of twisted $k$-differentials is in general reducible and a modified global residue condition via canonical $k$-covers can isolate the main component (\cite{BCGGM3}). 

%%%%%%%%%%%%%%%%%%%%%%%%
\subsection{Conventions and notations}
%%%%%%%%%%%%%%%%%%%%%%%%

We use capital letters such as $C$, $S$, and $M$ to denote log schemes or log stacks. Their underlying schemes or stacks are denoted by $\uC$, $\uS$, and $\uM$ respectively. Given a log structure $\cM_{X}$ over a scheme $\uX$, denote by $\ocM_{X} := \cM_{X}/\cO^*$ the {\em characteristic sheaf} of $\cM_{X}$. All log structures considered here are fine and saturated.  We refer to \cite{KKato, log-survey} for the basics of log geometry. 

This paper is heavily built on the theory of stable log maps, and we refer to \cite{AC, Chen, GS} for general developments of stable log maps. Throughout this paper, we work over an algebraically closed field of characteristic zero. 

%%%%%%%%%%%%%%%%%%
\subsection*{Acknowledgements} 
%%%%%%%%%%%%%%%%%%

The authors thank Dan Abramovich, Matt Bainbridge, Gavril Farkas, Quentin Gendron, Samuel Grushevsky, J\'{e}r\'{e}my Gu\'{e}r\'{e}, Martin M\"oller, Rahul Pandharipande and Jonathan Wise for stimulating discussions on related topics. 

%%%%%%%%%%%%%%%%%%%%%%%Log twisted sections
%%%%%%%%%%%%%%%%%%%%%%%%
\section{Log twisted sections}\label{sec:log-section}
%%%%%%%%%%%%%%%%%%%%%%%%
%%%%%%%%%%%%%%%%%%%%%%%%

In this section, we introduce the general set-up of log twisted sections of a given line bundle, and study their moduli stacks. 

%%%%%%%%%%%%%%%%%%%%%%%%
\subsection{Log curves}\label{ss:log-curve}
%%%%%%%%%%%%%%%%%%%%%%%%
First recall the canonical log structure on pre-stable curves (\cite{FKato, LogCurve}). 
Denote by $\underline{\fC}_{g,n} \to \underline{\fM}_{g,n}$ the universal family of pre-stable curves. Note that the boundary $\Delta_{\fM} \subset \underline{\fM}_{g,n}$ parameterizing singular underlying curves is a normal crossings divisor in $\underline{\fM}_{g,n}$. Thus we denote by $\fM_{g,n}$ the log stack, with the underlying structure given by $\underline{\fM}_{g,n}$ and the log structure $\cM_{\fM_{g,n}}$ given by the divisorial log structure associated to $\Delta_{\fM}$, see \cite[(1.5)]{KKato}

Similarly, we consider the boundary $\Delta_{\fC} \subset \underline{\fC}_{g,n}$ consisting of singular fibers with $n$ markings. The boundary $\Delta_{\fC}$ is again a normal crossings divisor in $\underline{\fC}_{g,n}$. Denote by $\fC_{g,n}$ the log stack with underlying structure $\underline{\fC}_{g,n}$, and by $\cM_{\fC_{g,n}}$ the log structure of $\fC_{g,n}$ given by the divisorial log structure associated to the boundary $\Delta_{\fC}$. Since the morphism of the pairs 
\[
(\underline{\fC}_{g,n}, \Delta_{\fC}) \to (\underline{\fM}_{g,n}, \Delta_{\fM})
\]
is toroidal, it induces a morphism of log stacks
\[
\fC_{g,n} \to \fM_{g,n}.
\]

Now consider a family of genus $g$, $n$-marked pre-stable curves $\uC \to \uS$. Such a family is obtained from the following cartesian diagram 
\[
\xymatrix{
\uC \ar[r] \ar[d] & \underline{\fC}_{g,n} \ar[d] \\
\uS \ar[r] & \underline{\fM}_{g,n}
}
\]
via a unique morphism $\uS \to \underline{\fM}_{g,n}$ determined by the underlying family $\uC \to \uS$. Pulling back the log structures on $\fC_{g,n}$ and $\fM_{g,n}$, we obtain a family of log schemes 
$$C^{\sharp} \to S^{\sharp}.$$ 
Such a family $C^{\sharp} \to S^{\sharp}$ is called the {\em basic log curve} associated to the underlying family $\uC \to \uS$. By \cite{FKato, LogCurve}, the log structure of the basic log curve is uniquely determined by the underlying family $\uC \to \uS$, up to isomorphism.  

\begin{definition}\label{def:log-curve}
A {\em genus $g$, $n$-marked log curve} over a log scheme (or a log stack) $S$ is a family $C \to S$ such that
\begin{enumerate}
 \item The underlying family $\uC \to \uS$ is a family of genus $g$, $n$-marked pre-stable curves;
 \item There is a morphism $S \to S^{\sharp}$ with the identity underlying morphism that fits in a cartesian diagram of log schemes:
 \[
 \xymatrix{
 C \ar[r] \ar[d] & C^{\sharp} \ar[d] \\
 S \ar[r]^{h} & S^{\sharp} \\
 }
 \]
 where $C^{\sharp} \to S^{\sharp}$ is the basic log curve associated to the underlying family $\uC \to \uS$. 
\end{enumerate} 
\end{definition}

We next recall the local description of a log curve over a geometric point. Given a family of log curves $\pi: C \to S$ with $\uS$ a geometric point. We denote by $\sigma_{i}: \uS \to \uC$ the $i$-th marked point. For ease of notation, we also write $\sigma_{i}$ for its image in $\uC$, and apply the same philosophy to similar notations later on. Note that there are three cases for points in a marked nodal curve: smooth unmarked points, markings, and nodes. 

\vspace{2mm}

\noindent
{\bf Smooth unmarked points.}
Let $p \in C$ be a smooth unmarked point. Then we have
\[
\cM_{C,p} = (\pi^*\cM_{S})_{p}.
\]

\vspace{2mm}

\noindent
{\bf Marked points.}
For the markings, denote by $\cM_{i}$ the divisorial log structure on $\uC$ associated to the divisor $\sigma_{i}$. We thus have the fiber
\[
\cM_{C,\sigma_{i}} = (\pi^*\cM_{S}\oplus_{\cO^*_{C}}\cM_{i})_{p}.
\]
In particular, the fiber of the characteristic sheaf at $\sigma_i$ is
\[
\ocM_{C,\sigma_{i}} = \pi^*\ocM_{S}\oplus \NN.
\]

\vspace{2mm}

\noindent
{\bf Nodes.}
We recall that $\ocM_{S^{\sharp}} = \NN^{k}$ where $k$ is the number of nodes of $\uC$. Indeed, let $\uG$ be the dual graph of $\uC$, where vertices of $\uG$ correspond to the irreducible components of $\uC$, edges of $\uG$ correspond to the nodes, and rays correspond to the markings. The generators of $\ocM_{S^{\sharp}}$ can be labeled by the set of edges $E(\uG)$. For each edge $l \in E(\uG)$, denote by $e_{l} \in \ocM_{S^{\sharp}}$ the corresponding generator. Suppose $l$ represents the node $z$. Then we will also set $e_z = e_l$ and use them interchangeably. Let $x$ and $y$ be local coordinates of the two components meeting at the node $z$. Then we can lift the two functions $x$ and $y$ to two sections $\lx$ and $\ly$ in $\cM_{C^{\sharp}}$ \'etale locally near $z$. By a careful choice of the chart $\beta^{\sharp}: \ocM_{S^{\sharp}} \to \cM_{S^{\sharp}} $ and the coordinates $x$ and $y$, we have the following local equation
\begin{equation}\label{equ:node-smoothing}
\beta^{\sharp}(e_{l}) = \lx + \ly\quad \mbox{in} \ \cM_{C^{\sharp}}. 
\end{equation}
We have the fiber 
\[
\ocM_{C^{\sharp},l} = \NN^{k-1}\oplus\NN^{2},
\]
where the first summand $\NN^{k-1}$ is generated by $\{e_{l'}\}$ for edges $l' \neq l$, and the second summand $\NN^{2}$ is generated by the images of $\lx$ and $\ly$. 
 
We denote again by $\lx$ and $\ly$ the pull-backs of the corresponding local sections over $C$. Thus locally near $l$, we have 
\[
\oh^{\flat}\circ \beta^{\sharp}(e_{l}) = \lx + \ly\quad \mbox{in} \ \ocM_{C}, 
\]
where $\oh^{\flat}$ is the morphism on the level of characteristic sheaves induced by the map $h$ in Definition~\ref{def:log-curve}. We have the fiber
\[
\ocM_{C,l} = \pi^*\ocM_{S}\oplus_{\ocM_{S^{\sharp}}}\NN^{2},
\]
where the second summand $\NN^{2}$ is generated by the images of $\lx$ and $\ly$. For later use, we will identify $e_{l}$ (or $e_z$) with its image in $\ocM_{S}$, and call it the {\em smoothing element} associated to the edge $l$ (or the node $z$). 

%%%%%%%%%%%%%%%%%%%%%%
\subsection{Log twisted sections}\label{ss:log-section}
%%%%%%%%%%%%%%%%%%%%%%
Here we study log twisted sections using stable log maps developed in (\cite{GS, Chen, AC}). 
Throughout the rest of this section, we consider the following situation: 
\begin{enumerate}
 \item $\upi: \uC \to \uM$ is a family of genus $g$, $n$-marked pre-stable curves over an algebraic stack $\uM$ with the $n$ markings $ \{\sigma_{i}\}_{i=1}^{n}$;
 \item $\uL$ is a line bundle of degree $d$ over the family of curves $\uC$. 
\end{enumerate}
For any morphism $\uS \to \uM$, denote by $(\uL_{\uS}, \uC_{\uS})$ the family obtained by pulling back $(\uL, \uC)$.

Consider the stack $\Rpi(\uL)$ over $\uM$ that associates to each morphism $\uS \to \uM$ the set of global sections $H^0(\uL_{\uS})$. The stack $\Rpi(\uL)$ is algebraic with a separated and representable morphism $\Rpi(\uL) \to \uM$ (\cite{CL}).

We again write $\pi: C \to M$ for the basic log curve associated to the underlying family $\upi: \uC \to \uM$ with markings $\{\sigma_{i}\}_{i=1}^{n}$. Let $\underline{\psi}: \uL \to \uC$ be the projection. Let $\cN'_{L}$ be the divisorial log structure on the total space of $\uL$ associated to the zero section of $\uL$, and $\cN_{L} = \psi^*\cM_{C}$ the log structure on $\uL$. Define 
\begin{equation}\label{eq:log-lift} 
\cM_{L} := \cN'_{L}\oplus_{\cO^*}\cN_{L}, 
\end{equation}
and denote by $L = (\uL, \cM_{L})$ the log stack. We thus obtain a log smooth morphism
\[
\psi: L \to C
\]
whose underlying morphism is given by $\underline{\psi}$. 

Consider a partition
\[
\mu = (m_{1},\cdots,m_{n})
\]
such that $m_{i} \in \NN$ and $\sum_{i=1}^{n} m_i = d$. Here we allow $m_i = 0$. Our motivation is to compactify the stratum of global sections of $\uL$ that have a zero of order $m_i$ at each $\sigma_i$. Now we formally define such sections under the log geometric setting. 

\begin{definition}\label{def:log-twisted-sec}
A family of {\em log twisted sections with contact orders $\mu$ associated to the family $\uL \to \uC \to \uM$} over a log scheme $S$ consists of the following data: 
\begin{equation*}%\label{equ:log-twisted-sec}
(\pi_{S}: C_{S} \to S, \ \eta: C_{S} \to L_{S})
\end{equation*}
where 
\begin{enumerate}
 \item $\pi_{S}: C_{S} \to S$ is the family of log curves obtained by pulling back the family $C \to M$ via the map $S \to M$;
 \item The composition $\psi\circ\eta$ is the identity map of the log curve $C_{S}$. In particular, the underlying map $\ueta$ is a global section of the line bundle $\uL_{S}$ over $\uC_{S}$; 
 \item The contact order at the $i$-th marking is $m_{i}$ for $i=1, \cdots, n$ (see Section~\ref{ss:local-structure} for the precise definition of contact order). 
\end{enumerate}
\end{definition}
For simplicity, we will use log twisted section without mentioning the line bundle $\uL$ and the partition $\mu$ when there is no confusion. Sometimes we also refer to $\eta$ as a log twisted section if the family of log curves $C_{S} \to S$ is clear in the context.  

Denote by $L' = (\uL, \cN'_{L})$ with the divisorial log structure associated to the zero section of $\uL$. Then we have the following observation that every log twisted section $\eta$ is automatically a stable log map to $L'$. 

\begin{lemma}\label{lem:sec-map}
A log twisted section $\eta$ is equivalent to a stable log map $\eta': C_{S} \to L'_{S}$ with the underlying morphism given by a section $\ueta \in H^0(\uL)$. We call $\eta'$ the {\em stable log map associated to $\eta$}. 
\end{lemma}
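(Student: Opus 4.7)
The plan is to unpack the pushout structure of $\cM_L = \cN'_L \oplus_{\cO^*} \cN_L$ from \eqref{eq:log-lift} and observe that a log morphism $\eta : C_S \to L_S$ satisfying $\psi \circ \eta = \id_{C_S}$ decomposes canonically into a log morphism $\eta' : C_S \to L'_S$ together with the tautological identification coming from the summand $\cN_L = \psi^*\cM_C$.

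First I would pull back the defining pushout to $C_S$ via the underlying section $\ueta$. By the universal property of pushouts of log structures, giving a log lift $\eta$ of $\ueta$ to $L_S$ is equivalent to giving a compatible pair: a morphism $\ueta^*\cN'_L \to \cM_{C_S}$ and a morphism $\ueta^*\cN_L \to \cM_{C_S}$, agreeing on the preimage of $\cO^*$. The condition $\psi \circ \eta = \id_{C_S}$ forces the second of these to be the canonical isomorphism $\ueta^*\psi^*\cM_C \xrightarrow{\cong} \cM_{C_S}$, so that the only genuine datum left is a morphism $\ueta^*\cN'_L \to \cM_{C_S}$ lifting $\ueta$. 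This is exactly the data of a log morphism $\eta' : C_S \to L'_S$ whose underlying morphism is $\ueta$. The reverse direction is immediate: given such an $\eta'$, combine it with the identity on $\cM_{C_S}$ via the pushout to recover $\eta$.

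Next I would check that the numerical invariants match. Since $\cN'_L$ is the divisorial log structure associated to the zero section of $\uL$, the contact order of $\eta$ at a marking $\sigma_i$ (in the sense of Section~\ref{ss:local-structure}) is determined by the image of the generator of $\ocN'_L$ in $\ocM_{C_S,\sigma_i}$, and this coincides tautologically with the contact order of $\eta'$. Compatibility at nodes and at smooth unmarked points is automatic from the pushout decomposition, since the $\cN_L$-part is forced to be the identity and contributes no additional contact data. Hence the signature $\mu$ is preserved under the equivalence.

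Finally I would confirm the stability of $\eta'$: because $\ueta$ is a section of the smooth projection $\uL_S \to \uC_S$, the map $\ueta$ is a closed embedding, which rules out nontrivial automorphisms over the target and so gives stability (in the stable log maps sense of \cite{AC,Chen,GS}) without any contraction. The main obstacle I anticipate is notational rather than conceptual: one must carefully track the pushout at markings, where $\ocM_{C_S,\sigma_i}$ already carries the extra $\NN$ summand from $\cM_i$, and at nodes, where the smoothing relation \eqref{equ:node-smoothing} interacts with the $\cN'_L$ factor. Once the local structure is handled componentwise, the equivalence is essentially formal.
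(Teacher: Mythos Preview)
Your proposal is correct and follows essentially the same approach as the paper: the paper's proof is the one-sentence observation that $\eta^{\flat}: \cN'_{L_{S}}\oplus_{\cO^*}\cN_{L_{S}} \to \cN_{L_{S}}$ restricts to an isomorphism on the $\cN_{L_S}$ summand because $\eta$ is a section of $\psi_S$, which is exactly your pushout decomposition argument stated more tersely. Your additional checks on contact orders and stability are reasonable elaborations that the paper leaves implicit.
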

\begin{proof}
On the level of log structures, the morphism $\eta^{\flat}: \cN'_{L_{S}}\oplus_{\cO^*}\cN_{L_{S}} \to \cN_{L_{S}}$ induces an isomorphism on the component $\cN_{L_{S}} $ as it is a section of $\psi_{S}$. 
\end{proof}

We denote by $\lM(\mu)$ the category of log twisted sections fibered over the category of log schemes. The main result of this section is 

\begin{thm}\label{thm:log-twisted-sec}
The fibered category $\lM(\mu)$ is represented by an algebraic stack with the minimal log structure. Furthermore, the morphism 
\[
\lM(\mu) \to \Rpi(\uL)
\]
is representable and finite.
\end{thm}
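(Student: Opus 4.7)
The plan is to deduce the theorem from the general algebraicity results for stable log maps via the equivalence of Lemma~\ref{lem:sec-map}. Applying that lemma, I identify $\lM(\mu)$ with the subcategory of basic stable log maps $\eta'\colon C_{S} \to L'_{S}$ of the prescribed discrete type $\mu$ whose underlying map $\underline{\eta}$ is a section of $\underline{\psi}\colon \uL \to \uC$. The ambient moduli stack of basic stable log maps to $L'$ with discrete data $\mu$ is algebraic and carries a universal minimal log structure by \cite{AC, Chen, GS}. The equation $\underline{\psi}\circ\underline{\eta} = \mathrm{id}_{\uC_{S}}$ cuts out a closed substack at the underlying level, and this condition is visibly compatible with the minimal log structure, hence $\lM(\mu)$ inherits algebraicity together with its minimal log structure.

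For the forgetful morphism $\lM(\mu) \to \Rpi(\uL)$, I fix a test pair $(\uC_{\uS} \to \uS, \underline{\eta})$ and classify all log enhancements. Away from the nodes, the log structure on $S$ and the log curve structure determine everything, and the contact order at each marking $\sigma_{i}$ equals $m_{i}$ precisely when the corresponding numerical/combinatorial condition on $\underline{\eta}$ holds, imposing at worst a closed condition on the section. The essential local computation is at the nodes of $\uC_{\uS}$. At a node $z$ with local coordinates lifted to $\lx$ and $\ly$ satisfying \eqref{equ:node-smoothing}, the requirement that $\eta$ be a stable log map to $L'_{S}$ forces a matching relation on the level of characteristic monoids between the vanishing orders of $\underline{\eta}$ on the two branches of $z$ and the smoothing element $e_{z} \in \ocM_{S}$. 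By minimality, as discussed in Section~\ref{ss:minimality}, this equation has only finitely many solutions in a fine and saturated characteristic monoid, and each solution corresponds to a unique minimal log enhancement at $z$.

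Collecting the finite sets of local choices over all nodes of $\uC_{\uS}$ produces a finite set of minimal log enhancements of $(\uC_{\uS}, \underline{\eta})$ into a log twisted section, and the universal property of minimal log structures globalizes these local data into a minimal log structure on $\uS$ for each global choice. Representability of the fiber then follows because no additional automorphisms of the log structure are introduced by the section condition, and finiteness follows because the finite local fibers at each node combine into a finite global fiber. This simultaneously yields the representability and the finiteness of $\lM(\mu) \to \Rpi(\uL)$.

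The step I expect to be the main obstacle is the analysis at nodes where $\underline{\eta}$ vanishes identically along one (or both) branches: in this regime the ordinary vanishing order degenerates, and the contact order must be recovered combinatorially from the minimal log structure rather than numerically from $\underline{\eta}$. One therefore has to verify that the resulting combinatorial equations on the dual graph of $\uC_{\uS}$ still admit only finitely many minimal solutions, and that these solutions glue coherently across nodes even when several adjacent components of $\uC_{\uS}$ lie in the vanishing locus of $\underline{\eta}$. Once this combinatorial finiteness is established in Section~\ref{ss:minimality}, both statements of the theorem follow from the packaging above.
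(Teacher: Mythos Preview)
Your algebraicity argument is the paper's: both identify $\lM(\mu)$ via Lemma~\ref{lem:sec-map} with a locus inside the moduli of basic stable log maps to $L'$ and invoke \cite{GS, Chen, AC}. The paper is slightly more careful, first forming the fiber product $\fM(L')\times_{\fM_{g,n}}M$ to pin the source curve to the given family and then observing that the section condition cuts out an \emph{open} (not closed, as you write) substack there; but either way algebraicity follows, so this part is fine.

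For finiteness the approaches diverge, and yours has a genuine gap. The paper does not argue combinatorially at all: it simply invokes the finiteness of the forgetful morphism from minimal stable log maps to underlying stable maps, a substantial result of \cite{GS, Chen, AC} proved via the valuative criterion together with boundedness of contact orders. Your direct count appeals to Section~\ref{ss:minimality} for the claim that ``this equation has only finitely many solutions,'' but that section does no such thing: it constructs the minimal monoid $\ocM(G)$ \emph{once the weighted graph $G$ is given}, and says nothing about how many weighted graphs $G$ can lie over a fixed underlying section $\underline{\eta}$. In the case you yourself flag---$\underline{\eta}$ vanishing on adjacent components---the contact order $c_l$ at a node between two degenerate components is invisible from $\underline{\eta}$, and bounding it requires a global degree argument across the dual graph. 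Moreover, even after fixing $G$, the log enhancement carries continuous data (the induced nowhere-zero sections on degenerate components, cf.\ \eqref{equ:general-pt}--\eqref{equ:sec-pole}) not determined by $\underline{\eta}$, and one must check that these are absorbed by isomorphisms of the minimal log structure rather than producing positive-dimensional fibers. None of this is supplied by Section~\ref{ss:minimality}; it is precisely what the cited finiteness theorem for stable log maps packages, and the paper is right to import it rather than redo it.
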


The finiteness of the theorem follows from the same property of stable log maps, see \cite{GS, Chen, AC}. We will justify the minimal log structure in Section~\ref{ss:minimality}, and the representability of $\lM(\mu)$ will follow from Proposition~\ref{prop:representability}. 

%%%%%%%%%%%%%%%%%%%%%%%%
\subsection{Local structure of log twisted sections}\label{ss:local-structure}
%%%%%%%%%%%%%%%%%%%%%%%%

Let $\eta$ be a log twisted section over a log curve $\pi_{S}: C_{S} \to S$. Denote by $\eta'$ the stable log map associated to $\eta$. Shrinking $S$ if necessary, there exists a chart 
\[
\beta: \ocM_{S,s} \to \cM_{S}
\]
for some point $s \in S$. For an element $e \in \ocM_{S,s}$, we identify $e$ with its image $\beta(e)$ for the above choice of chart.

Consider the case when $\uS$ is a geometric point. Then the chart $\beta$ becomes a section of the natural quotient morphism
\[
\cM_{S} \to \ocM_{S} := \cM_{S}/\cO^*_{S}.
\]
We want to analyze three cases: irreducible components of $C$, markings, and nodes. 

\vspace{2mm}
\noindent
{\bf Irreducible components.}
Let $Z$ be an irreducible component of $C$. Locally near a smooth unmarked point $z \in Z$, we have the morphism on the level of log structures
\begin{equation}\label{equ:general-pt}
(\eta')^{\flat}: \eta^*\cN'_{L_{S},z} \to \cM_{C, z} := \pi_{S}^*\cM_{S}, \quad \ldelta \mapsto e_{Z} + u
\end{equation}
where $\ldelta$ is a local generator of $\cN'_{L_{S},z}$ near $\eta(z)$, $e_{Z}$ is the section $\beta(e_{Z}) \in \cM_{S}$ by a slight abuse of notation, and $u \in \cO_{C}$ is an invertible regular function in a neighborhood of $z$. Note that if $e_{Z} \neq 0$ in $\ocM_{S}$, then the underlying section $\underline{\eta}$ vanishes entirely on the component $Z$. Hence we call $e_{Z} \in \ocM_{S}$ the {\em degeneracy} of the irreducible component $Z$. 

\vspace{2mm}
\noindent
{\bf Marked points.}
For a marking $z = \sigma_{i} \in Z$, we have the morphism on the level of log structures
\begin{equation}\label{equ:marked-pt}
(\eta')^{\flat}: \eta^*\cN'_{L_{S},z} \to \cM_{C_{S},z} := \pi_{S}^*\cM_{S}\oplus_{\cO^*}\cN_{\sigma_i}, \quad \ldelta \mapsto e_Z + m_i \cdot \delta_i^{\dagger} + u
\end{equation}
where $e_Z$ is the degeneracy of $Z$ defined above, $\delta_i^{\dagger} \in \cM_{C_{S}}$ is the lifting of the local defining equation $\delta_{i}$ of $\sigma_{i}$, and $u \in \cO_{C}$ is an invertible regular function in a neighborhood of $\sigma_i$. 

It is important to notice that even if $e_{Z} \neq 0$ and consequently the underlying section $\underline{\eta}$ vanishes on $Z$, the morphism on the level of log structures remembers the zero orders at the markings. 

\begin{definition}
We call $m_{i}$ in the above the {\em contact order} of $\eta$ at $\sigma_{i}$. 
\end{definition}
By \cite{Chen, GS}, the contact order remains constant in a connected family of stable log maps. 

\vspace{2mm}
\noindent
{\bf Nodes.}
Let $z \in C$ be a node joining two irreducible components $Z_1$ and $Z_2$. Let $l$ be the edge in the dual graph $\uG$ of $\uC$ that corresponds to the node $z$. As said we will interchangeably use both $l$ and $z$ for the same object. 
Let $\lx$ and $\ly$ be local sections in $\cM_{C}$ near $z$ as in (\ref{equ:node-smoothing}) corresponding to the two components $Z_{1}$ and $Z_{2}$ respectively. Without loss of generality we can assume on the level of log structures
\begin{equation}\label{equ:node}
(\eta')^{\flat}: \eta^*\cN'_{L_{S}} \to \cM_{C,z}, \quad \ldelta \mapsto e_{Z_1} + c_{l}\cdot\lx + u 
\end{equation}
where $c_{l}$ is a non-negative integer, $e_{Z_1} = \beta(e_{Z_1})$ again by a slight abuse of notation, and $u$ is some invertible function in a neighborhood of $z \in Z_1$. Note that in this case there is a relation of the degeneracies of $Z_1$ and $Z_2$ on the level of characteristic monoid: 
\begin{equation}\label{equ:node-deg}
e_{Z_1} + c_{l} \cdot  e_{z} = e_{Z_2}. 
\end{equation}
Let $V \subset Z_{2}$ be a small neighborhood of $z$. Then on $V\setminus z$ we have
\begin{equation}\label{equ:sec-pole}
(\eta')^{\flat}(\ldelta) = e_{Z_2} - c_l \cdot \ly + v  
\end{equation}
for some invertible function $v \in \cO_{V}$. %Now we can define the contact order of a log twisted section at a node. 

\begin{definition}\label{def:node-contact}
In the above setting, we say that $c_{l}$ (or $c_{z}$) is the {\em contact order} of $\eta$ at the node $z$ (or the edge $l$).
\end{definition}
It is useful to notice that when $Z_1 = Z_2$, using the relation (\ref{equ:node-deg}), we have $c_l = 0$.

\vspace{2mm}
Note that the relation~\eqref{equ:node-deg} implies a {\em partial ordering} ``$\geq_{\min}$'' on the set $V(\uG)$ of irreducible components, which we call the {\em minimal partial ordering}. More precisely, we write $Z_1 >_{\min} Z_2$ if $c_l > 0$ and $Z_1 =_{\min} Z_2$ if $c_l = 0$.

\begin{remark}
The idea is that $e_Z$ measures the degeneracy of $Z$. If $c_l > 0$, then the log twisted section $\eta$ degenerates faster along $Z_2$ than $Z_1$, and hence $Z_2$ looks ``smaller'' compared to $Z_1$. Similarly for $c_l = 0$, it means $\eta$ has the same degenerate speed along $Z_1$ and $Z_2$, hence they have relatively comparable ``sizes''. 

%This ordering is in general only a partial because in general it does not directly compare two irreducible components that are disjoint. 
\end{remark}

%With the same motivation in mind we create the following convention. If the contact order $c_l > 0$, namely if $Z_1 >_{\min} Z_2$, we call $z$ an {\em incoming node} of $Z_{2}$ and an {\em outgoing node} of $Z_{1}$. If $c_l = 0$, we call $z$ an {\em unoriented node}. 

%\begin{equation}\label{equ:min-order}
%Z_{1} \leq_{\min} Z_{2}
%\end{equation}
%between any two irreducible components that intersect. 

%%%%%%%%%%%%%%%%%%%%%%%%
\subsection{Minimality}\label{ss:minimality}
%%%%%%%%%%%%%%%%%%%%%%%%

We discuss the minimality of stable log maps in the situation of log twisted sections. We refer to \cite{Chen, AC, GS} for more general situations. First assume that $\eta$ is a log twisted section over a log curve $C_{S} \to S$ with $\uS$ a geometric point, as considered in the preceding section. Let $\eta'$ be the stable log map as in Lemma~\ref{lem:sec-map}. 

Denote by $\uG$ the dual graph of the underlying curve $\uC_{S}$. Recall that $E(\uG)$ is the set of edges corresponding to nodes of $\uC_S$ and $V(\uG)$ is the set of vertices corresponding to irreducible components of $\uC_S$. We then define the {\em weighted graph} $G$ associated to $\eta$, which is formed out of the dual graph $\uG$ with the following extra data:
\begin{enumerate}
 \item The contact order $c_{l}$ for each edge $l \in E(G)$;
 \item A partition $V(G) = V^d(G)\sqcup V^{nd}(G)$ such that $e_{v} \neq 0$ if and only if $v \in V^d(G)$, where $e_{v} = e_Z$ is the degeneracy of the irreducible component $Z$ corresponding to $v$;
 \item The minimal partial ordering ``$\geq_{\min}$'' on $V(G)$ defined in Section~\ref{ss:local-structure}. 
\end{enumerate}

In the above, we wrote $V(G)$ instead of $V(\uG)$ etc for the part of the data from the underlying dual graph. The decorations ``$d$'' and ``$nd$'' on the partition of $V(G)$ indicate whether or not a component $Z$ is degenerate. In other words, it is degenerate if and only if the degeneracy of $\eta$ at $Z$ is non-trivial, that is, if and only if the underlying section $\ueta$ vanishes entirely on $Z$. 
For an edge joining two vertices $v_1 \geq_{\min} v_2$, we call the corresponding node an \emph{incoming node} of $v_2$. In other words, the orientation of an edge is from the bigger vertex to the smaller one. 

Consider the free abelian group 
\[
\langle e_v, e_l \rangle_{v \in V(G),\ l \in E(G)}
\]
generated by the degeneracy and smoothing elements, Take the quotient
\[
\langle e_v, e_l \rangle_{v \in V(G),\ l \in E(G)} / \sim
\]
where the relation ``$\sim$'' is given by 
\begin{enumerate}
 \item (non-degeneracy) $e_v \sim 0$ if $v \in V^{nd}(G)$;
 \item (edge relation) if $l$ is an edge joining $v_{1}$ and $v_{2}$ with $v_{1} \geq_{\min} v_{2}$, then 
\begin{equation}\label{equ:edge-relation}
e_{v_{1}} + c_{l} \cdot e_l \sim e_{v_{2}}.
\end{equation}
\end{enumerate}
Consider the maximal torsion-free subgroup
\[
\cG \subset \langle e_v, e_l \rangle_{v \in V(G),\ l \in E(G)} / \sim.
\]
We introduce the {\em minimal monoid} $\cM(G) \subset \cG$ which is the saturation submonoid in $\cG$ generated by the images of $e_v$ and $e_l$ for all $v \in V(G)$ and $l \in E(G)$. By \cite{Chen, AC, GS}, we have
\begin{enumerate}
 \item $\ocM(G)$ is a fine, saturated, and sharp monoid;
 \item $\ocM(G)^{gp} = \cG$;
 \item there is a canonical morphism 
\begin{equation}\label{equ:minimal}
\psi: \ocM(G) \to \ocM_{S}.
\end{equation}
\end{enumerate}

\begin{definition}\label{def:minimal}
The log twisted section $\eta$ is called {\em minimal} if $\psi$ is an isomorphism. A family of log twisted sections is called {\em minimal} if each geometric fiber is minimal.
\end{definition}

By \cite{Chen, AC, GS}, minimality forms an open condition over a family of log twisted sections, hence the above definition of minimality works well with families. Furthermore, we have the following universal property, see \cite{Chen, AC, GS} for a proof. 

\begin{proposition}\label{prop:minimal}
For any log twisted sections $\eta$ over a family of log curves $C_{S} \to S$, there is a minimal family of log twisted sections $\eta_{m}$ over a family of log curves $C_{S_{m}} \to S_{m}$ and a morphism $S \to S_{m}$ whose underlying map is the identity, such that $\eta$ is the pull-back of $\eta_{m}$ via $S \to S_{m}$. Moreover, the pair $(\eta_{m}, S \to S_m)$ is unique up to isomorphism. 
\end{proposition}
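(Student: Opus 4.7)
The plan is to deduce the result from the universal property of minimal stable log maps established in \cite{GS, Chen, AC}, by transporting the problem across the equivalence of Lemma~\ref{lem:sec-map}.

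First, replace $\eta$ by its associated stable log map $\eta': C_S \to L'_S$. The general minimality theorem for stable log maps then produces a minimal stable log map $\eta'_m: C_{S_m} \to L'_{S_m}$, over a log scheme $S_m$ with $\uS_m = \uS$, together with a morphism $S \to S_m$ with identity underlying morphism through which $\eta'$ factors uniquely. The next task is to upgrade $\eta'_m$ back to a log twisted section $\eta_m: C_{S_m} \to L_{S_m}$ with respect to the combined log structure (\ref{eq:log-lift}). On underlying maps $\eta_m$ agrees with $\eta'_m$, and its log lift is determined canonically: on the $\cN'_{L_{S_m}}$-summand it is given by $\eta'_m$, while on the $\cN_{L_{S_m}} = \psi_{S_m}^*\cM_{C_{S_m}}$-summand it is forced to be the identity by the section condition $\psi_{S_m}\circ\eta_m = \id_{C_{S_m}}$, exactly as in the proof of Lemma~\ref{lem:sec-map}.

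To verify that the universal log structure produced in this way is the one described in Definition~\ref{def:minimal}, I would work stalkwise at a geometric point $s \in \uS_m$ and compare monoids. The local calculations (\ref{equ:general-pt}), (\ref{equ:marked-pt}) and (\ref{equ:node})--(\ref{equ:sec-pole}) identify the image in $\ocM_{S_m,s}$ of the local generator of $\cN'_{L_S}$ along each component $Z$ with the degeneracy element $e_Z$; on a component where $\ueta$ is not identically zero, this pullback lies in $\cO^*$, yielding the non-degeneracy relation $e_Z = 0$. At a node, matching the two local models from the two branches via the smoothing equation (\ref{equ:node-smoothing}) produces precisely the edge relation (\ref{equ:edge-relation}). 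Thus the saturated submonoid of the maximal torsion-free quotient of $\langle e_v, e_l\rangle$ modulo exactly these relations is, on the one hand, the combinatorial $\ocM(G)$ of Section~\ref{ss:minimality}, and on the other hand the minimal monoid produced by the abstract construction of \cite{GS, Chen, AC}. Uniqueness of $(\eta_m, S \to S_m)$ up to unique isomorphism is inherited from the uniqueness for minimal stable log maps, since the correspondence $\eta \leftrightarrow \eta'$ is functorial and reversible.

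The main obstacle is the comparison step above: one has to verify that the explicit generators (one degeneracy per component, one smoothing per node) and the explicit relations (non-degeneracy on components where $\ueta$ does not vanish identically, plus the edge relations at each node) really do exhaust all the constraints imposed by the stable log map condition at every special point of $C_S$, and that no spurious torsion is introduced when passing to $\cG$. Once this identification is in place, the proposition follows at once from the universal property in \cite{GS, Chen, AC}.
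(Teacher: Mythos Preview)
Your proposal is correct and is essentially the paper's own argument, only more explicit: the paper gives no proof at all for Proposition~\ref{prop:minimal} beyond the sentence ``see \cite{Chen, AC, GS} for a proof,'' and the passage via Lemma~\ref{lem:sec-map} to stable log maps to $L'$ is exactly how the paper invokes those references (cf.\ the proof of Proposition~\ref{prop:representability}). Your identification of the combinatorial monoid $\ocM(G)$ with the minimal monoid of \cite{GS, Chen, AC} in the rank-one Deligne--Faltings case is the right thing to check, and the paper simply takes it for granted.
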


Recall that $\lM$ is the category of log twisted sections fibered over the category of log schemes. 
Denote by $M_{m}^{\dagger}$ the category of minimal log twisted sections fibered over the category of schemes. As a fibered category, $M_{m}^{\dagger}$ carries a universal minimal log structure $\cM_{M_{m}^{\dagger}}$. Hence we can view the pair $(M_{m}^{\dagger}, \cM_{M_{m}^{\dagger}})$ as a category fibered over log schemes. By the universal property of minimality in Proposition \ref{prop:minimal}, the pair $(M_{m}^{\dagger}, \cM_{M_{m}^{\dagger}})$ represents the fibered category $\lM$. Therefore, the representability of $\lM$ is a consequence of the following result. 

\begin{proposition}\label{prop:representability}
The fibered category $M_{m}^{\dagger}$ is represented by an algebraic stack. 
\end{proposition}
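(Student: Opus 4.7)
The plan is to realize $M_{m}^{\dagger}$ as an algebraic substack of the moduli stack of minimal stable log maps into the log target $L'$, and then invoke the representability results of \cite{AC, Chen, GS}. By Lemma \ref{lem:sec-map}, a minimal log twisted section with contact orders $\mu$ over a scheme $S$ (equipped with its universal minimal log structure) is the same datum as a minimal stable log map $\eta': C_S \to L'_S$ over the log curve $C_S$ pulled back from $M$, with contact profile $\mu$ at the markings, whose underlying morphism $\ueta'$ factors as a section of $\underline{\psi}: \uL \to \uC$. The moduli stack of minimal stable log maps into $L'$ with fixed genus, number of markings, contact profile $\mu$, and image class is represented by an algebraic stack carrying a canonical universal minimal log structure; denote its underlying algebraic stack by $\underline{\cK}$.

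Second, I would cut out $M_{m}^{\dagger}$ inside $\underline{\cK}$ by imposing the section condition on the underlying map. Given a family $\ueta': \uC_S \to \uL_S$ of underlying maps, the condition $\underline{\psi}_S \circ \ueta' = \id_{\uC_S}$ defines a closed subscheme of $S$, since it is the equalizer of two morphisms into the $\uC_S$-separated scheme $\uL_S$. This cuts out a closed algebraic substack of $\underline{\cK}$, and equipping it with the restriction of the universal minimal log structure provides the desired representation of $M_{m}^{\dagger}$. Functoriality of this construction then implies that the forgetful morphism $M_{m}^{\dagger} \to \Rpi(\uL)$ is representable; its finiteness is inherited from the corresponding property for the moduli of stable log maps proved in \cite{AC, Chen, GS}.

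The step I expect to be the main obstacle is ensuring that the universal minimal log structure varies well across the combinatorial stratification of $\Rpi(\uL)$ by the weighted graphs $G$ of Section \ref{ss:minimality}: the monoid $\ocM(G)$, the decoration $V^d(G) \sqcup V^{nd}(G)$, and the minimal partial ordering can all jump under specialization, so a naive stratum-by-stratum construction would not glue into a log structure on the whole moduli. This is precisely what the minimality theory of \cite{AC, Chen, GS} resolves: they prove minimality is an open condition in families of stable log maps and that the resulting minimal log structure is functorial in the underlying data. Invoking these results then immediately produces the algebraicity of $M_{m}^{\dagger}$ via the closed-substack construction above, which together with Proposition \ref{prop:minimal} completes the representability of $\lM(\mu)$ with its minimal log structure.
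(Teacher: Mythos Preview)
Your approach is essentially the paper's: embed $M_{m}^{\dagger}$ in the moduli of minimal stable log maps to $L'$ via Lemma~\ref{lem:sec-map}, invoke the algebraicity results of \cite{AC, Chen, GS}, and then cut out the section locus.

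There is one structural step you skip that the paper makes explicit. In the moduli stack $\underline{\cK}$ of stable log maps to the fixed target $L'$, the domain is an \emph{arbitrary} prestable log curve $C'/S$, with no given map $S \to M$ and no identification of $C'$ with the pullback $C_S$; thus your section condition ``$\underline{\psi}_S \circ \ueta' = \id_{\uC_S}$'' is not yet well-posed on $\underline{\cK}$. The paper first reduces (via \cite[Lemma~C.5]{AOV}) to the case that $\uM$ is a scheme, so that $L'$ is an honest log scheme, and then forms the fiber product $\fM_M = \fM(L')\times_{\fM_{g,n}} M$. This base change equips each moduli point with a map $S \to M$ and an identification $C' \cong C_S$ of log curves; only after that does the section condition become the comparison of two morphisms $\uC_S \to \uC$, carving out a substack $\fM^{\circ}_M \subset \fM_M$. (The paper records this as an open substack; your equalizer argument, once the base change is in place, would give it as closed. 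Either suffices for algebraicity.) You should insert this fiber product, or equivalently explain how to recover $\uS \to \uM$ from the composition $\uC' \to \uL \to \uC \to \uM$ and then impose that the induced $\uC' \to \uC_S$ is an isomorphism. With that correction, your argument and the paper's coincide; your remarks on minimality being an open condition and on the finiteness over $\Rpi(\uL)$ are in line with the paper as well.
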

\begin{proof}
By \cite[Lemma C.5]{AOV}, replacing $\uM$ by a smooth cover, it suffices to consider the case when $\uC \to \uM$ is a morphism of schemes, and then $L'$ is a log scheme over the underlying scheme $\uM$. By \cite{Chen, AC, GS}, the category of minimal stable log maps to $L'$ with genus $g$, curve class given by the zero section, and contact order $\mu$ form a log algebraic stack $\fM(L')$ with its minimal log structure. There is a tautological morphism given by the underlying universal curve 
\[
\fM(L') \to \fM_{g,n}. 
\]
Consider the base change 
\[
\fM_{M} = \fM(L')\times_{\fM_{g,n}}M.
\]
It follows that $\fM_{M}$ is also algebraic. Furthermore, there is an open sub-stack $\fM^{\circ}_{M} \subset \fM_{M}$ whose underlying morphism is a section of $\uL \to \uC$. By Lemma \ref{lem:sec-map}, we have $M_{m}^{\dagger} = \fM^{\circ}_{M}$. This finishes the proof.
\end{proof}

Combining Propositions~\ref{prop:minimal} and~\ref{prop:representability} completes the proof of Theorem~\ref{thm:log-twisted-sec}.
%%%%%%%%%%

%%%%%%%%%%%%%%%%%%%%%%%Introduction to log twisted abelian differentials
%%%%%%%%%%%%%%%%%%%%%%%%
\section{Log twisted differentials}
%%%%%%%%%%%%%%%%%%%%%%%%
%%%%%%%%%%%%%%%%%%%%%%%%

In this section we specialize the general theory of log twisted sections established in the preceding section to the case of differentials. % by setting the line bundle $L$ to be the dualizing line bundle. 

%%%%%%%%%%%%%%%%%%%%%%%%%%%%%
\subsection{Log twisted differentials and their moduli}\label{ss:log-twist-diff}
%%%%%%%%%%%%%%%%%%%%%%%%%%%%%

Using the notations in Section~\ref{ss:log-section}, we consider the following situation: 
\begin{enumerate}
 \item $\uM = \underline{\bM}_{g,n} $ the stack of stable curves of genus $g$ with $n$ markings; 
 \item $\uC = \underline{\bC}_{g,n} $ the universal curve over $\uM$ with the projection $\upi: \uC \to \uM$; 
 \item $\uL =\omega_{\upi}$ the dualizing line bundle over $\uC$ associated to the family $\upi$.
\end{enumerate}
We thus have the family of log curves  $\pi: C \to M$ with the canonical log structure associated to the underlying family $\upi$. Over each fiber of $\upi$, the line bundle $\uL$ is of degree $d = 2g-2$. 

Denote by $\mu = (m_{1},\cdots, m_{n})$ a partition of $2g-2$. Recall that we have $L = (\uL, \cM_{L})$, with 
$\cM_{L} = \cN'_{L}\oplus_{\cO^*}\cN_{L}$ and a log smooth morphism
\[
\psi: L \to C. 
\]
Recall also that we set $L' = (\uL, \cN'_{L})$. 

For any genus $g$, $n$-marked stable log curve $\pi_{S}: C_{S} \to S$, denote by $L_{S}$ the pull-back of $L$ via the morphism $S \to M$. Now we define log twisted differential as a special case of log twisted section. 

\begin{definition}\label{def:log-twisted-diff}
A {\em log twisted differential with signature $\mu$} over a family of stable log curves $C_{S} \to S$ is a log twisted section 
$\eta: C \to L$ with contact orders $\mu$ as in Definition \ref{def:log-twisted-sec}. It is called {\em minimal} if it is minimal in the sense of Definition \ref{def:minimal}.
\end{definition}

Note that $\mu$ already includes the information of genus and number of markings. Denote by $\lH(\mu)$ the category of log twisted differentials with siganture $\mu$ over the category of log schemes. Let $\cH$ be the Hodge bundle over $\uM$ in the classical sense. Theorem~\ref{thm:log-twisted-sec} then specializes as follows. 

\begin{corollary}\label{cor:diff-moduli}
The category $\lH(\mu)$ is represented by a separated Deligne-Mumford stack of finite type with the minimal log structure. Furthermore, the tautological morphism
\[
\lH(\mu) \to \cH
\]
by removing all log structures is representable and finite. 
\end{corollary}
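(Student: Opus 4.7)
The plan is to deduce this corollary essentially as a direct specialization of Theorem~\ref{thm:log-twisted-sec}, with the small additional argument required to upgrade ``algebraic stack'' to ``separated Deligne-Mumford stack of finite type.'' First, I would instantiate the general setup of Section~\ref{ss:log-section} by taking $\uM = \underline{\bM}_{g,n}$, $\upi \colon \uC \to \uM$ the universal stable curve, and $\uL = \omega_{\upi}$ the relative dualizing sheaf, whose fiberwise degree is $2g-2$, matching $|\mu|$. Under this specialization, a log twisted section in the sense of Definition~\ref{def:log-twisted-sec} is precisely a log twisted differential in the sense of Definition~\ref{def:log-twisted-diff}, so the fibered categories $\lM(\mu)$ and $\lH(\mu)$ agree. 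Theorem~\ref{thm:log-twisted-sec} then immediately gives that $\lH(\mu)$ is represented by an algebraic stack endowed with its minimal log structure, together with a representable finite morphism $\lH(\mu) \to \Rpi(\omega_{\upi})$.

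Next I would identify $\Rpi(\omega_{\upi})$ with the classical Hodge bundle $\cH$. By definition, $\Rpi(\omega_{\upi})$ represents the functor sending $\uS \to \uM$ to $H^0(\uC_{\uS},\, \omega_{\uS})$, and since $\upi$ is a flat, proper, Gorenstein family of curves of genus $g$, the pushforward $\upi_* \omega_{\upi}$ is a locally free sheaf of rank $g$ whose total space is, by definition, the Hodge bundle. Compatibility of formation of global sections with base change under the assumption of constant $h^0 = g$ yields the identification $\Rpi(\omega_{\upi}) = \cH$. Thus the morphism provided by Theorem~\ref{thm:log-twisted-sec} coincides with the tautological map $\lH(\mu) \to \cH$, and the representability and finiteness assertions of the corollary follow.

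Finally, I would upgrade the remaining adjectives by transferring properties from $\cH$ via the representable finite morphism. Since $\uM = \underline{\bM}_{g,n}$ is a separated Deligne-Mumford stack of finite type over the base field and $\cH \to \uM$ is (the total space of) a vector bundle, in particular an affine morphism of finite type, the Hodge bundle $\cH$ is itself a separated Deligne-Mumford stack of finite type. A representable finite morphism is automatically unramified, proper, and of finite type; hence the source of such a morphism inherits the properties of being Deligne-Mumford, separated, and of finite type from the target. Applying this to $\lH(\mu) \to \cH$ gives the required properties for $\lH(\mu)$.

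I do not anticipate a genuine obstacle here; the only point that deserves a careful sentence is the base change identification $\Rpi(\omega_{\upi}) = \cH$, where one should invoke cohomology-and-base-change together with the fact that $h^0(\omega) = g$ is constant over $\uM$. Everything else is a formal consequence of Theorem~\ref{thm:log-twisted-sec} together with standard stability of \emph{Deligne-Mumford}, \emph{separated}, and \emph{finite type} under representable finite morphisms.
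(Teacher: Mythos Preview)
Your approach is correct and matches the paper's: the corollary is stated there simply as a specialization of Theorem~\ref{thm:log-twisted-sec}, with the upgrade of adjectives left implicit, and you have filled in exactly the missing details (identifying $\Rpi(\omega_{\upi})$ with $\cH$ and transferring properties along the representable finite map).

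One small correction: a finite morphism is \emph{not} automatically unramified (e.g.\ $\spec k[\epsilon]/(\epsilon^2) \to \spec k$), so that clause is false as written. Fortunately you do not need it. Representability alone suffices to transfer the Deligne--Mumford property: if $U \to \cH$ is an \'etale atlas by a scheme, then $\lH(\mu)\times_{\cH} U$ is an algebraic space (by representability) and \'etale surjects onto $\lH(\mu)$, giving the required atlas. Separatedness and finite type then follow from properness and finite type of a finite morphism, as you say.
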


Note that the underlying stack of $\lH(\mu)$ is the moduli of minimal log twisted differentials. 

Let $\cH(\mu)\subset \lH(\mu)$ be the open substack with the trivial log structure. Then $\cH(\mu)$ is the moduli space of (not identically zero) abelian differentials on smooth curves whose zero orders are of type $\mu$, i.e., the stratum of abelian differentials with signature $\mu$. 
Denote by $\mH(\mu)$ the component of $\lH(\mu)$ containing $\cH(\mu)$ as an open dense substack. We call $\mH(\mu)$ the {\em main component} of $\lH(\mu)$. 

%%%%%%%%%%%%%%%%%%%%%%%%%%%%%%
\subsection{Induced differentials on irreducible components}\label{ss:fiberwise-induced-diff}
%%%%%%%%%%%%%%%%%%%%%%%%%%%%%%

Consider a log twisted differential $\eta$ over a family of log curves $\pi_{S}: C_{S} \to S$. We assume that $\uS$ is a geometric point. Let 
$G$ be the weighted graph of $\eta$ as defined in Section~\ref{ss:minimality}. Fix a chart $\beta: \ocM_{S} \to \cM_{S}$. Then we obtain a composition 
\[
\ocM(G) \to \ocM_{S} \to \cM_{S}
\]
again denoted by $\beta$. For each element $e \in \ocM(G)$, we identify $e$ with its image in $\cM$ under $\beta$. 

For a vertex $v \in V(G)$, let $Z \subset C_{S}$ be the corresponding irreducible component. We first consider the composition
\begin{equation}\label{equ:diff-comp}
\eta_{Z}'' := (\eta')^{\flat} - e_{v}: \cN_{L'_{S}}|_{Z} \to \cM_{C}|_{Z} \to \cO_{Z},
\end{equation}
where the first arrow is defined by $\delta \mapsto (\eta')^{\flat}(\delta) - e_{v}$ with $e_{v} = e_Z$ the degeneracy of $v$ (and $Z$). By the descriptions of \eqref{equ:general-pt}, \eqref{equ:marked-pt}, \eqref{equ:node}, and \eqref{equ:sec-pole}, the above morphism is well-defined away from the incoming nodes of $Z$ (those joining $Z$ to bigger components under $\geq_{\min}$), hence it extends to a meromorphic morphism 
\begin{equation*}%\label{equ:induced-diff}
\eta^{\beta}_{Z}: \cO_{\uL}|_{Z} \to \cO_{Z} 
\end{equation*}
with poles of contact order $c_{z}$ at each incoming node $z$ of $Z$. Therefore, $\eta^{\beta}_{Z}$ corresponds to a meromorphic differential on $Z$ such that
\begin{enumerate}
 \item $\eta^{\beta}_{Z}$ has a zero of order $m_i$ at each marking $\sigma_{i} \in Z$;
 \item $\eta^{\beta}_{Z}$ has a zero of order $c_{z} - 1$ at a node $z \in Z$ if the corresponding edge in $G$ is oriented from the vertex of $Z$ to another vertex;  
 \item $\eta^{\beta}_{Z}$ has a simple pole at an unoriented node $z \in Z$ where $c_{z} = 0$;   
 \item $\eta^{\beta}_{Z}$ has a pole of order $c_{z} + 1$ at a node $z \in Z$ if the corresponding edge in $G$ is oriented from another vertex to $Z$.
\end{enumerate}

Note that the zero or pole orders of the differential $\eta^{\beta}_{Z}$ at the node $z$ differ from the contact order $c_z$ by one. This is due to the fact that the dualizing line bundle is locally generated by differentials with simple poles at the nodes. 

\begin{definition}
In the above setting, we say that $\eta^{\beta}_{Z}$ is the {\em meromorphic differential induced by the chart $\beta$}, or simply the {\em induced differential} if there is no confusion. 
\end{definition}

Next we study how induced differentials behave with different charts.

\begin{proposition}\label{prop:diff-comp}
Consider two charts $\beta_i : \ocM_{S} \to \cM_{S}$ for $i=1,2$. 
Let $\eta^{\beta_{i}}_{Z}$ be the meromorphic differential induced by $\beta_i$. Then we have
\[
u \cdot \eta^{\beta_1}_{Z} = \eta^{\beta_2}_{Z}
\]
where $u$ is the non-zero constant satisfying  
\[
\beta_{1}(e_{Z}) = u + \beta_{2}(e_{Z}).
\]
Furthermore, the induced differential $\eta^{\beta_{i}}_{Z}$ only depends on the lifting $\beta(e_{Z}) \in \cM_{S}$ of $e_{Z} \in \ocM_{S}$.
\end{proposition}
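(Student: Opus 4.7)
The plan is to unwind the construction of the induced differential in Section~\ref{ss:fiberwise-induced-diff} and show that the chart $\beta$ enters only through the single lift $\beta(e_{Z}) \in \cM_{S}$, after which the scaling relation follows by a one-line comparison of the two local expressions.

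First I would establish the second statement. Looking at the definition~\eqref{equ:diff-comp}, the morphism $\eta''_{Z} = (\eta')^{\flat} - e_{v}$ uses the chart $\beta$ only to identify $e_{v} = e_{Z}$ with its lift $\beta(e_{Z})$; no other value of $\beta$ enters the construction. Since the meromorphic extension $\eta^{\beta}_{Z}$ across the incoming nodes of $Z$ is uniquely determined by its restriction to the dense open locus of smooth unmarked points, it too depends only on $\beta(e_{Z})$. This immediately gives the second assertion.

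For the first assertion I would work locally at each type of point on $Z$. At a smooth unmarked point $z \in Z$, applying~\eqref{equ:general-pt} with both charts and subtracting gives, in the additive notation of the log monoid,
\[
\eta^{\beta_{2}}_{Z}\big|_{z} \;=\; (\eta')^{\flat}(\ldelta) - \beta_{2}(e_{Z}) \;=\; \bigl((\eta')^{\flat}(\ldelta) - \beta_{1}(e_{Z})\bigr) + u \;=\; \eta^{\beta_{1}}_{Z}\big|_{z} + u,
\]
which, since the values of $\eta^{\beta_{i}}_{Z}$ on this open set lie in $\cO^{*}_{Z}$, translates into the multiplicative identity $\eta^{\beta_{2}}_{Z} = u \cdot \eta^{\beta_{1}}_{Z}$. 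The same computation at a marking using~\eqref{equ:marked-pt} and at a node using~\eqref{equ:sec-pole} yields the identical transformation rule, since the extra terms $m_{i}\delta_{i}^{\dagger}$ and $-c_{l}\ly$ are independent of the chart and cancel in the difference.

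The only subtlety to address is whether this rescaling survives the meromorphic extension across the incoming nodes of $Z$. But $u \in \cO^{*}_{S}$ is a nonzero constant, as $\uS$ is a geometric point, so multiplication by $u$ is a global automorphism of the line bundle $\omega_{Z}\bigl(\sum_{z} (c_{z}+1)\cdot z\bigr)$ governing the induced meromorphic differentials. It therefore commutes with the meromorphic extension from the open locus of smooth unmarked points and glues tautologically on overlaps. I expect this node check to be the only place requiring care, but it is purely formal once one notes the constancy of $u$ on $Z$; the rest is a direct substitution in the local formulas.
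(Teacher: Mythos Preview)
Your proposal is correct and follows essentially the same route as the paper: both arguments observe that the construction in~\eqref{equ:diff-comp} uses only the single element $\beta(e_Z)$, so that replacing $\beta_1(e_Z)$ by $\beta_2(e_Z) = \beta_1(e_Z) - u$ shifts the output by the unit $u$. Your version spells out the local check at markings and nodes and the compatibility with the meromorphic extension, whereas the paper simply cites~\eqref{equ:diff-comp} directly; the paper also adds a short remark (not strictly required by the proposition as stated) that when $e_Z = 0$ one may run the same construction for an arbitrary lift $e \in \cO_S^*$, not just the chart value $\beta(0)=1$, obtaining $\eta^{e}_{Z} = e \cdot \eta|_{Z}$.
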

\begin{proof}
For the first statement, note that $\beta_1$ and $\beta_2$ are both sections of the quotient $\cM_{S} \to \ocM_{S}$. Then the difference $\beta_{1}(e_{Z}) - \beta_{2}(e_{Z})$ lies in the subgroup $\cO^*_{S} \subset \cM_{S}$, because its image in $\ocM_{S}$ is trivial. Now the statement follows from \eqref{equ:diff-comp}. 

For the second statement, if $e_{Z} \neq 0$, then it follows from (\ref{equ:diff-comp}) and  the construction of $\eta^{\beta_{i}}_{Z}$. If $e_{Z} = 0$, then any lift $e \in \cM_{S}$ of $e_Z$ is necessarily a non-zero constant. Note that if $e \neq 1$, then there does not exist any chart $\beta$ with $e = \beta(e_{Z})$. However, one can still carry out the construction \eqref{equ:diff-comp} in this case and obtain a differential $\eta^{e}_{Z}$ on $Z$. We observe that 
\[
\eta^{e}_{Z} = e \cdot \eta|_{Z}.
\]
This finishes the proof.
\end{proof}

By the above proposition, for any information about $\eta_{Z}^{\beta}$ that is independent of scaling, we can drop the subscript $\beta$. In particular, we have the following definition. 

\begin{definition}\label{def:pole-zero}
In the above setting, a point $z \in Z$ is called a {\em pole} (or {\em zero}) of $\eta_{Z} := \eta|_{Z}$,  if it is a pole (or zero) of $\eta^{\beta}_{Z}$ for one (hence every) chart $\beta: \ocM_{S} \to \cM_{S}$. We denote by 
\[\ord_{z}\eta_{Z} = \ord_{z}\eta^{\beta}_{Z}\] 
the (zero or pole) {\em order} of the induced differential $\eta^{\beta}_{Z}$ at $z$ for some chart $\beta$. 
\end{definition}

By Proposition \ref{prop:diff-comp}, $\ord_{z}\eta_{Z}$ is independent of the choice of charts. If $z$ is a pole, then it is an incoming node of $Z$ with pole order $c_z + 1$ for the induced differential $\eta_{Z}$.

%%%%%%%%%%%%%%%%%%%%%%%Parities and spin structures
%%%%%%%%%%%%%%%%%%%%%%%%
\section{Spin structure}\label{sec:spin}
%%%%%%%%%%%%%%%%%%%%%%%%
%%%%%%%%%%%%%%%%%%%%%%%%

In this section we show that the moduli space of log twisted differentials distinguishes the spin parity in the boundary. 

%%%%%%%%%%%%%%%%%%%%%%%%%%%%%
\subsection{Statements and results}\label{ss:spin-statement}
%%%%%%%%%%%%%%%%%%%%%%%%%%
Consider the case $\mu = (m_{i})_{i=1}^{n}$ with each $m_{i}$ even. Let $\eta$ be a non-degenerate differential  with signature $\mu$ on a smooth algebraic curve $C$ over a geometric point $S$. In this case, the log structure $\cM_S$ is trivial, and $\eta$ defines the  canonical divisor 
\[
(\eta) := \sum_{i=1}^{n} m_{i}\cdot \sigma_{i}.
\]
Since each $m_{i}$ is even, we define the {\em spin divisor} 
\[
(\sqreta) := \sum_{i=1}^{n} \frac{m_{i}}{2}\cdot\sigma_{i}.
\]
We say that $\eta$ carries an {\em even} or {\em odd spin structure} if 
\[
\dim H^0(\cO_{C}(\sqreta))
\]
is even or odd respectively. In this section, we will extend the notion of spin parity to the degenerate case, and show that the even and odd spin structures do not mix up in the boundary of the log compactification. More precisely, we prove the following. 

\begin{thm}\label{thm:spin}
Consider the partition $\mu = (m_{i})_{i=1}^{n}$ with each $m_{i}$ even. Then we have a disjoint union 
\[
\lH(\mu) = \lH(\mu)^{+}\sqcup \lH(\mu)^{-},
\]
where $\lH(\mu)^{+}$ (resp. $\lH(\mu)^{-}$) is the open and closed substack of $\lH(\mu) $ parameterizing log twisted differentials with even (resp. odd) spin parity.
\end{thm}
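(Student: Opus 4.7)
The strategy is to associate to each log twisted differential $\eta$ a theta characteristic $\cL$ on an appropriate twisted curve $\tC \to C$, and then to invoke the deformation invariance of $h^0 \pmod 2$ for theta characteristics in families of twisted stable curves (Mumford~\cite{Mumford}, Cornalba).

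First I would carry out a parity analysis on each irreducible component. For a log twisted differential over a geometric point with weighted graph $G$, and for each $z \in Z$, let $o_Z(z)$ denote the order of the induced meromorphic differential $\eta_Z$ at $z$, as computed in Section~\ref{ss:fiberwise-induced-diff}: thus $o_Z(\sigma_i) = m_i$ at markings, $o_Z(z) = c_z - 1$ at outgoing nodes, $o_Z(z) = -(c_z + 1)$ at incoming nodes, and $o_Z(z) = -1$ at horizontal nodes. Since $\sum_{z} o_Z(z) = 2 g_Z - 2$ and each $m_i$ is even, reducing modulo $2$ yields
\[
\#\{ z \in Z : o_Z(z) \text{ is odd}\} \equiv 0 \pmod 2 ,
\]
which is the standard prerequisite for the existence of a square root of $\cO_Z(\operatorname{div}\eta_Z)$ after passing to a $\ZZ/2$-orbifold at the odd-order nodes.

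Next I would construct the spin bundle. Form the twisted curve $\tC \to C$ by taking a $\ZZ/2$-root stack at every node $z$ where $o_Z(z)$ is odd. On each component $\tilde Z$ of $\tC$, define the (fractional) Cartier divisor
\[
D_Z = \sum_{z \in Z} \frac{o_Z(z)}{2} \, z ,
\]
where at a root-stacked point one uses the tautological half-integer Cartier divisor. The parity count above ensures that the local gluings at the root-stacked nodes can be chosen consistently, so $\{\cO_{\tilde Z}(D_Z)\}_Z$ descend to a line bundle $\cL$ on $\tC$. Since $2 D_Z = \operatorname{div} \eta_Z$ on each component, one obtains an isomorphism $\cL^{\otimes 2} \cong \omega_{\tC/S}$; that is, $\cL$ is a theta characteristic on $\tC$. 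By Proposition~\ref{prop:diff-comp} the orders $o_Z(z)$ and the loci where they are odd are chart-independent, so the construction is canonical; extending it over families using the minimal log structure yields a universal twisted curve $\tC_{\lH(\mu)} \to \lH(\mu)$ equipped with a universal theta characteristic $\cL$.

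Finally, Mumford's theorem together with its extension to families of twisted (equivalently, stable spin) curves implies that the function $s \mapsto h^0(\tC_s, \cL_s) \pmod 2$ is locally constant on $\lH(\mu)$. Since on the smooth stratum $\cH(\mu)$ the construction reduces to $\cL = \cO_C\big(\sum \tfrac{m_i}{2}\sigma_i\big)$, it recovers the classical spin parity of Section~\ref{ss:spin-statement}, giving the desired decomposition $\lH(\mu) = \lH(\mu)^+ \sqcup \lH(\mu)^-$. The main obstacle is the globalization step: the combinatorial data (weighted graph, orientation, parities of the $c_z$, hence the root-stacked locus) jump across strata of $\lH(\mu)$, so one must verify carefully, using the minimal log structure, that $\tC$ and $\cL$ vary algebraically and glue across these specializations, rather than merely being a pointwise assignment. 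Once this functoriality is established the theorem follows at once.
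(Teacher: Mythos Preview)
Your overall architecture---pass to an orbifold curve, construct a spin bundle, and invoke deformation invariance for twisted spin curves---matches the paper's. The genuine gap is in how the spin bundle is built.

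You propose to define $\cL$ componentwise via the divisors $D_Z = \sum \frac{o_Z(z)}{2}\,z$ and then glue. But the line bundles $\cO_{\tZ}(D_Z)$ do not glue canonically: at each node one must choose an identification of one-dimensional fibres, and different choices produce non-isomorphic theta characteristics with, in general, different parities. Your parity count only guarantees that \emph{some} gluing exists, not that a preferred one does; Proposition~\ref{prop:diff-comp}, which you cite for canonicity, controls only the orders $o_Z(z)$, not the gluing data. This is not merely the family-level globalization obstacle you flag at the end---it is already an issue over a single geometric point. Indeed, Section~\ref{ss:spin-flat} exhibits two distinct minimal log twisted differentials over the \emph{same} underlying twisted differential (same components, same induced $\eta_Z$, hence identical $D_Z$) with opposite spin parities. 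Since your $D_Z$ depend only on the underlying $\eta_Z$, your construction cannot distinguish these two log objects and therefore cannot produce a well-defined parity on $\lH(\mu)$.

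The paper's resolution is to extract the spin bundle directly from the log morphism $\eta$ rather than from the induced componentwise differentials. After passing to the orbifold curve with $\mu_2$-structure at nodes of \emph{odd} contact order (note: opposite to the nodes your $o_Z$-criterion selects, since $o_Z(z)\equiv c_z+1 \pmod 2$), and after an auxiliary base change making each degeneracy $e_v$ divisible by $2$, the global section $\bar\delta \in \ocM_{\tC}$ becomes divisible by $2$, and the spin bundle is the line bundle associated to the torsor $\frq^{-1}(\tfrac{1}{2}\bar\delta)\subset \cM_{\tC}$. This is intrinsically global---no gluing step is needed---and it is sensitive to the full log datum $\eta^\flat$, not just to the $\eta_Z$. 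The remaining work (Proposition-Definition~\ref{prop-def:spin} and Proposition~\ref{prop:spin-inv}) is to check independence of the auxiliary choices and to assemble the family version, after which the Abramovich--Jarvis deformation invariance you cite applies.
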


Spin parity for log twisted differentials is defined in Proposition-Definition \ref{prop-def:spin}. In general, spin bundles only exist over orbifold covers of the underlying stable curves. However, we prove that the spin parity does not depend on various choices in the construction, hence is well-defined for a given log twisted differential.  The above theorem then follows from  Proposition~\ref{prop:spin-inv} which states that the spin parity remains constant on each connected family of log twisted differentials. 

%%%%%%%%%%%%%%%%%%%%%%%%
\subsection{An easy case of spin parity on the boundary}
%%%%%%%%%%%%%%%%%%%%%%%%
In this section, we first discuss how to define the spin structure of the boundary objects under certain simplifications. 

Let $\eta$ be a log twisted differential over a log curve $C \to S$ with $\uS$ a geometric point. Let $\mu = (m_i)_{i=1}^{n}$ be the signature of $\eta$ with each $m_i$ even. We make the following two assumptions on $\eta$:

\begin{assumption}\label{assu:spin}
Let $G$ be the graph associated to $\eta$. We assume that
\begin{enumerate}
\item For each edge $l \in E(G)$, the contact order $c_l \in \NN$ is even;
\item For each vertex $v \in V(G)$, the degeneracy $e_v$ is divisible by $2$, i.e., $\frac{1}{2}\cdot e_v \in \ocM(G)$.
\end{enumerate}
\end{assumption}

Note that the log twisted differential $\eta: C \to L$ is determined by $\eta': C \to L'$ where $L' = (\uL, \cN'_{L})$ as in Section \ref{ss:log-twist-diff}. Since $\cN'_{L}$ is defined by the zero section of the line bundle $\uL \to C$, it is a Deligne-Falitings log structure of rank one, see \cite[Complement 1]{KKato} or \cite[Appendix A.2.]{Chen}. In particular, we have a morphism 
\begin{equation}\label{equ:DF}
\bar{\theta}: \NN \to \ocN'_{L}
\end{equation}
which locally lifts to a chart of $\cN'_{L}$. Here $\NN$ is viewed as the constant sheaf on $\uC$. We form the following commutative diagram
\begin{equation*}
\xymatrix{
                                       & \cN'_{L} \ar[r]^{(\eta')^{\flat}} \ar[d] & \cM_{C} \ar[d]_{\frq} \\
 \NN \ar[r]^{\bar{\theta}}     & \ocN'_{L} \ar[r]^{(\bar{\eta}')^{\flat}} & \ocM_{C}. 
}
\end{equation*}
Denote by $\delta \in \ocM_{C}$ the image of $1 \in \NN$ via the bottom composition. The assumption (\ref{assu:spin}) implies that $\frac{\delta}{2} \in \ocM_{C}$ is a global section as well. Denote by $\cT = \frq^{-1}(\frac{\delta}{2})$. We observe that $\cT$ is an $\cO^*_{\uC}$-torsor over $\uC$. Indeed, let $E$ be the line bundle on $\uC$ associated to $\cT$. Since $\uL$ is the dualizing line bundle over $\uC$, we have an isomorphism of line bundles
\[
E^{\otimes 2} \cong \uL^{\vee}.
\]
We thus obtain a {\em spin bundle} $E^{\vee}$ over $\uC$ associated to $\eta$. 

\begin{definition}
Under Assumption~(\ref{assu:spin}), we say that $\eta$ carries an even (resp. odd) spin structure if $\dim H^0(E^{\vee})$ is even (resp. odd). 
\end{definition}

\begin{proposition}
The spin bundle associated to $\eta$ is the same as the spin bundle associated to the corresponding minimal object $\eta_{min}$.
\end{proposition}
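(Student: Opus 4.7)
The plan is to trace each step of the spin bundle construction through the minimalization morphism $h: S \to S_{min}$ of Proposition~\ref{prop:minimal} and observe that nothing changes under it.

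Since $h$ has identity underlying morphism, base change along $h$ gives $\tilde h: C \to C_{min}$ also with identity underlying morphism, so $\uC = \uC_{min}$, $\uL = \uL_{min}$, $\cO^*_{\uC} = \cO^*_{\uC_{min}}$, and the Deligne-Faltings log structure $\cN'_L$ coincides with $\cN'_{L_{min}}$ because it is determined by the zero section alone. Write $\delta_{min}$, $\cT_{min}$, $E_{min}$ for the analogues of $\delta$, $\cT$, $E$ built from $\eta_{min}$. Since $\eta = \tilde h^*\eta_{min}$, the log morphism $(\eta')^\flat$ factors as $\tilde h^\flat \circ (\eta'_{min})^\flat$. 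Composing with $\bar\theta: \NN \to \ocN'_L$ and passing to characteristic monoids, we obtain $\delta = \overline{\tilde h^\flat}(\delta_{min})$ in $\ocM_C$, and hence $\frac{\delta}{2} = \overline{\tilde h^\flat}(\frac{\delta_{min}}{2})$. Note that Assumption~\ref{assu:spin} is phrased in the minimal monoid $\ocM(G)$, so its content is the same for both objects.

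Next, $\tilde h^\flat: \cM_{C_{min}} \to \cM_C$ commutes with the quotient $\frq$ to the characteristic sheaf and restricts to the identity on $\cO^*$. Combined with the equality of $\delta/2$ established above, $\tilde h^\flat$ sends $\cT_{min} = \frq_{min}^{-1}(\frac{\delta_{min}}{2})$ into $\cT = \frq^{-1}(\frac{\delta}{2})$ in an $\cO^*_{\uC}$-equivariant manner. Since both are $\cO^*_{\uC}$-torsors on the same underlying curve, any such equivariant map is automatically an isomorphism of torsors, yielding a canonical isomorphism of associated line bundles $E_{min} \cong E$. This isomorphism is compatible with the squarings $E_{min}^{\otimes 2} \cong \uL^\vee \cong E^{\otimes 2}$, since these encode the identity $2 \cdot \frac{\delta}{2} = \delta$ which $\tilde h^\flat$ preserves; dualizing then yields $E_{min}^\vee \cong E^\vee$ as spin bundles, as claimed.

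The main obstacle is the log-structural bookkeeping, namely verifying that $\tilde h^\flat$ really descends to the characteristic sheaves in a way compatible with both the Deligne-Faltings chart $\bar\theta$ and the torsor quotient $\frq$. This is formal once one invokes the universal property of log pullback, but the formulas must be tracked carefully through the fs fiber product $C = C_{min} \times_{S_{min}} S$. After that point the argument is automatic, since the spin bundle is by construction determined by an $\cO^*$-torsor on the common underlying curve, and any morphism of $\cO^*$-torsors (over the same sheaf of groups) is an isomorphism.
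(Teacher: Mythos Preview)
Your proposal is correct and follows essentially the same approach as the paper: both arguments use the pullback morphism $\tilde h^\flat : \cM_{C_{min}} \to \cM_C$ coming from $\eta = h^*\eta_{min}$ to show that $\delta_{min}$ maps to $\delta$ on the level of characteristic sheaves, and then conclude that the induced map of $\cO^*$-torsors $\cT_{min} \to \cT$ is an isomorphism. Your write-up is somewhat more explicit about why an $\cO^*$-equivariant map of torsors over the same underlying curve is automatically an isomorphism, and about the compatibility with the squaring maps, but the content is the same.
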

\begin{proof}
Let $\eta'_{min}: C_{min} \to L'$ be the corresponding morphism of log schemes of $\eta_{min}$. We have the commutative diagram
\begin{equation*}
\xymatrix{
                                       & \cN'_{L} \ar[r]^{(\eta'_{min})^{\flat}} \ar[d] & \cM_{C_{min}} \ar[d]_{\frq_{min}} \ar[r] & \cM_{C} \ar[d]_{\frq} \\
 \NN \ar[r]^{\bar{\theta}}     & \ocN'_{L} \ar[r]^{(\bar{\eta}'_{min})^{\flat}} & \ocM_{C_{min}} \ar[r] & \ocM_{C}. 
}
\end{equation*}
Denote by $\delta_{min} \in \ocM_{C_{min}}$ the image of $1 \in \NN$ of the first two bottom maps. Clearly, the image of $\delta_{min}$ in $\ocM_{C}$ is $\delta$.  Denote by $\cT_{\min} = \frq_{min}^{-1}(\frac{\delta_{min}}{2})$. Then the above commutative diagram induces an isomorphism of the torsors
\[
\cT_{min} \to \cT.
\]
This proves the statement. 
\end{proof}

In general, if the two assumptions in (\ref{assu:spin}) do not hold, extra modifications are necessary for defining the spin parity. First, we use the orbifold approach of \cite{AJ} to allow orbifold structures along the nodes with odd contact orders. The auxilary orbifold structures guarantee that all contact orders in the orbifold case become even. Then we will need to apply further base change of the log structure of the base to make sure that all degeneracies are divisible by $2$. Finally, we verify that the spin parity is independent of various choices in the above modifications.

%%%%%%%%%%%%%%%%%%%%%%%
\subsection{Orbifolding along odd contact nodes}\label{ss:orbi-curve}
%%%%%%%%%%%%%%%%%%%%%%%

Below we introduce the auxiliary orbifold curve. First consider the case of a geometric fiber. Let $\eta$ be a minimal log twisted differential over a log curve $C \to S$ with $\uS$ a geometric point. Let $G$ be the weighted graph of $\eta$. Consider the canonical isomorphism of monoids $\psi: \ocM(G) \to \ocM_{S}$ as in (\ref{equ:minimal}). For simplicity, we will identify $e \in \ocM(G)$ with its image $\psi(e) \in \ocM_{S}$ when there is no confusion. 

We take an orbifold nodal curve $\tilde{\uC}$ over $\uS$ as in \cite[Definition 4.1.2]{AV}\footnote{Such $\tilde{\uC}$ is called a twisted nodal curve in \cite{AV}. Here we use ``orbifold'' to avoid possible confusion, as ``twist'' has been already used for log twisted differentials.} with the following properties: 
\begin{enumerate}
 \item The coarse moduli morphism is $\underline{\frt}_{c}: \tilde{\uC} \to \uC$; 
 \item The stacky locus of $\tilde{\uC}$ consists of the nodes whose images in $\uC$ are nodes with odd contact orders;
 \item Each stacky node of $\tilde{\uC}$ is a $\mu_{2}$-gerb over $\uS$, where $\mu_2$ is the multiplicative group of order $2$.
\end{enumerate}
Note that the morphism $\underline{\frt}_{c}: \tilde{\uC} \to \uC$ is isomorphic away from the stacky nodes, and the orbifold curve $\tilde{\uC}$ is uniquely determined by the underlying curve $\uC$ (see \cite[Section 1.2]{AF}). Indeed, let $\tz \in \tilde{\uC}$ be a stacky node with its image $z \in \uC$. Then \'etale locally around $\tz$, the orbifold curve $\tilde{\uC}$ is given by the stack quotient
\begin{equation*}%\label{equ:orbi-node}
\left[ (\spec k[\tx, \ty]/(\tx\cdot\ty)) \Large/ \mu_{2} \right]
\end{equation*}
where the group action of $\mu_{2}=\{1 = \zeta^2, \zeta\}$ is given by 
\[\zeta\cdot (\tx, \ty) = (\zeta\cdot \tx, \zeta\cdot \ty).\] 
Assume that locally near $z$, the underlying curve $\uC$ is of the form 
\[
\spec k[x, y]/(x\cdot y).
\]
The morphism $\underline{\frt}_{c}$ locally near $\tz$ has the form 
\begin{equation}\label{equ:odd-node-map}
\underline{\frt}^*_{c}: (x, y) \mapsto (\tx^2, \ty^2).
\end{equation}

%%%%%%%%%%%%%%%%%%%%%%%%%%%%%%%
\subsection{Canonical log structure on orbifold curves} \label{ss:orbi-log-curve}
%%%%%%%%%%%%%%%%%%%%%%%%%%%%%%

By \cite[Theorem 3.6]{LogCurve}, there is a canonical log structure on the orbifold curve $\tilde{\uC} \to \uS$, which yields a family of log orbifold curves:
\[
\tpi^{\sharp} : \tC^{\sharp} \to \tS^{\sharp}.
\]
By \cite[Corollary 4.7]{LogCurve}, we have a commutative diagram
\begin{equation}\label{equ:orbi-log}
\xymatrix{
\tC^{\sharp} \ar[r]^{\frt_{c}} \ar[d] & C^{\sharp} \ar[d] \\
\tS^{\sharp} \ar[r]^{\frt_{b}} & S^{\sharp}.
}
\end{equation}
where the underlying morphism of $\frt_b$ is an isomorphism of the underlying scheme $\uS$. 

Note that both $\uC$ and $\tilde{\uC}$ have the same dual graph $\uG$. Let $k = |E(G)|$ be the total number of edges, $k_{1}$ be the number of edges with odd contact orders, and $k_{2}$ the number of edges with even contact orders, so $k = k_1 + k_2$. Next, we will use notations in Section \ref{ss:log-curve} for the log curve $\pi^{\sharp}: C^{\sharp} \to S^{\sharp}$. Following \cite[Section 4]{LogCurve}, we give a description of (\ref{equ:orbi-log}) on the level of characteristic sheaves. \\

\noindent {\bf The base.} Over $\uS$, we have
\[
\ocM_{\tS^{\sharp}} = \NN^{k} = \NN^{k_{1}}\oplus \NN^{k_{2}}
\] 
with the generators one-to-one corresponding to the edges of $\uG$. For each edge $l \in E(G)$, denote by $\tilde{e}_{l}$ the corresponding generator of $\ocM_{\tS^{\sharp}}$. We call $\tilde{e}_{l}$ the {\em smoothing parameter} of the node corresponding to $l$. The morphism 
\begin{equation*}%\label{equ:change-mon-orbi}
\bar{\frt}^{\flat}_{b}: \ocM_{S^{\sharp}} \to \ocM_{\tS^{\sharp}}
\end{equation*}
is defined by $e_{l} \mapsto \tilde{e}_{l}$ if $l$ has even contact order, and defined by $e_{l} \mapsto 2\cdot \tilde{e}_{l}$ if $l$ has odd contact order.

\vspace{.5mm}

\noindent
{\bf Smooth unmarked points of $\tilde{\uC}$. }
At a smooth unmarked point $z \in \tilde{\uC}$, we have $\ocM_{\tC^{\sharp},z} = \ocM_{\tS^{\sharp}}$, and the morphism of monoids
\begin{equation}\label{equ:orbi-smooth-pt}
\bar{\frt}_{c,z}^{\flat} = \bar{\frt}^{\flat}_{b}: \ocM_{C^{\sharp}, z} = \ocM_{S^{\sharp}} \to \ocM_{\tC^{\sharp},z}  = \ocM_{\tS^{\sharp}}. 
\end{equation}

\noindent
{\bf Marked points.}
For a marked point $\sigma_{i} \in \uC$, denote by $\tilde{\sigma}_{i} \in \tilde{\uC}$ the corresponding marked point. Then we have $\ocM_{\tC^{\sharp}, \tilde{\sigma}_{i}} = \ocM_{\tS^{\sharp}} \oplus \NN$ with the copy of $\NN$ corresponding to the divisorial log structure on $\tilde{\uC}$ given by $\tilde{\sigma}_{i}$. We then have
\begin{equation}\label{equ:orbi-mark-pt}
\bar{\frt}^{\flat}_{c} = \bar{\frt}^{\flat}_{b}\oplus\id_{\NN}: \ocM_{C^{\sharp}, \sigma_{i}} = \ocM_{S^{\sharp}}\oplus\NN \to \ocM_{\tC^{\sharp}, \tilde{\sigma}_{i}} = \ocM_{\tS^{\sharp}} \oplus \NN.
\end{equation}

\noindent
{\bf Nodes.}
Consider a node $z \in \tilde{\uC}$ corresponding to an edge $l$. Denote by $\tx$ and $\ty$ the local coordinate functions on the two components of $\tilde{\uC}$ meeting at $z$. Then we have the canonical local lifts $\tlx$ and $\tly$ in $\cM_{\tC^{\sharp},z}$. On the characteristic level, we have 
\[
\ocM_{\tC^{\sharp},z} = \ocM_{\tS^{\sharp}}\oplus_{\NN}\NN^{2}
\]
with the morphism $\NN \to \NN^{2}$ on the right given by $1 \mapsto \lx + \ly$, and $\NN \to \ocM_{\tS^{\sharp}}$ given by $1 \mapsto \tilde{e}_{l}$. Over the node $z$, consider the morphism of monoids
\[
\bar{\frt}^{\flat}_{c, z}: \ocM_{S^{\sharp}}\oplus_{\NN}\NN^{2} \to \ocM_{\tS^{\sharp}}\oplus_{\NN}\NN^{2}.
\]
If the edge $l$ has an even contact order, then we have 
\begin{equation}\label{equ:even-node-char}
\bar{\frt}^{\flat}_{c, z} = \bar{\frt}_{b}\oplus \id_{\NN^2}
\end{equation} 
which induces an isomorphism on the $\NN^2$ summand. 
If the edge $l$ has an odd contact order, then we have 
\begin{equation}\label{equ:odd-node-char}
\bar{\frt}^{\flat}_{c, z} = \bar{\frt}_{b}\oplus \bar{\nu}_{2}
\end{equation}
with the morphism $\bar{\nu}_{2}: \NN^2 \to \NN^2$ given by $(1,1) \mapsto (2,2)$. Note that such a morphism on the level of log structures is compatible with the underlying structure in (\ref{equ:odd-node-map}). 

%%%%%%%%%%%%%%%%%%%%%
\subsection{The orbifold lift of $\eta$} \label{ss:orbifold-lift}
%%%%%%%%%%%%%%%%%%%%%

Consider the following cartesian diagram of log schemes
\begin{equation}\label{equ:changeto-orbi-base}
\xymatrix{
\tS \ar[r] \ar[d] & S \ar[d] \\
\tS^{\sharp} \ar[r] & S^{\sharp}.
}
\end{equation}

\begin{lemma}\label{lem:log-produce-surj}
The underlying morphism of $\tS \to \tS^{\sharp}$ is finite and surjective. In particular, $\tS \to S$ is finite and surjective. 
\end{lemma}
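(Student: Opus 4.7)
The approach is to recognize $\frt_b \colon \tS^\sharp \to S^\sharp$ as a Kummer morphism of log schemes and then invoke the standard local description of its fs base change. From the explicit formulas just after diagram \eqref{equ:orbi-log}, the underlying morphism of $\frt_b$ is the identity on $\uS$, while the induced map on characteristic sheaves $\bar{\frt}^\flat_b \colon \ocM_{S^\sharp} = \NN^k \to \ocM_{\tS^\sharp} = \NN^k$ sends $e_l \mapsto \tilde{e}_l$ when the edge $l$ has even contact order and $e_l \mapsto 2\tilde{e}_l$ when $l$ has odd contact order. This map is injective, and its groupification $\ZZ^k \to \ZZ^k$ has cokernel $(\ZZ/2\ZZ)^{k_1}$, where $k_1$ is the number of odd-contact edges; hence $\bar{\frt}^\flat_b$ is Kummer, with index $2^{k_1}$ invertible in our ground field of characteristic zero.

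Next I would compute $\tS$ locally via charts. \'Etale locally around a point $s \in \uS$, one can choose a chart $P := \NN^k \to \cM_{S^\sharp}$ of the basic log structure, together with a compatible chart $Q := \NN^k \to \cM_{\tS^\sharp}$ such that the induced map $P \to Q$ is the Kummer map $\bar{\frt}^\flat_b$ above (cf.\ Section \ref{ss:orbi-log-curve}). By the standard description of fs fiber products in terms of such charts (see \cite{KKato, log-survey}), the underlying scheme of $\tS$ is given \'etale locally by the ordinary scheme-theoretic fiber product
\[
\underline{\tS} \cong \uS \times_{\Spec \ZZ[P]} \Spec \ZZ[Q].
\]

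It then suffices to check that $\Spec \ZZ[Q] \to \Spec \ZZ[P]$ is itself finite and surjective, since both properties are preserved under arbitrary base change. Up to reindexing, $P \cong \NN^{k_2} \oplus (2\NN)^{k_1}$ embeds in $Q \cong \NN^{k_2} \oplus \NN^{k_1}$, so $\ZZ[Q]$ is a free $\ZZ[P]$-module of rank $2^{k_1}$ with basis the square-free monomials $\prod_{l \in T}\tilde{e}_l$ indexed by subsets $T$ of the odd-contact edges; this gives finiteness, while faithful flatness of this free extension yields surjectivity on spectra. Pulling back along $\uS \to \Spec \ZZ[P]$ shows that $\underline{\tS} \to \uS$ is finite and surjective, and since the underlying morphism of $S \to S^\sharp$ is the identity on $\uS$, the same holds for $\underline{\tS} \to \underline{S}$. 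The main subtlety is that $S \to S^\sharp$ need not be strict, but the Kummer structure of $\frt_b$ is stable under arbitrary base change in fs log schemes, reducing the underlying computation to the explicit finite free ring extension above.
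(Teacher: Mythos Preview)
Your chart computation misidentifies the underlying scheme of the fs fiber product. The formula $\underline{\tS} \cong \uS \times_{\Spec \ZZ[P]} \Spec \ZZ[Q]$ uses only charts for $S^{\sharp}$ and $\tS^{\sharp}$ and never sees the log structure on $S$; but $\tS = S \times_{S^{\sharp}} \tS^{\sharp}$ genuinely depends on $\cM_S$. A clean counterexample: take $S = S^{\sharp}$, so the square \eqref{equ:changeto-orbi-base} is trivially cartesian and $\tS = \tS^{\sharp}$, hence $\underline{\tS} = \uS$ is a single reduced point. Your formula instead gives $\Spec\bigl(k \otimes_{\ZZ[P]} \ZZ[Q]\bigr) \cong \Spec k[\epsilon_l : c_l \text{ odd}]/(\epsilon_l^2)$, a non-reduced point of length $2^{k_1}$. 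Your closing remark does flag that $S \to S^{\sharp}$ need not be strict, but the claim that Kummer stability ``reduces'' everything to the extension $\ZZ[P] \hookrightarrow \ZZ[Q]$ is exactly what fails: after base change the relevant monoid extension is $M \hookrightarrow (M \oplus_P Q)^{\mathrm{sat}}$ for a chart $M$ of $\cM_S$, and $\ZZ[(M \oplus_P Q)^{\mathrm{sat}}]$ is in general not free over $\ZZ[M]$ and not computed by your formula.

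The hands-on route can be repaired: the correct local model is $\underline{\tS} \cong \uS \times_{\Spec \ZZ[N]} \Spec \ZZ[N^{\mathrm{sat}}]$ with $N = M \oplus_P Q$ the integral pushout (already integral since $P \to Q$ is integral), and then one argues that $\ZZ[N^{\mathrm{sat}}]$ is finite over $\ZZ[N]$ with a surjective map on spectra. The paper's own argument sidesteps the explicit chart bookkeeping entirely: it notes that $\frt_b$ is integral in Kato's sense (Kummer implies integral, so your observation is the right input), and then invokes two general results of Ogus --- one ensuring the fine fiber product is nonempty, the other giving finiteness and surjectivity of the saturation step --- to conclude.
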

\begin{proof}
Note that the morphism $\tS^{\sharp} \to S^{\sharp}$ is integral, see \cite[Definition (4.3)]{KKato}. Hence by \cite[Proposition 2.4.2]{Ogus}, the fiber product $S\times_{S^{\sharp}}\tS^{\sharp}$ in the category of fine log schemes is non-empty. The statement then follows from \cite[Proposition 2.4.5 (2)]{Ogus}. 
\end{proof}

Since $\uS$ is a geometric point, the underlying scheme $\tilde{\uS}$ is possibly a disjoint union of several copies of $\uS$. It is useful to notice that for each component $\tS_{i} \subset \tS$, the characteristic monoid 
\[
\ocM_{\tS_{i}} = \ocM_{\tS^{\sharp}}\oplus_{\ocM_{S^{\sharp}}}\ocM_{S}
\]
in the category of fine and saturated sharp monoids is identical to each other by the cartesian diagram \eqref{equ:changeto-orbi-base}. We also observe that 

\begin{lemma}\label{lem:first-base-change}
The morphism $\ocM_{S}^{gp} \to \ocM_{\tS_{i}}^{gp}$ is injective with torsion cokernel.
\end{lemma}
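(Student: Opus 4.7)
The plan is to identify $\ocM_{\tS_i}$ explicitly as the fs pushout $\ocM_{\tS^{\sharp}}\oplus^{fs}_{\ocM_{S^{\sharp}}}\ocM_{S}$ dictated by the cartesian square \eqref{equ:changeto-orbi-base}, and then to read off the group-level statement from a presentation in terms of the weighted graph $G$.

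First I would recall from Section \ref{ss:orbi-log-curve} that $\bar{\frt}_{b}^{\flat}\colon \ocM_{S^{\sharp}}\to \ocM_{\tS^{\sharp}}$ is the diagonal homomorphism $\NN^{k}\to\NN^{k}$ sending the smoothing parameter $e_{l}$ to $\tilde{e}_{l}$ if the edge $l$ has even contact order, and to $2\tilde{e}_{l}$ otherwise. In particular $\bar{\frt}_{b}^{\flat}$ is injective on groupifications with cokernel $(\ZZ/2)^{k_{1}}$, where $k_{1}$ is the number of odd-contact nodes. Next, pushing out the map $\ocM_{S^{\sharp}}\to \ocM_{S}$ along $\bar{\frt}_{b}^{\flat}$ and using the description $\ocM_{S}=\cM(G)$ from Section \ref{ss:minimality}, the resulting fs monoid admits a presentation by the degeneracies $e_{v}$ with $v\in V^{d}(G)$, the even smoothing elements $e_{l}$, and new generators $\tilde{e}_{l}$ at each odd edge, subject to the edge relations from $\cM(G)$ together with $e_{l}=2\tilde{e}_{l}$ at each odd edge. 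This exhibits $\ocM_{\tS_{i}}$ as the extension of $\ocM_{S}$ obtained by adjoining a square root of every odd smoothing element.

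Passing to groupifications, we then have $\ocM_{\tS_{i}}^{gp}=\ocM_{S}^{gp}[\tilde{e}_{l}\colon l \text{ odd}]$ with the only new relations $2\tilde{e}_{l}=e_{l}$. The inclusion $\ocM_{S}^{gp}\hookrightarrow \ocM_{\tS_{i}}^{gp}$ is therefore manifestly injective with cokernel $(\ZZ/2)^{k_{1}}$, which is torsion, as required.

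The main technical obstacle is to justify that the presentation above really computes the fs pushout, i.e., that the naive presentation is already fine, saturated, and sharp and that no further fs-ification collapses a portion of $\ocM_{S}^{gp}$. I would handle this either by invoking the general principle that Kummer homomorphisms of fs monoids (injective with torsion cokernel on groupifications) are stable under cobase change in the fs category, applied to the Kummer map $\bar{\frt}_{b}^{\flat}$, or by a direct check that the presented monoid has no non-trivial invertibles, relying on the sharpness of $\ocM_{\tS^{\sharp}}$ and $\ocM_{S}$ and the injectivity of $\bar{\frt}_{b}^{\flat}$ on groupifications.
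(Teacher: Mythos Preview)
The paper states this lemma without proof, treating it as an immediate observation following the pushout description of $\ocM_{\tS_i}$ given just before it. Your argument is correct and supplies the omitted details. The cleanest route is the Kummer--stability one you sketch at the end: since $\bar{\frt}_b^{\flat}$ is injective on groupifications with cokernel $(\ZZ/2)^{k_1}$, the group--level pushout inherits an injection $\ocM_S^{gp}\hookrightarrow P^{gp}$ with the same cokernel, and sharpening only kills units, which are torsion and hence cannot meet the torsion--free group $\ocM_S^{gp}$.

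One small correction to your alternative ``direct check'': the fs pushout \emph{can} have nontrivial invertibles --- this occurs whenever some odd smoothing element $e_l$ is already $2$--divisible in $\ocM_S^{gp}$, producing a $2$--torsion unit $\tilde{e}_l-\tfrac{1}{2}e_l$. So the cokernel after sharpening may be a proper quotient of $(\ZZ/2)^{k_1}$ rather than the full group. This does not affect the lemma, which only asserts a torsion cokernel, and injectivity survives for the reason just given: any such units are torsion, while $\ocM_S^{gp}$, being the groupification of a sharp fs monoid, is torsion--free.
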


Now consider the fiber product of log stacks
\begin{equation*}%\label{diag:orbi-log-curve}
\xymatrix{
\tC \ar[r] \ar[d]^{\tpi} & \tC^{\sharp} \ar[d]^{\tpi^{\sharp}} \\
\tS \ar[r] & \tS^{\sharp}.
}
\end{equation*}

Note that the underlying stack of $\tC$ is the orbifold curve $\tilde{\uC}$ described in Section \ref{ss:orbi-curve}. Consider the corresponding dualizing line bundles $\omega_{\tilde{C}}$ and $\omega_{C}$ over $\tilde{\uC}$ and $\uC$ respectively. Let $\cN'_{\omega_{\tilde{C}}}$ and $\cN'_{\omega_{C}}$ be the divisorial log structures on the total spaces of $\tilde{\uC}$ and $\uC$ associated to the zero sections respectively. Denote by $\omega_{\tilde{C}}' = (\omega_{\tilde{C}}, \cN'_{\omega_{\tilde{C}}})$ and $\omega_{C}' = (\omega_{C}, \cN'_{\omega_{C}})$ the corresponding log schemes.  

We also consider $\cN_{\omega_{\tilde{C}}}$ and $\cN_{\omega_{C}}$ the log structures on $\omega_{\tilde{C}}$ and $\omega_{C}$ obtained by pulling back the log structures of $\tC$ and $C$ respectively. Denote by $\tilde{L} = (\omega_{\tilde{C}}, \cN_{\omega_{\tilde{C}}}\oplus_{\cO^*}\cN'_{\omega_{\tilde{C}}})$ and $L = (\omega_{C}, \cN_{\omega_{C}}\oplus_{\cO^*}\cN'_{\omega_{C}})$. 

Since $\frt_c^*\omega_{C} = \omega_{\tilde{C}}$, the morphism 
\begin{equation*}%\label{equ:target-twist}
\omega_{\tilde{C}}' \to  \omega_{C}' 
\end{equation*}
is strict.  Now consider the composition
\[
\tC \stackrel{}{\longrightarrow} C \stackrel{\eta'}{\longrightarrow} L'.
\]
It fits in a commutative diagram of solid arrows
\begin{equation*}%\label{equ:twist-diff}
\xymatrix{
\tC \ar@/^/[rrd] \ar@{-->}[rd]^{\xi'} \ar@/_/[ddr]&& \\
 & \tilde{L}' \ar[r] \ar[d] &  L'  \ar[d] \\
 & \underline{\tC} \ar[r] & \uC.
}
\end{equation*}
where the square is cartesian. We thus obtain a morphism 
\[
\xi': \tC \to \tL'
\]
making the above diagram commutative. Define 
\[
\xi: \tC \to \tL
\]
whose underlying structure is the same as that of $\xi'$, and on the level of log structures
\[
\xi^{\flat} := \id_{\cN_{\tL}}\oplus(\xi')^{\flat}: \cN_{\tL}\oplus_{\cO^*}\cN'_{\tL} \to \cN_{\tL}.
\]
Then we have the following commutative diagram with a cartesian square given by the solid arrows
\begin{equation}\label{equ:orbi-pull-back}
\xymatrix{
\tL \ar[r] \ar[d] & L \ar[d] \\
\tC \ar[r] \ar@/^1pc/@{-->}[u]^{\xi} & C  \ar@/_1pc/@{-->}[u]_{\eta}. 
}
\end{equation}
We make the following observation. 

\begin{lemma}\label{lem:orbi-base-change}
The morphism $\xi$ is the pull-back of $\eta$, and it makes the diagram \eqref{equ:orbi-pull-back} commutative. 
\end{lemma}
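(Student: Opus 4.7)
The strategy is to exploit the decomposition $\cM_{L} = \cN'_{L}\oplus_{\cO^*}\cN_{L}$ from \eqref{eq:log-lift} together with the characterization of $\eta$ coming from Lemma~\ref{lem:sec-map}. Under this decomposition the log morphism $\eta^{\flat}$ takes the form
\[
\eta^{\flat} \;=\; \id_{\cN_{L}}\oplus (\eta')^{\flat}\colon \cN_{L}\oplus_{\cO^*}\cN'_{L}\;\longrightarrow\;\cM_{C},
\]
because $\psi\circ\eta=\id_{C}$ forces the $\cN_{L}$-factor to be the identity, while the $\cN'_{L}$-factor is by construction $(\eta')^{\flat}$. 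Since the log structure on $\tL$ was defined in the same way as $\cN_{\tL}\oplus_{\cO^*}\cN'_{\tL}$, and since the formation of this amalgamated sum commutes with pullback of log structures along $\tC\to C$, one sees that the pullback of $\eta$ along $\tC\to C$ must have exactly the form $\id_{\cN_{\tL}}\oplus (\xi')^{\flat}$, which is how $\xi$ was defined. The proposal is to make these two observations precise in turn.

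First, I would verify the underlying equality. Both $\underline{\xi}$ and the underlying of the pullback of $\eta$ land in $\underline{\tL}=\omega_{\tilde C}$ (which equals $\underline{\tL'}$), so the claim reduces to $\underline{\xi}=\underline{\xi'}$ as set-theoretic sections, which is built into the definition of $\xi$. Since $\xi'$ is by construction the pullback of $\eta'$ in the cartesian square involving $\tL'$ and $L'$, and $\underline{\eta}=\underline{\eta'}$, the underlying morphisms match.

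Second, I would check the log-structure morphism. Here the key point is that the cartesian diagram defining $\tL$ in \eqref{equ:orbi-pull-back} induces, upon taking sections, a natural isomorphism
\[
(\tC\to C)^{*}\bigl(\cN_{L}\oplus_{\cO^*}\cN'_{L}\bigr) \;\cong\;\cN_{\tL}\oplus_{\cO^*}\cN'_{\tL},
\]
because both summands pull back strictly (the log structure $\cN_{L}=\psi^{*}\cM_{C}$ pulls back by functoriality, and the divisorial log structure $\cN'_{L}$ pulls back to $\cN'_{\tL}$ since the morphism $\omega'_{\tilde C}\to \omega'_{C}$ is strict, as noted just before \eqref{equ:orbi-pull-back}). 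Under this identification the pullback of $\eta^{\flat}=\id_{\cN_{L}}\oplus (\eta')^{\flat}$ is $\id_{\cN_{\tL}}\oplus (\xi')^{\flat}$, which matches $\xi^{\flat}$ by the definition of $\xi$.

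Once these two compatibilities are established, the diagram commutativity is automatic: the lower square in \eqref{equ:orbi-pull-back} is cartesian by construction, and $\xi$ is the unique lift of $\id_{\tC}$ along $\tpsi\colon\tL\to\tC$ induced by the section $\eta$ via the universal property, so $\tpsi\circ\xi=\id_{\tC}$ and the outer rectangle with $\eta$ and $\xi$ inserted commutes tautologically. I do not anticipate a serious obstacle here; the only bookkeeping point requiring attention is the strictness of $\omega'_{\tilde C}\to\omega'_{C}$, which ensures that the divisorial log structure on $\tL'$ really is the pullback of that on $L'$ and hence that the amalgamated-sum decomposition is preserved under pullback.
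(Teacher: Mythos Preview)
Your proposal is correct. In the paper this lemma is stated as a bare observation with no proof given; the authors evidently regard it as immediate from the construction of $\xi$ just preceding the statement. Your write-up is precisely the unpacking one would supply: the decomposition $\eta^{\flat}=\id_{\cN_L}\oplus(\eta')^{\flat}$ forced by $\psi\circ\eta=\id_C$, the strictness of $\omega'_{\tilde C}\to\omega'_C$ ensuring that the amalgamated-sum log structure pulls back summand-by-summand, and the matching of $\xi^{\flat}$ with the pulled-back $\eta^{\flat}$ via the definition $\xi^{\flat}=\id_{\cN_{\tL}}\oplus(\xi')^{\flat}$. There is nothing to correct; your argument simply makes explicit what the paper leaves tacit.
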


We call $\xi$ the {\em orbifold lift} of $\eta$.

%%%%%%%%%%%%%%%%%%%%%%%%%%
\subsection{Spin bundles of the orbifold lifts}\label{ss:spin-bundle}
%%%%%%%%%%%%%%%%%%%%%%%%%%%

Note that $\tS$ is possibly a disjoint union of several log points. We consider a further base change:

\begin{equation}\label{equ:auxiliary-base-change}
\tS_{1} \to \tS
\end{equation}
with the properties that 
\begin{enumerate}
 \item $\tilde{\uS}_{1} = \uS$, and the underlying morphism of $\tS_{1} \to \tS$ is an inclusion of point;  
 \item Let $e'_{v}$ be the image of the degeneracy $e_{v}$ of each vertex $v \in V(G)$ via the composition 
 \[
 \ocM(G) \to \ocM_{S} \to \ocM_{\tS} \to \ocM_{\tS_{1}}.
 \]
 Then $\frac{1}{2}\cdot e'_{v} \in \ocM_{\tS_{1}}$. 
\end{enumerate}

Such a base change exists as charts exist over a geometric point, and a refinement of the lattice structure will yield (2). Note that the second property is to address Assumption \ref{assu:spin} (2) in the general situation.  

Same as in (\ref{equ:DF}), we have a morphism of sheaves of monoids
\[
\bar{\theta}: \NN \to \ocN'_{\tL}
\]
which lifts to a chart \'etale locally. Here we view $\NN$ as the global constant sheaf of monoids with coefficients in $\NN$. Consider the following commutative diagram
\begin{equation}\label{diag:spin-torsor}
\xymatrix{
                                       & \cN'_{\tL} \ar[r]^{(\xi')^{\flat}} \ar[d] & \cM_{\tilde{C}} \ar[d]_{\frq} \\
 \NN \ar[r]^{\bar{\theta}}     & \ocN'_{\tL} \ar[r]^{(\bar{\xi}')^{\flat}} & \ocM_{\tilde{C}}. 
}
\end{equation}
Consider the element $\bar{\delta} = (\bar{\xi}')^{\flat}\circ\bar{\theta}(1)$. We observe the following. 

\begin{lemma}\label{lem:divide-section}
In the above setting, we have $\frac{1}{2}\cdot\bar{\delta} \in \ocM_{\tilde{C}}$.
\end{lemma}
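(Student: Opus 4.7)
The plan is to verify the divisibility stalk by stalk on $\tilde{C}$, using the local descriptions of the log twisted differential from Section~\ref{ss:local-structure} together with the orbifold pullback formulas \eqref{equ:orbi-smooth-pt}--\eqref{equ:odd-node-char} and the divisibility property built into the base change \eqref{equ:auxiliary-base-change}. Since $\frac{1}{2}\bar{\delta}\in\ocM_{\tilde{C}}^{gp}$ makes sense in the saturated setting, it suffices to show that at every geometric point $\tz\in\tilde{\uC}$, the element $\bar{\delta}_{\tz}$ lies in $2\cdot \ocM_{\tilde{C},\tz}$. By construction, $\bar{\xi}'$ is the pullback of $\bar{\eta}'$ along $\frt_{c}$ composed with the base change to $\tS_{1}$, and the monoid $\ocM_{\tilde{C},\tz}$ is saturated, so it is enough to write $\bar{\delta}_{\tz}$ as a sum of elements each divisible by $2$.

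Concretely I would split into the four standard cases. At a smooth unmarked point $\tz$ lying over a component $Z$, equation \eqref{equ:general-pt} combined with \eqref{equ:orbi-smooth-pt} gives $\bar{\delta}_{\tz}$ equal to the image $e'_{Z}$ of $e_{Z}$ in $\ocM_{\tS_{1}}$, which is divisible by $2$ by the defining property of the base change $\tS_{1}\to\tS$. At a marked point $\tilde{\sigma}_{i}$, \eqref{equ:marked-pt} and \eqref{equ:orbi-mark-pt} yield $\bar{\delta}_{\tilde{\sigma}_{i}}=e'_{Z}+m_{i}\cdot\bar{\delta}_{i}^{\dagger}$; here $e'_{Z}$ is divisible by $2$ as above, and $m_{i}$ is even by hypothesis, so the whole expression lies in $2\cdot\ocM_{\tilde{C},\tilde{\sigma}_{i}}$.

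The two node cases are where the orbifolding enters. If $\tz$ lies over a node with even contact order $c_{l}$, formula \eqref{equ:node} together with \eqref{equ:even-node-char} gives $\bar{\delta}_{\tz}=e'_{Z_{1}}+c_{l}\cdot\tilde{\bar{x}}^{\dagger}$; both summands are even, the first by the base change and the second because $c_{l}$ is even. At a node with odd contact order $c_{l}$, the key input is \eqref{equ:odd-node-char}: the map $\bar{\nu}_{2}$ sends the local parameter $\bar{x}^{\dagger}$ of the coarse node to $2\cdot\tilde{\bar{x}}^{\dagger}$. Combining this with \eqref{equ:node} produces
\[
\bar{\delta}_{\tz}=e'_{Z_{1}}+c_{l}\cdot(2\,\tilde{\bar{x}}^{\dagger})=e'_{Z_{1}}+2c_{l}\cdot\tilde{\bar{x}}^{\dagger},
\]
which is again divisible by $2$: the first summand by the base change, the second visibly so, with oddness of $c_{l}$ absorbed into the doubled orbifold coordinate.

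The only genuine obstacle is checking that these local divisibilities assemble to a single global section $\tfrac{1}{2}\bar{\delta}\in\ocM_{\tilde{C}}$ rather than just a local one. This follows because $\bar{\delta}$ is already a global section, the sheaf $\ocM_{\tilde{C}}$ is sharp and saturated, and the map $e\mapsto 2e$ is an injection on each stalk; hence the locally defined halves glue uniquely. The odd-node case is the conceptual heart of the argument, and it is precisely the reason the orbifold modification of Sections~\ref{ss:orbi-curve}--\ref{ss:orbi-log-curve} was introduced: the doubling inherent in $\bar{\nu}_{2}$ converts an odd contact order into an even contribution, restoring divisibility by $2$ after the auxiliary base change \eqref{equ:auxiliary-base-change}.
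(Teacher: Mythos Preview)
Your proof is correct and follows essentially the same approach as the paper: both arguments verify divisibility stalkwise, using the base change \eqref{equ:auxiliary-base-change} to halve the degeneracies and the orbifold doubling \eqref{equ:odd-node-char} to handle the odd-contact nodes. The paper is simply terser, treating the smooth, marked, and even-node cases in a single sentence (``away from the orbifold nodes'') rather than spelling them out individually, and it omits your gluing paragraph as implicit.
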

\begin{proof}
Note that $\xi'$ is the pull-back of $\eta'$. By the second property of the base change (\ref{equ:auxiliary-base-change}), away from the orbifold nodes, the element $\frac{1}{2}\cdot\bar{\delta}$ lies in $\ocM_{\tilde{C}}$.

Now consider an orbifold node $z\in \tC$ corresponding to an edge $l \in V(G)$ joining two vertices $v_{1}$ and $v_{2}$ with $v_{1} \leq_{\min} v_{2}$ and contact order $c_l$. By Lemma \ref{lem:orbi-base-change}, \eqref{equ:odd-node-char}, and \eqref{equ:orbi-pull-back}, we have 
\begin{equation*}%\label{equ:orbi-diff-odd}
(\bar{\xi}')^{\flat}_{z}: \NN \to \ocM_{\tS}\oplus_{\NN}\NN^2, \ \ \ 1 \mapsto e_{v_{1}} + 2\cdot c_l\cdot \lx,
\end{equation*} 
where $e_{v_1}$ is the image of the degeneracy. Since $\frac{e_{v_1}}{2} \in \ocM_{\tC}$, we have $\frac{1}{2}\cdot\bar{\delta} \in \ocM_{\tilde{C}}$ at the orbifold nodes as well.
\end{proof}

Consider the inverse image
\begin{equation}\label{equ:torsor-root}
\cT = \frq^{-1}(\frac{1}{2}\cdot\bar{\delta})\in  \cM_{\tilde{C}}. 
\end{equation}
It is an $\cO^*$-torsor over $\tC$. Denote again by $E$ the line bundle corresponding to $\cT$. By construction, we have
\begin{equation}\label{equ:spin-bundle}
E^{\otimes 2} \cong \omega_{\tilde{C}}^{\vee}.
\end{equation}
We call $E^{\vee}$ the {\em spin bundle} over $\underline{\tC}$ associated to $\xi$. The spin parity of $\xi$ is even (resp. odd) if $H^0(E^{\vee})$ is even (resp. odd). 

\begin{remark}
The curve $\tilde{\uC}$ with the spin bundle $E^{\vee}$ is called a twisted $2$-spin curve in \cite[Section 1.4]{AJ}.
\end{remark}

%%%%%%%%%%%%%%%%%%%%%%%%
\subsection{Independence of the spin parity}
%%%%%%%%%%%%%%%%%%%%%
\begin{proposition-definition}\label{prop-def:spin}
Notations as in Section \ref{ss:spin-bundle}, the spin parity of $\xi$ only depends on the minimal object $\eta_{min}$ associated to $\eta$. We thus define the spin parity of $\eta$ to be the spin parity of $\xi$.
\end{proposition-definition}

The proof of the proposition will occupy this section. For simplicity, we may assume that $\eta$ is minimal. We first observe the following.  

\begin{lemma}
The spin bundle $E^{\vee}$ is representable.
\end{lemma}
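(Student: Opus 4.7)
The plan is to show that $E^{\vee}$ is representable by verifying that, at each stacky point of $\tilde{\uC}$, the automorphism group $\mu_{2}$ acts on the fiber of $E^{\vee}$ via the nontrivial character. Recall that a line bundle on a Deligne–Mumford stack has representable total space precisely when, at every geometric point, the stabilizer acts faithfully on the fiber; for a $\mu_{2}$-gerbe this is the same as requiring the induced $\mu_{2}$-representation on the line to be nontrivial. Since $\tilde{\uC} \to \uC$ is an isomorphism away from the orbifold nodes, we only need to check this at each orbifold node, which by construction corresponds to an edge $l \in E(G)$ of odd contact order $c_{l}$.

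Fix such a node $z \in \tC$ sitting over the node of $\uC$ corresponding to $l$, joining vertices $v_{1} \leq_{\min} v_{2}$. By the computation carried out in the proof of Lemma~\ref{lem:divide-section}, we have
\[
\tfrac{1}{2}\cdot\bar{\delta} \;=\; \tfrac{1}{2}\,e_{v_{1}} + c_{l}\cdot\bar{\tlx} \quad \text{in } \ocM_{\tC, z},
\]
where $\bar{\tlx}$ denotes the image of $\tlx$ in the characteristic sheaf. Choose a chart $\beta$ over $\tS_{1}$ (which exists by the base change~\eqref{equ:auxiliary-base-change}) and lift to the section
\[
\tilde{s} \;=\; \beta\bigl(\tfrac{1}{2}\,e_{v_{1}}\bigr) + c_{l}\cdot\tlx \;\in\; \cM_{\tC, z},
\]
which gives a local generator of the torsor $\cT = \frq^{-1}\bigl(\tfrac{1}{2}\bar{\delta}\bigr)$ near $z$, and hence a local trivialization of $E$.

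By the local description of $\tC$ in Section~\ref{ss:orbi-curve}, the group $\mu_{2} = \{1, \zeta\}$ acts at $z$ by $\zeta\cdot\tx = \zeta\tx$, $\zeta\cdot\ty = \zeta\ty$, and the induced action on the log structure sends the lift $\tlx \mapsto \zeta\cdot\tlx$ (viewing $\zeta \in \cO^{*} \subset \cM_{\tC}$). The contribution $\beta\bigl(\tfrac{1}{2}e_{v_{1}}\bigr)$ is pulled back from the base $\tS_{1}$ and is therefore $\mu_{2}$-invariant, while $c_{l}\cdot\tlx$ transforms by $\zeta^{c_{l}}$. Altogether,
\[
\zeta\cdot \tilde{s} \;=\; \tilde{s} \cdot \zeta^{c_{l}}.
\]
Because $c_{l}$ is odd, $\zeta^{c_{l}} = \zeta$, so $\mu_{2}$ acts on the fiber of $E$ at $z$ through the nontrivial character. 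The same holds for $E^{\vee}$. Since this is true at every stacky point, $E^{\vee}$ has representable total space, as desired.

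The main subtlety is verifying that the local computation is independent of the choice of chart $\beta$ and of lift $\tilde{s}$: any two choices of lift differ by an element of $\cO^{*}$, which is acted on trivially by $\mu_{2}$ because the constants are $\mu_{2}$-invariant, so the character computed above is intrinsic. A useful sanity check is that $E^{\otimes 2} \cong \omega_{\tilde{C}}^{\vee}$ by~\eqref{equ:spin-bundle}, and $\omega_{\tilde{C}}$ at the orbifold node is generated by $d\tx/\tx$, which is $\mu_{2}$-invariant; thus $E^{\otimes 2}$ has trivial character, consistent with $E$ having the nontrivial character (as $\zeta^{2} = 1$).
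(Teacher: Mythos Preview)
Your proof is correct and follows the same approach as the paper: the paper's proof is a single sentence noting that $\mu_{2}$ acts faithfully on $\cT$ at each orbifold node because the contact order $c_{l}$ is odd, and you have simply unpacked this by writing out the local trivialization and computing the character explicitly as $\zeta^{c_{l}}$. The additional sanity checks you include are not needed but do no harm.
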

\begin{proof}
Consider an orbifold node $z \in \tC$ corresponding to an edge $l$. The isotropy group $\mu_{2}$ acts on $\cT$ faithfully as the contact order $c_l$ is odd.
\end{proof}

The above lemma implies that global sections of $E^{\vee}$ vanish along the orbifold nodes of $\underline{\tC}$. Denote by $\cup_{j} \uZ_{j}$
the union of connected components of $\underline{\tC}$, obtained by taking partial normalizations along the orbifold nodes. 

Now consider two different base changes $\tS_{i} \to \tS$ as in \eqref{equ:auxiliary-base-change} for $i=1,2$. Denote by $\xi_{i}: \tC_{i} \to \tL_{i}$ the corresponding orbifold lift of $\eta$ over $\tS_{i}$ for $i=1,2$. Let $\cT_{i}$ be the torsor associated to $\xi_i$ as in \eqref{equ:torsor-root}. To prove Proposition-Definition \ref{prop-def:spin}, we next construct an explicit isomorphism of torsors
\begin{equation}\label{equ:compare-spin}
\cT_{1}|_{\uZ_j} \to \cT_{2}|_{\uZ_j}
\end{equation}
for each connected components $\uZ_j$. 

Choose a chart $\psi: \ocM(G) \to \cM_{S}$, and consider the composition $\psi_i$:
\[
\ocM(G) \to \cM_{S} \to \cM_{\tS_i}.
\]
Fix $j$ and a vertex $v \in V(G)$ such that the corresponding irreducible component $\uZ_v$ of $\underline{\tC}$ is contained in $\uZ_j$. Fix a choice of $\theta_{i,v} \in \cM_{\tS_i}$ such that 
\begin{equation*}%\label{equ:degeneracy-root}
2 \cdot \theta_{i,v} = \psi_i(e_v)
\end{equation*}
where $e_v \in \ocM(G)$ is the degeneracy of $v$. 

Now let $G_j$ be a connected subgraph of $G$ corresponding to $\uZ_j$. For another vertex $v' \in V(G_j)$ corresponding to an irreducible component $\uZ_{v'} \subset \uZ_j$, we choose an oriented path 
\[
\vec{P} = (l_1, \cdots, l_k)
\]
in $G_j$ joining $v$ and $v'$. Note that in $\ocM(G)^{gp}$ we have the relation
\begin{equation*}%\label{equ:path-relation}
e_{v'} = e_v + \sum_{m=1}^{k} c'_{l_m}\cdot e_{l_m}
\end{equation*}
where $c'_{l_m} = c_{l_m}$ if the orientation of $\vec{P}$ is compatible with the orientation of $l_m$, and $c'_{l_m} = - c_{l_m}$ otherwise. Note that for each edge $l_m$, the contact order $c'_{l_m}$ is even. Applying $\psi$ and multiplying both sides by $\frac{1}{2}$ in the above equality, we have 
\begin{equation*}%\label{equ:degeneracy-root}
\frac{1}{2} \cdot \psi_{i}(e_{v'})= \theta_{i,v} + \sum_{m=1}^{k} \frac{1}{2}\cdot c'_{l_m}\cdot \psi_i(e_{l_m}) \in \cM_{\tS_i}.
\end{equation*}
Set $\theta_{i,v'} = \frac{1}{2} \cdot \psi_{i}(e_{v'})$. Since the degeneracy $e_{v'}$ does not depend on the choice of $\vec{P}$, the element $\theta_{i,v'}$ does not depend on the choice of $\vec{P}$ either.

\begin{lemma}\label{lem:componentwise}
Notations and assumptions as above, for each $v' \in G_j$, the correspondence 
\begin{equation}\label{equ:componentwise}
\theta_{1,v'} + u \mapsto \theta_{2,v'} + u
\end{equation}
over the smooth unmarked locus of $\uZ_{v'}$ for $u \in \cO^*_{\uZ_v}$ induces an isomorphism of torsors $\cT_{1}|_{\uZ_{v'}} \to \cT_{2}|{\uZ_{v'}}$.  
\end{lemma}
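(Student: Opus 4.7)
The plan is to verify that the assignment \eqref{equ:componentwise} defines an isomorphism of $\cO^*$-torsors over $\uZ_{v'}$ by checking well-definedness stratum by stratum: first on the open locus of smooth unmarked points of $\uZ_{v'}$, then extending across the markings lying on $\uZ_{v'}$, and finally across the nodes of $\uZ_{v'}$. Equivariance under the $\cO^*$-action is transparent from the formula, so the only real content is to show that, locally on $\uZ_{v'}$, both $\theta_{1,v'} + u$ and $\theta_{2,v'} + u$ are legitimate sections of the respective torsors $\cT_{1}$ and $\cT_{2}$.

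First, at a smooth unmarked point $z \in \uZ_{v'}$, the local description \eqref{equ:general-pt} applied to $\eta'$ and pulled back via \eqref{equ:orbi-pull-back} gives $(\xi'_{i})^{\flat}(\ldelta) = \psi_{i}(e_{v'}) + u$ for some invertible $u$, so that $\bar{\delta}_{i}|_{\uZ_{v'}} = e_{v'}$ in $\ocM_{\tC_{i}}$. Since $2\theta_{i,v'} = \psi_{i}(e_{v'})$ by construction, $\theta_{i,v'}$ is a lift of $\tfrac{1}{2}\bar{\delta}_{i}$ in $\cM_{\tC_{i}}$ along $\frq$, and hence $\theta_{i,v'} + u$ for $u \in \cO^*$ exhausts the fiber of $\cT_{i}$. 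The map \eqref{equ:componentwise} is therefore an isomorphism of $\cO^*$-torsors over the smooth unmarked locus, and it remains to show it extends to the whole component.

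For a marking $\sigma_{k} \in \uZ_{v'}$, the contact order $m_{k}$ is even by assumption. Formula \eqref{equ:marked-pt} applied to $\xi'_{i}$ reads $(\xi'_{i})^{\flat}(\ldelta) = \psi_{i}(e_{v'}) + m_{k} \cdot \delta_{k}^{\dagger} + u$, so $\tfrac{1}{2}\bar{\delta}_{i}$ admits the lift $\theta_{i,v'} + \tfrac{m_{k}}{2} \delta_{k}^{\dagger}$ in $\cM_{\tC_{i}}$, and the extension of \eqref{equ:componentwise} is given by the same shift on both sides. For a node $z \in \uZ_{v'}$ lying in the interior of $\uZ_{j}$ (necessarily non-orbifold, hence of even contact order $c_{z}$), formulas \eqref{equ:node} and \eqref{equ:sec-pole}, combined with \eqref{equ:even-node-char} governing the orbifold pullback, yield $(\xi'_{i})^{\flat}(\ldelta) = \psi_{i}(e_{v'}) + c_{z} \cdot \lx + u$; the element $\theta_{i,v'} + \tfrac{c_{z}}{2}\lx$ is a lift of $\tfrac{1}{2}\bar{\delta}_{i}$ on one branch, and the relation \eqref{equ:node-deg} together with the chosen lifts $\theta_{i,v_{1}}, \theta_{i,v_{2}}$ (which differ by $\tfrac{c_{z}}{2}\psi_{i}(e_{z})$ by the path-independence established before the lemma) shows that the same formula works on the other branch, so \eqref{equ:componentwise} extends compatibly across $z$.

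I expect the main obstacle to be the bookkeeping at the even-contact nodes meeting $\uZ_{v'}$ from inside $\uZ_{j}$: one must reconcile the two local expressions coming from the two branches, using the edge relation \eqref{equ:edge-relation} and the fact that halving is compatible with $\bar{\frt}^{\flat}_{b}$ on even smoothing parameters, to see that the two candidate lifts $\theta_{i,v_{1}} + \tfrac{c_{z}}{2}\lx$ and $\theta_{i,v_{2}} - \tfrac{c_{z}}{2}\ly$ agree in $\cM_{\tC_{i}}$ up to an invertible function. Once this is verified, the assignment \eqref{equ:componentwise} is an $\cO^*$-equivariant bijection of sections that respects restriction to every stratum, hence defines the claimed isomorphism $\cT_{1}|_{\uZ_{v'}} \to \cT_{2}|_{\uZ_{v'}}$.
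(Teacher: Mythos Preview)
Your approach is essentially that of the paper: verify that $\theta_{i,v'}$ lifts $\tfrac{1}{2}\bar\delta$ on the smooth unmarked locus of $\uZ_{v'}$, and then extend across the special points using the explicit local descriptions of $\ocM_{\tC_i}$. The paper's own proof is a two-line version of this, simply citing the local formulas \eqref{equ:orbi-smooth-pt}, \eqref{equ:orbi-mark-pt}, \eqref{equ:even-node-char}, and \eqref{equ:odd-node-char}.

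Two remarks on scope. First, you have omitted the case of an \emph{orbifold} node on $\uZ_{v'}$ (odd contact order $c_z$); the paper does include it via \eqref{equ:odd-node-char}, and the lift there reads $\theta_{i,v'} + c_z\cdot\tlx$ since $\lx\mapsto 2\tlx$ under the orbifold coarsening. This is the same mechanism as in Lemma~\ref{lem:divide-section} and is no harder than the even case, but it should be mentioned. Second, your last two paragraphs, where you reconcile the lifts $\theta_{i,v_1}+\tfrac{c_z}{2}\lx$ and $\theta_{i,v_2}-\tfrac{c_z}{2}\ly$ coming from the two branches, are already the content of the \emph{next} lemma in the paper (the gluing of the componentwise isomorphisms over all of $\uZ_j$). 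The present lemma only asserts the isomorphism over a single irreducible component $\uZ_{v'}$, so at a node joining $\uZ_{v'}$ to a different component you only need the extension along the branch inside $\uZ_{v'}$; the cross-branch compatibility is exactly what the subsequent lemma establishes using \eqref{equ:node-smoothing}. Your argument is correct, but you have front-loaded work that the paper defers.
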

\begin{proof}
Note that over the smooth unmarked locus of $\uZ_{v'}$, the image of $\theta_{1,v'}$ in $\ocM_{\tC_i}$ is $\frac{1}{2}\cdot \bar{\delta}$, see (\ref{diag:spin-torsor}) for the definition of $\bar{\delta}$. The above isomorphism over the smooth unmarked locus extends to $\uZ_{v'}$ by the local description of the orbifold curves as in (\ref{equ:orbi-smooth-pt}), (\ref{equ:orbi-mark-pt}), (\ref{equ:even-node-char}), and (\ref{equ:odd-node-char}).
\end{proof}

\begin{lemma}
The isomorphisms $\cT_{1}|_{\uZ_{v'}} \to \cT_{2}|_{\uZ_{v'}}$ defined in Lemma \ref{lem:componentwise} over each irreducible component can be glued together, and yield an isomorphism (\ref{equ:compare-spin}).
\end{lemma}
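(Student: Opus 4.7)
The plan is to verify the cocycle condition for the component-wise isomorphisms produced by Lemma~\ref{lem:componentwise} at every non-orbifold node of $\uZ_j$, so that they patch to a global isomorphism of $\cO^*$-torsors on $\uZ_j$. Since $\uZ_j$ is a connected component of the partial normalization of $\underline{\tC}$ along the orbifold nodes, two irreducible components inside $\uZ_j$ can only meet at nodes $z$ corresponding to edges $l \in E(G_j)$ with even contact order $c_l$. In particular every such node is non-orbifold, the local description of $\cM_{\tC_i,z}$ is the standard nodal one from Section~\ref{ss:log-curve}, and the local lifts $\tlx, \tly$ are pulled back canonically from $\cM_{\tC^{\sharp}}$. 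Gluing therefore reduces to checking, stalk by stalk at each such $z$, that the maps associated to $v_1$ and $v_2$ agree.

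Next I would pin down two presentations of a local section of $\cT_i$ at such a node $z$ joining $v_1, v_2 \in V(G_j)$. Using the edge relation $e_{v_2} = e_{v_1} + c_l e_l$ with $c_l$ even and $\beta^{\sharp}(e_l) = \tlx + \tly$ from~\eqref{equ:node-smoothing}, the element $\tfrac{1}{2}\bar{\delta} \in \ocM_{\tC_i,z}$ admits two lifts in $\cM_{\tC_i,z}^{gp}$, namely $\theta_{i,v_1} + \tfrac{c_l}{2}\tlx$ and $\theta_{i,v_2} - \tfrac{c_l}{2}\tly$. A local section of $\cT_i$ therefore takes the form $\theta_{i,v_1} + \tfrac{c_l}{2}\tlx + u$ from the $v_1$ side and $\theta_{i,v_2} - \tfrac{c_l}{2}\tly + u'$ from the $v_2$ side, and Lemma~\ref{lem:componentwise} sends these respectively to $\theta_{2,v_1} + \tfrac{c_l}{2}\tlx + u$ and $\theta_{2,v_2} - \tfrac{c_l}{2}\tly + u'$. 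The cocycle condition amounts to the equality of these two images in $\cM_{\tC_2,z}^{gp}$.

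The key calculation would compare the differences $\theta_{i,v_2} - \theta_{i,v_1} = \tfrac{c_l}{2}\psi_i(e_l)$ for $i=1,2$. I would exploit two facts: (a) both $\psi_1$ and $\psi_2$ are obtained from the common chart $\psi: \ocM(G) \to \cM_S$ by composing with the base-change maps $\cM_S \to \cM_{\tS_i}$; and (b) the lifts $\tlx, \tly$ are canonical pull-backs from $\cM_{\tC^{\sharp}}$. In $\cM_{\tC_i,z}$ the pullback of $\psi_i(e_l)$ and the element $\tlx + \tly$ both lift the characteristic $e_l$, so they differ by a unit $\gamma_i \in \cO^*_{\tC_i}$. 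Since $\tC_1$ and $\tC_2$ share the same underlying scheme $\underline{\tC}$ and both $\psi$ and the local lifts are defined prior to the base changes, the units $\gamma_1$ and $\gamma_2$ coincide on $\underline{\tC}$, which forces the two images above to agree.

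The main obstacle is precisely this unit bookkeeping: the agreement must hold up to $\cO^*$ and not merely in the characteristic sheaf, which is the reason the argument relies on identifying $\tC_1$ and $\tC_2$ over the same underlying $\underline{\tC}$ and tracking the common origin of both the chart $\psi$ on $\cM_S$ and the local lifts from $\cM_{\tC^{\sharp}}$. Once the verification is carried out at a single even-contact node, the same argument applies uniformly to all non-orbifold nodes of $\uZ_j$, and patching the component-wise isomorphisms yields the desired isomorphism~\eqref{equ:compare-spin}.
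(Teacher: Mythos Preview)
Your proposal is correct and follows essentially the same route as the paper: reduce to compatibility at the non-orbifold nodes of $\uZ_j$, express a local section of $\cT_i$ near such a node from the two adjacent components, and use the edge relation together with the node relation $e_l = \lx + \ly$ to match the two descriptions. The paper carries this out by rewriting $\theta_{i,v_2}$ as $\theta_{i,v_1} + \tfrac{1}{2}c_l\,\psi_i(e_l)$ via the path through $l$ and then directly invoking~\eqref{equ:node-smoothing}, whereas you make the unit bookkeeping explicit by introducing $\gamma_i$ and arguing $\gamma_1 = \gamma_2$ from the common origin of $\psi$ and of the local lifts $\lx,\ly$ prior to the base changes $\tS_i \to \tS$; this is a valid elaboration of the step the paper leaves implicit.
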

\begin{proof}
It suffices to show that the isomorphisms $\cT_{1}|_{\uZ_{v'}} \to \cT_{2}|_{\uZ_{v'}}$ are compatible at the nodes of $\uZ_{j}$. Let $l$ be an edge corresponding to a node $z \in \uZ_j$. We fix the two local coordinates $x$ and $y$ near the node of the two components $Z_{v_1}$ and $Z_{v_2}$ joining at $z$ respectively. Assume that $v_2\geq_{\min}v_1$ with $c_l \geq 0$, see Section \ref{ss:minimality} for the definition of $\geq_{\min}$. Then by the construction in (\ref{equ:componentwise}), locally near $z$, we obtain an isomorphism of the two different torsors induced by 
\begin{equation}\label{equ:compatible-glue1}
\theta_{1,v_1} + \frac{1}{2}c_l \lx + u \mapsto \theta_{2,v_1} + \frac{1}{2}c_l \lx + u
\end{equation}
over $Z_{v_1}$, and an isomorphism induced by 
\begin{equation}\label{equ:compatible-glue2}
\theta_{1,v_1} + \frac{1}{2}c_l e_l - \frac{1}{2}c_l \ly + u \mapsto \theta_{2,v_1} + \frac{1}{2}c_l e_l - \frac{1}{2}c_l \ly + u
\end{equation}
over $Z_{v_2}$. Here $u$ is an invertible function around $z$, and $\lx$ and $\ly$ are defined as in (\ref{equ:node-smoothing}). Finally, the relation (\ref{equ:node-smoothing}) implies that (\ref{equ:compatible-glue1}) and (\ref{equ:compatible-glue2}) are indeed compatible. Therefore, we obtain the isomorphism (\ref{equ:compare-spin}) as needed.
\end{proof}

This finishes the proof of Proposition-Definition \ref{prop-def:spin}.

%%%%%%%%%%%%%%%%%%%%%%%
\subsection{Deformation invariance of the spin parity}\label{ss:spin-def-inv}
%%%%%%%%%%%%%%%%%%%%%%%%

Next we show that different spin parities do not mix up in a connected family. Recall that we have a partition 
$\mu = (m_i)_{i=1}^n$ of $2g-2$ with each $m_i$ even. 

\begin{lemma}\label{lem:family-orbi-lift}
For each geometric point $s \in \lH(\mu)$, there is a connected \'etale neighborhood $U \to \lH(\mu)$ of the point $s$ with the universal family $\eta: C \to L$, and a surjective morphism of log schemes $f: \tU \to U$ together with a family $\tL \to \tC \to \tU$ and a morphism $\xi: \tC \to \tL$ such that 
\begin{enumerate}
 \item For each connected component $\tU' \subset \tU$, $s\in f(\tU')$; 
 \item The underlying family $\tilde{\uC} \to \underline{\tU}$ is a family of orbifold curves whose stacky loci are given by $\mu_2$-gerbes along the nodes of odd contact orders;
 \item The morphism $\xi: \tC \to \tL$ is the pull-back of $\eta$ as in \eqref{equ:orbi-pull-back}; 
 \item At each geometric point of $\tU$, the second condition of \eqref{equ:auxiliary-base-change} is satisfied.
\end{enumerate}
\end{lemma}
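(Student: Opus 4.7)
The plan is to globalize the pointwise constructions of Sections~\ref{ss:orbi-curve}--\ref{ss:spin-bundle} over an étale neighborhood of $s$, and then adjoin a Kummer log cover to enforce the divisibility condition~(4).

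First I would choose a connected étale neighborhood $U \to \lH(\mu)$ of $s$, sufficiently small that the weighted graph $G$ (dual graph with contact orders, degenerate vs.\ non-degenerate vertices, and minimal partial ordering) is constant along the locally closed stratum passing through $s$, and small enough that $s$ lies in the closure of every locally closed stratum of $\uU$ (for instance by passing to the strict henselization at $s$). This is possible because $\lH(\mu)$ is a Deligne-Mumford stack of finite type by Corollary~\ref{cor:diff-moduli}, and the stratification by the isomorphism class of the characteristic monoid of the minimal log structure is locally finite. Restricting the universal family gives the log twisted differential $\eta: C \to L$ over the family of log curves $C \to U$.

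Second, I would extend the orbifolding procedure of Sections~\ref{ss:orbi-curve} and~\ref{ss:orbi-log-curve} in family. Applying the twisted nodal curve construction of \cite[Definition 4.1.2]{AV} to the family yields a canonical orbifold curve $\tilde{\uC} \to \uU$ whose stacky locus consists of $\mu_{2}$-gerbes along the sections of nodes with odd contact orders; fiberwise compatibility with Section~\ref{ss:orbi-curve} follows from uniqueness of twisted curves (cf.\ \cite[Section 1.2]{AF}). Olsson's theorem \cite[Theorem 3.6]{LogCurve} endows $\tilde{\uC}$ with its canonical log structure, producing a basic log orbifold curve $\tilde{\pi}^{\sharp}: \tilde{C}^{\sharp} \to \tilde{U}^{\sharp}$ together with a morphism $\tilde{U}^{\sharp} \to U^{\sharp}$ whose induced map on characteristic sheaves is described as in Section~\ref{ss:orbi-log-curve}. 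Form the fiber product $\tilde{U}_{0} := U \times_{U^{\sharp}} \tilde{U}^{\sharp}$ in fine saturated log schemes; the family analogue of Lemma~\ref{lem:log-produce-surj} shows that $\tilde{U}_{0} \to U$ is finite and surjective.

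Third, to enforce condition~(4), I would apply a further Kummer log cover $f_{1}: \tilde{U} \to \tilde{U}_{0}$ refining the base monoid so that $\tfrac{1}{2} e_{v} \in \ocM_{\tilde{U}}$ for every $v \in V(G)$. Concretely, $\ocM_{\tilde{U}}$ is the saturation inside $\ocM_{\tilde{U}_{0}}^{\mathrm{gp}} \otimes_{\ZZ} \QQ$ of the submonoid generated by $\ocM_{\tilde{U}_{0}}$ together with $\{\tfrac{1}{2} e_{v}\}_{v \in V(G)}$. Because $G$ is constant along $U$, this construction is uniform and $f_{1}$ is a finite surjective Kummer cover. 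Define $\tilde{C}, \tilde{L} \to \tilde{U}$ and $\xi: \tilde{C} \to \tilde{L}$ via the family analogue of diagram~\eqref{equ:orbi-pull-back}, with $\xi$ the pullback of $\eta$ as in Lemma~\ref{lem:orbi-base-change}; this yields conditions~(2) and~(3). Condition~(1) is then automatic from the choice in the first step: each $\tilde{U}'$ is open and closed in $\tilde{U}$, so $f(\tilde{U}')$ is closed in $\uU$, and by surjectivity of $f$ together with the fact that $s$ lies in the closure of every stratum of $\uU$, each such image contains $s$.

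The main obstacle is the third step: implementing the pointwise lattice refinement~\eqref{equ:auxiliary-base-change} uniformly in family. At a single geometric point it is a purely combinatorial saturation operation, but globally one must check that the resulting log structure is fine and saturated and that $f_{1}$ is Kummer, so that finiteness and surjectivity pass to the underlying schemes. This is handled by constancy of $G$ along $U$, which forces the lattice data $\{e_{v}, e_{l}, c_{l}\}$ to propagate as a trivial family of monoids; the saturation then defines an honest fine saturated log structure refining $\ocM_{\tilde{U}_{0}}$ and the Kummer property follows because only denominators of $2$ are introduced.
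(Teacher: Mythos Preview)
Your overall strategy matches the paper's: build the family of orbifold curves with canonical log structure via Olsson's theory, form the fiber product $\tilde{U}_0 = U\times_{U^{\sharp}}\tilde{U}^{\sharp}$, then perform a further log base change for condition~(4). The paper cites \cite[Theorem 1.9]{LogCurve} to produce the orbifold family together with its canonical log structure over a connected base in one step, but your two-step packaging via \cite{AV} and \cite[Theorem 3.6]{LogCurve} achieves the same thing.

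There is, however, a gap in your third step. You assert that ``$G$ is constant along $U$'' and use this to define the Kummer refinement uniformly, but this is false (and contradicts your own first step, where you only claimed constancy along the locally closed stratum through $s$). On any genuine open neighborhood of $s$ some of the nodes of the fiber over $s$ will smooth at nearby points, so the dual graph and the characteristic sheaf $\ocM_{\tilde{U}_0}$ vary. The correct substitute, which is what the paper invokes, is to shrink $\tilde{U}_0$ (and correspondingly $U$) so that a global \emph{chart} $\beta: P \to \cM_{\tilde{U}_0}$ exists with $P = \ocM_{\tilde{U}_0,s}$; one then refines the single monoid $P$ by adjoining the elements $\tfrac12 e_v$ and saturating, and defines $\tilde{U}$ as the fs log base change along this Kummer map of monoids. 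It is the chart, not constancy of $G$, that makes the lattice refinement uniform across $U$. A secondary point: the strict henselization is not an \'etale neighborhood in the sense of the statement, and your argument for~(1) implicitly uses that $s$ is the unique closed point of $\uU$. The paper handles~(1) differently: it discards those connected components of $\tilde{U}_0$ whose (closed, by finiteness) image in $U$ misses $s$, shrinking $U$ as needed to retain surjectivity.
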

\begin{proof}
Denote by $C^{\sharp} \to U^{\sharp}$ the canonical log curve associated to the underlying curve $\uC \to \uU$. By \cite[Theorem 1.9]{LogCurve}, there is a surjective morphism of connected log schemes $\tU^{\sharp} \to U^{\sharp}$ together with a commutative diagram
\[
\xymatrix{
\tC^{\sharp} \ar[r] \ar[d] & C^{\sharp} \ar[d] \\
\tU^{\sharp} \ar[r]^{f_1} & U^{\sharp}
}
\]
such that $\tC^{\sharp} \to \tU^{\sharp}$ is a family of orbifold curves with its canonical log structure, whose stacky loci are given by $\mu_2$-gerbes along the nodes of odd contact orders.

Consider the cartesian diagram
\[
\xymatrix{
\tU_2 \ar[r] \ar[d] & U \ar[d]\\
\tU^{\sharp} \ar[r]^{f_1} & U^{\sharp}. 
}
\]
By Lemma \ref{lem:log-produce-surj}, the morphism $\tU_2 \to U$ is finite and surjective. Replace $\tU_2$ by the union of its connected components whose images in $U$ contain $s$. The finiteness implies that $\tU_2 \to U$ is still a surjective morphism. 

Further shrinking $\tU_2$ (hence $U$ as well), we can also assume that a chart of $\cM_{\tU_2}$ exists. We can thus replace $\tU_2$ by a base change $\tU \to \tU_2$ whose geometric fiber satisfies the second condition of \eqref{equ:auxiliary-base-change}. This finishes the proof. 
\end{proof}

\begin{proposition}\label{prop:spin-inv}
The spin parity remains constant on every connected family of log twisted differentials.
\end{proposition}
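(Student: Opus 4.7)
The plan is to reduce the deformation invariance of the spin parity to the classical invariance of $h^{0} \pmod 2$ for theta characteristics, in its extension to families of twisted (orbifold) $2$-spin curves due to Jarvis and Abramovich--Jarvis~\cite{AJ}. Since any connected family of log twisted differentials factors through some connected component of $\lH(\mu)$, it suffices to prove that the spin parity defines a locally constant function on the underlying topological space of $\lH(\mu)$.

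Fix a geometric point $s \in \lH(\mu)$ and apply Lemma~\ref{lem:family-orbi-lift} to obtain a connected \'etale neighborhood $U$ of $s$, a surjective morphism of log schemes $f \colon \tU \to U$ (with connected components $\tU_{\alpha}$, each mapping through $s$ by property~(1)), and the family of orbifold lifts $\xi \colon \tC \to \tL$ over $\tU$. Property~(4) of the lemma guarantees that $\tfrac{1}{2} \bar{\delta} \in \ocM_{\tC}$ in a neighborhood of every geometric point of $\tU$, so the preimage $\frq^{-1}(\tfrac{1}{2} \bar{\delta}) \subset \cM_{\tC}$ is an $\cO^{*}$-torsor in families and yields a line bundle $E$ on the orbifold curve $\tilde{\uC} \to \underline{\tU}$ satisfying $E^{\otimes 2} \cong \omega_{\tilde{\uC}/\underline{\tU}}^{\vee}$. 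By Proposition-Definition~\ref{prop-def:spin}, on each geometric fiber $E^{\vee}$ is precisely the spin bundle whose $h^{0} \pmod 2$ computes the spin parity, so $(\tilde{\uC}, E^{\vee}) \to \underline{\tU}$ is a family of twisted $2$-spin curves in the sense of \cite{AJ}.

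Invoking the classical deformation invariance of $h^{0} \pmod 2$ for such families, the spin parity is constant on each connected component $\tU_{\alpha}$. Property~(1) of Lemma~\ref{lem:family-orbi-lift} then says that every $\tU_{\alpha}$ meets the fibre over $s$, so the parity on $f(\tU_{\alpha})$ equals the value at $s$; surjectivity of $f$ now implies that the parity is constant on $U$, giving the desired local constancy on $\lH(\mu)$. The main obstacle, and the only nontrivial work beyond assembling the ingredients already in the paper, is the family-level version of the spin bundle construction: pointwise $E$ was produced via local choices of lifts $\theta_{v}$ of $\tfrac{1}{2} e_{v}$, and one must check that the canonical isomorphisms of $E$ produced in the proof of Proposition-Definition~\ref{prop-def:spin} on overlapping charts glue into a single line bundle on the orbifold curve over $\tU$. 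Once condition~(4) of Lemma~\ref{lem:family-orbi-lift} is in force over $\tU$ (not merely pointwise), this should be routine, after which the invariance itself is a direct appeal to the classical result.
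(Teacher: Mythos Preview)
Your argument is essentially the same as the paper's: apply Lemma~\ref{lem:family-orbi-lift}, build the spin bundle over the family $\underline{\tU}$, and then invoke the deformation invariance of the parity for twisted $2$-spin curves (the paper cites \cite[Lemma~3.2]{Jarvis-Spin} and \cite[Proposition~4.3.1]{AJ} for this step). The family-level construction you flag as the main obstacle is not an issue: since the characteristic monoid is sharp, the element $\tfrac{1}{2}\bar\delta$ is \emph{unique} wherever it exists, so once condition~(4) holds at every geometric point the torsor $\frq^{-1}(\tfrac{1}{2}\bar\delta)\subset\cM_{\tC}$ is intrinsically defined over $\underline{\tU}$ with no choices to glue.
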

\begin{proof}
It suffices to verify the statement for each connected component of $\lH(\mu)$. For each geometric point $s \in \lH(\mu)$, take a family $\tL \to \tC \to \tU$ and a morphism $\xi: \tC \to \tL$ as in Lemma \ref{lem:family-orbi-lift} with a surjective morphism $\tU \to U$. The spin bundle $E^{\vee}$ defined by (\ref{equ:spin-bundle}) induces a family of $2$-spin curves over the underlying $\underline{\tU}$. By \cite[Lemma 3.2]{Jarvis-Spin} and \cite[Proposition 4.3.1]{AJ}, the spin parity of the connected family over $\tU$ is determined by the spin parity over the fiber of $s$. By Proposition-Definition \ref{prop-def:spin}, the spin parity remains the same in a connected neighborhood of $s$. 
\end{proof}

%%%%%%%%%%%%%%%%%%%%%%%%
\subsection{A correspondence to flat geometry}\label{ss:spin-flat}
%%%%%%%%%%%%%%%%%%%%%%%%

It is known that in the Deligne-Mumford compactification, the closures of odd and even spin components may intersect (\cite{ChenDiff, Gendron}). As we have seen that adding a log structure can distinguish the spin parity on the boundary, it is natural to seek a more geometric understanding. Consequently it would give a geometric incarnation of our log structure. One such geometric candidate known to experts is via the \emph{residue slit construction} as in \cite[Section 5]{BCGGM1} (see also \cite{EMZ, BoissyFrame}). Below we briefly explain the idea. 

For a node $q\in \uC$ that glues two branches $q^+\in \uC^+$ and $q^- \in \uC^-$, suppose the underlying twisted differential $\tilde{\eta}$ has a zero of order $k$ at $q^{+}$ and a pole of order $-2-k$ at $q^{-}$. Flat geometric neighborhoods of $q^+$ and $q^-$ consist of $2k+2$ half-disks and $2k+2$ (broken) half-planes, respectively. If $\Res_{q^-}\tilde{\eta} = 0$, then one can remove the half-disks and put in the complement (broken) half-disks of the half-planes (after suitable scaling) such that $q$ can be locally smoothed out. If $\Res_{q^-}\tilde{\eta} \neq 0$, one needs to modify the neighborhood of $q^+$ to match with the residue, and the global residue condition in \cite{BCGGM1} precisely ensures that such modification can be carried out globally. Note that when we put in the (broken) half-disks by translation, there are $k+1$ choices, as there are $k+1$ horizontal lines departing from a zero of order $k$. Different choices can alter the spin parity after smoothing (\cite{EMZ, BCGGM2}). 

We illustrate the above discussion via the following example. Let $\uC$ consist of two components $\uX$ and $\uR$ meeting at two nodes $p$ and $q$, where $X$ is of genus two, $p$ and $q$ are hyperelliptic conjugate in $\uX$, and $\uR\cong \PP^1$ contains a marked point $\sigma$. Consider the twisted differential $\tilde{\eta} = (\eta_X, \eta_R)$ on $\uC$ such that $(\eta_X) = p^{+} + q^{+}$ and $(\eta_R) = 4\sigma - 3 p^{-} - 3q^{-}$. See 
Figures~\ref{fig:H(2)} and~\ref{fig:residue} for flat geometric representations of $\eta_X$ and $\eta_R$, where $p^{-}$ is at the infinity of the upper four half-disks in Figure~\ref{fig:residue}, $q^{-}$ is at the infinity of the lower four half-disks, and $\pm r$ are the residues of $\eta_R$ at $p$ and $q$. 

\begin{figure}[h]
    \centering
    \psfrag{p}{$p^+$}
    \psfrag{q}{$q^+$}
    \psfrag{l}{$l$}
    \psfrag{k}{$k$}
    \includegraphics[scale=0.7]{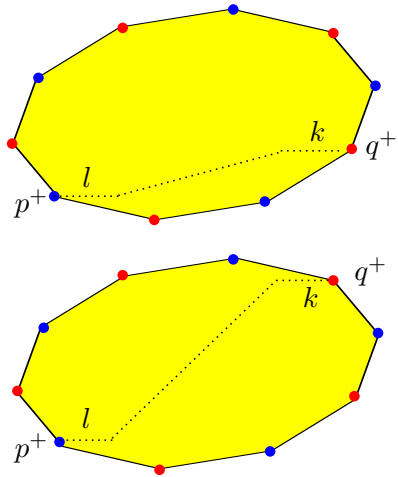}
    \caption{\label{fig:H(2)} Flat geometric representation of $\eta_X$ with two different residue slits.}
    \end{figure}

\begin{figure}[h]
    \centering
    \psfrag{a}{$l_1$}
    \psfrag{b}{$l_2$}
    \psfrag{c}{$l_{3}$}
    \psfrag{d}{$l_4$}
    \psfrag{r}{$r$}
    \psfrag{h}{$k_1$}
    \psfrag{g}{$k_{2}$}
       \psfrag{f}{$k_3$}
    \psfrag{e}{$k_{4}$}
    \includegraphics[scale=1.2]{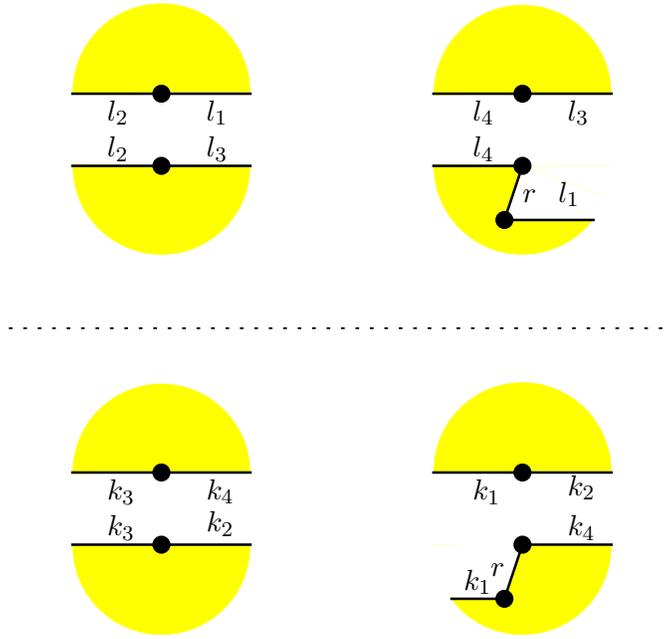}
    \caption{\label{fig:residue} Flat geometric representation of $\eta_R$.}
    \end{figure}

Now we can apply the residue slit construction in two ways to smooth $(\uC, \tilde{\eta})$ into the stratum $\cH(4)$, see the two dotted broken lines in Figure~\ref{fig:H(2)}. Along each broken line, we can remove a neighborhood of size $r$ and then perform the local modification at $p^+$ and $q^+$ as described above. Using the Arf invariant (see e.g.~\cite{KZ}), one checks that the first smoothing gives a flat surface with odd parity, and the second is with even parity. Moreover, in the latter the hyperelliptic involution is preserved by reflection from the center of the polygon. This is because in $\cH(4)$, the even spin component coincides with the hyperelliptic component. In general for higher genera, taking different residue slits not only separates the spin parity but also distinguishes the hyperelliptic structure, by breaking or preserving the hyperelliptic symmetry (see \cite{BCGGM2} for more details). 

Below we describe a heuristic correspondence between the log geometric construction and the residue slit construction. The pair of differentials $(\eta_X, \eta_R)$ can be obtained from a minimal log twisted differential $\eta$ over $C \to S$ as in Section \ref{ss:fiberwise-induced-diff} after fixing a chart $\beta: \ocM(G) \to \cM_S$ where $G$ is the weighted graph. We observe that the graph $G$ consists of two vertices $v_{R}$ and $v_{X}$ corresponding to the two components $\uR$ and $\uX$ respectively, along with two edges $l_p$ and $l_q$ both oriented from $v_{X}$ to $v_{R}$ with contact orders $c_{l_p} = c_{l_q} = 2$. We represent $G$ as follows:
\[
\xymatrix{
v_{X} \ar@/^1pc/[rr]^{l_{p}}  \ar@/_1pc/[rr]_{l_{q}} && v_{R}.
}
\]
Following Section \ref{ss:minimality}, we calculate that $\ocM(G) = \NN$. With the fixed chart $\beta: \NN \to \cM_{S}$, a similar construction as in \cite[Section 3.2]{FanoCI} combined with the discussion in \cite[6.3.1(2)]{Kim} imply that there are precisely two different choices of $\eta$. These two different choices should correspond to the two different slit constructions as above. 

Fix a choice of $\eta$, and consider two different choices of the charts $\beta_1 = \beta$ and $\beta_2$.  By Section \ref{ss:fiberwise-induced-diff}, we observe that the two induced differentials $\eta^{\beta_1}_{R} = \eta_{R}$ and $\eta^{\beta_2}$ are related by $ \eta^{\beta_2} = u \cdot \eta_{R}$, where $u \in \CC^*$ satisfies $\beta_2(1) = u \cdot \beta(1)$. This will result in a compatible rotation for the residue slit around both $p$ and $q$ in the picture shown in Figure~\ref{fig:H(2)}. Consequently, the spin parity of the nearby smooth differentials will not depend on the choice of charts. 

We emphasize again that however the definition of spin parity for log twisted differentials does not require the existence of nearby smoothing.
%%%%%%%%%%%%%%%%%%%%%%%Parities and spin structures
%%%%%%%%%%%%%%%%%%%%%%%%
\section{Hyperelliptic structure}\label{sec:hyp}
%%%%%%%%%%%%%%%%%%%%%%%%
%%%%%%%%%%%%%%%%%%%%%%%%

In this section, we study log twisted differentials with hyperelliptic involutions. 

%%%%%%%%%%%%%%%%%%%%%%%
\subsection{Statements and results}
%%%%%%%%%%%%%%%%%%%%%%%

Recall that a hyperelliptic differential has the underlying smooth curve being hyperelliptic and the differential being anti-invariant under the hyperelliptic involution. In general the hyperelliptic involution does not lift to an involution of log twisted differentials. It is important to notice that even when it lifts, the action on a log twisted differential is not preserved under arbitrary base change in the category of log schemes. This issue can be fixed by combining the techniques of log geometry and admissible covers, which will lead to a 
well-defined fiber category of log twisted hyperelliptic differentials over the category of log schemes, denoted by $\Hyp(\mu)$ (see Section \ref{ss:hyp-moduli}). Our goal is to prove the following result. 

\begin{thm}\label{thm:hyp}
The fibered category $\Hyp(\mu)$ is represented by a separated, log smooth Deligne-Mumford stack with its universal minimal log structure. Furthermore, the forgetful morphism to the Hodge bundle $\Hyp(\mu) \to \cH_{g,n}$ is representable and finite.
\end{thm}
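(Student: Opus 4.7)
The plan is to build $\Hyp(\mu)$ on top of the representability result of Theorem~\ref{thm:log-twisted-sec} by adding the extra structure of a log admissible double cover that replaces the bare hyperelliptic involution. As noted just before the statement, this replacement is forced: an involution on an abelian differential need not lift to a log twisted differential, and even when it does the lift is not preserved under arbitrary log base change.

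First I would package a log twisted hyperelliptic differential over a log scheme $S$ as a tuple consisting of a log twisted differential $\eta : C_S \to L_S$ with signature $\mu$, a log admissible double cover $\rho : C_S \to D_S$ onto a genus zero log curve whose branch profile is dictated by $\mu$ (including the Weierstrass markings with zero contact order, per the convention in the excerpt), the induced involution $\iota$, and an anti-invariance identity $\iota^{*}\eta = -\eta$ at the level of log maps. Let $\mathcal{AC}$ denote the moduli stack of such log admissible double covers, equipped with the orbifold structure along the branch nodes needed for the involution to extend across the boundary. From the theory of (log) twisted admissible covers, $\mathcal{AC}$ is a separated, log smooth Deligne--Mumford stack with a finite morphism to $\fM_{g,n}$, since it is built from the moduli of stable pointed genus zero curves via a finite \'etale cover and root-stack operations.

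Next I would form the fiber product $\lH(\mu)\times_{\fM_{g,n}}\mathcal{AC}$ in the category of log stacks. By base change from Corollary~\ref{cor:diff-moduli}, this is representable by a separated Deligne--Mumford log stack and carries a canonical minimal log structure. The anti-invariance condition $\iota^{*}\eta = -\eta$ is closed and functorial in log schemes, so $\Hyp(\mu)$ is realized as the closed substack where this identity holds, inheriting representability, separatedness, and the minimal log structure. The forgetful morphism to $\cH_{g,n}$ factors through the finite morphism $\lH(\mu)\to\cH_{g,n}$ of Corollary~\ref{cor:diff-moduli} composed with the finite projection remembering only the log twisted differential, so representability and finiteness follow.

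The main obstacle I expect is the log smoothness. Locally at a geometric point with weighted graph $G$, the involution $\iota$ acts on vertices, edges, degeneracies $e_v$, and smoothing elements $e_l$ of Section~\ref{ss:minimality}. I would argue that the anti-invariance condition identifies $\iota$-conjugate degeneracies and contact orders, so the characteristic monoid of the universal minimal log structure on $\Hyp(\mu)$ is the $\iota$-invariant submonoid $\ocM(G)^{\iota}$, which remains fine, saturated, and sharp. The deformation theory of $(\eta,\iota)$ is then governed by the toric chart associated to $\ocM(G)^{\iota}$, giving log smoothness; equivalently, $\ocM(G)^{\iota}$ should coincide with the characteristic monoid of the canonical log curve on the quotient $C_S/\iota$ produced by the admissible cover structure. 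Reconciling this identification with the minimality construction of Section~\ref{ss:minimality}---checking that no torsion or spurious relations appear after taking $\iota$-fixed points---is the technical heart of the argument and the place where the admissible cover replacement for the bare involution is essential.
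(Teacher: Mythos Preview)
Your overall architecture for representability, separatedness, and finiteness is close to the paper's: form a fiber product of the moduli of log twisted differentials with the stack $\cA$ of log admissible double covers and then cut out the closed condition $\iota^{*}\eta=-\eta$. Two corrections are needed. First, the fiber product in the paper is taken over $\cA^{\sharp}$ (the canonical log structure on the admissible-cover base), not over $\fM_{g,n}$; this matters because the minimal log structure on $\Hyp(\mu)$ is characterized precisely by strictness of $S\to S_m\times_{S^{\sharp}}S_a$ (Lemma~\ref{lem:characteristic-inv}), and your fiber product would not produce it. Second, the minimal characteristic monoid is the \emph{coequalizer} $\ocM(G,\iota)$ of the $\iota$-action (Lemma~\ref{lem:inv-monoid-1}), i.e.\ a quotient of $\ocM(G)$, not the $\iota$-invariant submonoid; dually, it is the equalizer on cones.

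The genuine gap is in your log smoothness argument. Knowing that the minimal monoid is fine, saturated, and sharp, or that it admits a toric chart, does \emph{not} imply log smoothness of the moduli stack: the general stack $\lH(\mu)$ of log twisted differentials already has fine saturated minimal monoids $\ocM(G)$ at every point, yet it is not log smooth (it even has extra boundary components). Log smoothness is an obstruction-theoretic statement, and for a positive-genus curve the relevant $H^{1}$ does not vanish. The paper's key idea, which your proposal is missing, is to \emph{descend to the genus zero quotient}: one defines a squaring map $\Hyp(\mu)\to\cQ$, $\eta\mapsto\eta^{\otimes 2}$, into a moduli of log twisted quadratic differentials on the rational curve $\PP=C/\iota$ (Lemma~\ref{lem:hyp-quadratic}). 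The target $\cQ$ is log smooth over $\oM_{0,n}$ because the log tangent bundle of $Q_{\cQ}\to\PP_{\cQ}$ is trivial and $H^{1}(\cO_{\PP^{1}})=0$ (Proposition~\ref{prop:qudratic-smooth}). One then checks directly via the local lifting criterion that $\Hyp(\mu)\to\cQ$ is log \'etale: given a square-zero extension $T\to T'$ and a lift of $\eta^{\otimes 2}$ to $T'$, there is a unique $\iota$-anti-invariant square root $\eta_{T'}$ extending $\eta_T$ (Proposition~\ref{prop:hyp-smooth}). This reduction to genus zero is what makes the obstruction vanish, and it has no analogue in your combinatorial description of the minimal monoid.
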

This Theorem follows from Propositions~\ref{prop:hyp-algebracity}, \ref{prop:hyp-prop}, and~\ref{prop:hyp-smooth}. 

%%%%%%%%%%%%%%%%%%%%%%%%%%%%%%%%%%%%%%%%
\subsection{Log admissible covers}\label{ss:log-adm}
%%%%%%%%%%%%%%%%%%%%%%%%%%%%%%%%%%%%%%%

Here we first recall the notion of log admissible cover as in \cite[Section 7.2]{Kim}.  

\begin{definition}\label{def:log-adm}
A {\em minimal log admissible cover} over $\uS$ is a commutative diagram of log schemes:
\begin{equation}\label{diag:log-adm}
\xymatrix{
C_a\ar[rr] \ar[rd] && \PP_a \ar[ld] \\
 & S_a & 
}
\end{equation}
such that
\begin{enumerate}
 \item $\uPP_a \to \uS_a$ is a family of stable rational curves with two disjoint sets of markings 
\[
\cR = \{r_{i}\}_{i=1}^{n_1}, \quad \cU = \{u_j\}_{j=1}^{n_2};  
\]

\item $\PP_a \to S_a$ is a family of genus zero log curves with the canonical log structure associated to the underlying family $\uPP_a \to \uS_a$ with the set of markings $\cR \cup \cU$; 

\item $C_a \to S_a$ is a family of log curves with two disjoint sets of markings 
\[
\cR' = \{r'_{i}\}_{i=1}^{n_1}, \quad \cU' = \{u'_j, u''_j\}_{j=1}^{n_2}; 
\]

\item The underlying diagram of \eqref{diag:log-adm} is a family of hyperelliptic admissible covers with simple ramification points labeled by the set $\cR$; 

\item The morphism $C_a \to \PP_a$ sends $r'_i$ to $r_i$ and sends $u'_j, u''_j$ to  $u_j$ for any $i,j$. 
\end{enumerate}
Pull-back of a minimal log admissible cover is defined as a cartesian diagram as usual. A diagram as \eqref{diag:log-adm} is called a {\em log admissible cover} over a log scheme $S$, if it is the pull-back of a minimal log admissible cover along a morphism of log schemes $S \to S_a$. 
\end{definition}

Denote by $\cA(n_1,n_2)$ the log stack of minimal log admissible covers with its minimal log structure. Then by \cite{Kim}, the stack $\cA(n_1,n_2)$ is a log smooth algebraic stack with the locally free log structure. In particular, the underlying stack of $\cA(n_1,n_2)$ is smooth. By definition, the log stack $\cA(n_1,n_2)$ represents the category of log admissible covers fibered over the category of log schemes. 

For a log admissible cover $C \to \PP$ over $S$, the hyperelliptic involution $\iota: C \to C$ is an isomorphism of log schemes over $S$ that fits in the following commutative diagram
\[
\xymatrix{
C \ar[rr]^{\iota} \ar[rd] && C \ar[ld] \\
 &\PP_{S}.&
}
\] 
It should be emphasized that $\iota$ acts trivially on $S$. Thus, the hyperelliptic involution is compatible with arbitrary base changes in the category of log schemes.

Note that for any underlying admissible cover over a geometric point, there is a unique lift to a minimal log admissible cover, see \cite[Remark 6.3.1(2)]{Kim}.

%%%%%%%%%%%%%%%%%%%%%%%%%%%%%%%%%%%%%%%%
\subsection{Log twisted hyperelliptic differentials}\label{ss:hyp-diff}
%%%%%%%%%%%%%%%%%%%%%%%%%%%%%%%%%%%%%%%%

\begin{definition}\label{def:hyp-diff}
A {\em log twisted hyperelliptic differential} over an arbitrary log scheme $S$ consists of the following data: 
\[
(\eta, C \to \PP_S)
\]
where
\begin{enumerate}
 \item $C \to \PP_S$ is a family of log admissible covers over $S$, see Definition \ref{def:log-adm};
 \item $\eta$ is a log twisted differential on $C \to S$. 
\end{enumerate}
They are compatible with the hyperelliptic involution $\iota$ such that $\iota^* \eta = -\eta$. 
\end{definition}

Pull-back of a log twisted hyperelliptic differential along a morphism of log schemes is defined in the usual sense, since the hyperelliptic involution is compatible with arbitrary base changes in the category of log schemes. Thus, log twisted hyperelliptic differentials form a category fibered over the category of log schemes. 

%The compatibility of the contact orders of $\eta$ will be discussed in Section \ref{ss:hyp-moduli}.

%%%%%%%%%%%%%%%%%%%%%%%%%%
\subsection{Minimality}\label{ss:hyp-minimality}
%%%%%%%%%%%%%%%%%%%%%%%%%%

We next turn to study minimal objects in this fibered category.  Consider a log twisted hyperelliptic differential as in Definition \ref{def:hyp-diff}. We further assume that $\uS$ is a geometric point. Let $G$ be the weighted graph associated to $\eta$. Then the hyperelliptic involution $\iota$ has a natural action on the underlying graph $\uG$. We have the following compatibility result. 

\begin{lemma}\label{lem:graph-inv}
The hyperelliptic involution $\iota$ acts on $G$ such that
\begin{enumerate}
 \item The partition of vertices $V(G) = V^d(G) \sqcup V^{nd}(G)$ is stable under $\iota$;
 \item The set of markings $\{\sigma_{i}\}_{i=1}^n$ is stable under $\iota$ with the contact orders satisfying $c_{\sigma_i} = c_{\iota(\sigma_i)}$; 
 \item The contact orders of the nodes satisfy $c_l = c_{\iota(l)}$ for any $l \in E(G)$.
\end{enumerate}
In particular, the action of $\iota$ on $G$ induces an isomorphism of monoids
\begin{equation}\label{equ:hyp-monoid-iso}
\phi_{\iota}: \ocM(G) \to \ocM(G)
\end{equation}
such that $\phi_{\iota}(e_l) = e_{\iota(l)}$ and $\phi_{\iota}(e_v) = e_{\iota(v)}$ for any $l \in E(G)$ and $v \in V(G)$. 
\end{lemma}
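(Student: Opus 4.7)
The plan is to exploit two key facts: $\iota : C \to C$ is an isomorphism of log schemes over $S$ (so acts trivially on $\cM_S$ and on the smoothing elements $e_l$), and the condition $\iota^{*}\eta = -\eta$ descends, on the level of characteristic sheaves, to the identity $\iota^{*}(\bar{\eta}')^{\flat} = (\bar{\eta}')^{\flat}$, because $-1 \in \cO_C^{*}$ is a unit and disappears after modding out units. Here $\eta'$ denotes the associated stable log map from Lemma~\ref{lem:sec-map}. The underlying involution of $\uC$ inherited from the admissible cover permutes vertices, edges, and rays of $\uG$: the ramification markings in $\cR'$ are pointwise fixed, the unramified markings in $\cU'$ are swapped in pairs $(u'_j, u''_j)$, and the action on vertices and edges is induced from the action on irreducible components and nodes.

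I would then establish (1), (2), and (3) by comparing the local descriptions of Section~\ref{ss:local-structure} at a point and its $\iota$-image, and invoking the characteristic-level identity above to transport the numerical data. Concretely, comparing \eqref{equ:general-pt} at a smooth unmarked $z \in Z_v$ with the same expression at $\iota(z) \in Z_{\iota(v)}$ forces $e_v = e_{\iota(v)}$ in $\overline{\cM}_S$, proving (1). Comparing \eqref{equ:marked-pt} at $\sigma_i$ and $\iota(\sigma_i)$ yields $c_{\sigma_i} = c_{\iota(\sigma_i)}$, proving (2); this is consistent with the standing assumption that $\mu$ is compatible with $\iota$. Comparing \eqref{equ:node} at a node $z$ with $\iota(z)$ gives $c_l = c_{\iota(l)}$, proving (3), and simultaneously shows that the local choice of the ``higher'' component in \eqref{equ:node-deg} is $\iota$-equivariant, so the edge orientation is preserved.

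To construct $\phi_{\iota}$, I would define the assignment $e_v \mapsto e_{\iota(v)}$ and $e_l \mapsto e_{\iota(l)}$ on the generators of the free abelian group $\langle e_v, e_l\rangle$ and verify that it descends through the quotient defining $\overline{\cM}(G)$. The non-degeneracy relation $e_v \sim 0$ for $v \in V^{nd}(G)$ is preserved by (1); the edge relation \eqref{equ:edge-relation} is preserved by (1) and (3) together with the equivariance of edge orientation. The resulting automorphism of the quotient group restricts to an automorphism of the saturated submonoid $\overline{\cM}(G) \subset \cG$ because $\iota$ is an involution and saturation is intrinsic to the ambient group.

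The main obstacle I anticipate is verifying that the edge orientation implicit in \eqref{equ:node} is genuinely $\iota$-equivariant, i.e., that at the node $\iota(z)$ the ``higher'' component is the $\iota$-image of the ``higher'' component at $z$. This is not purely formal from the local expression, but follows because the orientation is encoded in the global degeneracy relation \eqref{equ:node-deg}, whose ingredients are all $\iota$-invariant by (1).
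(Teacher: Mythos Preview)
Your proposal is correct and follows essentially the same route as the paper: the paper's proof is a terse three sentences that deduce $\iota^{*}\bar{\eta}^{\flat} = \bar{\eta}^{\flat}$ on characteristic sheaves from $\iota^{*}\eta=-\eta$, then invoke the construction in Section~\ref{ss:minimality} for (1)--(3) and the universal property of saturation for $\phi_{\iota}$. Your write-up simply unpacks these references, making explicit the comparison of the local formulas \eqref{equ:general-pt}--\eqref{equ:node} at $z$ and $\iota(z)$ and the verification that the defining relations of $\ocM(G)$ are preserved; your remark on the $\iota$-equivariance of edge orientation is a detail the paper leaves implicit but which indeed follows from $e_v = e_{\iota(v)}$ in $\ocM_S$.
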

\begin{proof}
Since $\iota^* \eta = -\eta$ by Definition \ref{def:hyp-diff}, we have $\iota^{*}\bar{\eta}^{\flat} = \bar{\eta}^{\flat}$ on the level of characteristic monoids. Properties (1)--(3) then follow from the construction in Section \ref{ss:minimality}. The isomorphism $\phi_{\iota}$ follows from the construction of $\ocM(G)$ in Section \ref{ss:minimality} and the universal property of saturation.
\end{proof}

We will describe the minimal monoid in two different ways. The first one gives a description as the quotient of $\iota$-action.

\begin{lemma}\label{lem:inv-monoid-1}
Notations as above, denote by $\ocM(G,\iota)$ the coequalizer of the following diagram in the category of fine, saturated, and sharp monoids:
\[
\ocM(G) \rightrightarrows \ocM(G)
\]
where the top arrow is given by $\phi_{\iota}$ and the bottom arrow is given by the identity. Denote by $\psi: \ocM(G) \to \ocM(G,\iota)$. Then the fine, saturated, and sharp monoid $\ocM(G,\iota)$ satisfies the following properties: 
\begin{enumerate}
 \item Both $\psi(e_v)$ and $\psi(e_l)$  in $\ocM(G,\iota)$ are non-trivial for any $v \in V^{d}(G)$ and any $ l \in E(G)$;
 \item $\psi(e_v) = \psi(e_{\iota(v)})$ and $\psi(e_{l}) = \psi(e_{\iota(l)})$. 
\end{enumerate}
\end{lemma}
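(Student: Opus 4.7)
The plan is to construct $\ocM(G,\iota)$ explicitly, verify the universal property of the coequalizer, and check properties~(1) and~(2) combinatorially using Lemma~\ref{lem:graph-inv}.

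For the construction, I would start from $\ocM(G)^{gp} = \cG$ as in Section~\ref{ss:minimality} and form the group quotient
\[
\cG'(G,\iota) := \cG / \langle \phi_{\iota}(x) - x : x \in \cG \rangle,
\]
then pass to its maximal torsion-free quotient $\cG(G,\iota)$. Define $\ocM(G,\iota) \subset \cG(G,\iota)$ as the saturated submonoid generated by the images $[e_v]$ and $[e_l]$. By Lemma~\ref{lem:graph-inv} the non-degeneracy conditions and edge relations~\eqref{equ:edge-relation} defining $\ocM(G)$ are $\phi_{\iota}$-stable, so the construction is well-posed. The resulting monoid is fine and saturated by construction, and sharp because the new relations $\phi_{\iota}(x) = x$ pair non-invertible generators of the same type and introduce no new units.

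For the universal property, the projection $\psi: \ocM(G) \to \ocM(G,\iota)$ satisfies $\psi \circ \phi_{\iota} = \psi$ by construction. Any morphism $\alpha: \ocM(G) \to \cN$ to a fine, saturated, sharp monoid with $\alpha \circ \phi_{\iota} = \alpha$ induces $\alpha^{gp}$ that kills $\phi_{\iota}(x) - x$ and hence factors through $\cG'(G,\iota)$; since the composition with $\cN^{gp}$ has torsion-free image, the factorization descends through $\cG(G,\iota)$; finally the universal property of saturation produces the desired monoid map out of $\ocM(G,\iota)$. Property~(2) is then immediate from the coequalizer identifications.

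For property~(1), orbits on which $\iota$ acts freely merely identify generators without any new vanishing, and vertices fixed by $\iota$ contribute generators whose non-triviality is already present in $\ocM(G)$. The single delicate case is an edge $l \in E(G)$ fixed by $\iota$ whose endpoints $v_1, v_2 = \iota(v_1)$ are swapped. Here, naively, the edge relation~\eqref{equ:edge-relation} together with the coequalizer identification $[e_{v_1}] = [e_{v_2}]$ would yield $c_l \cdot \psi(e_l) = 0$, threatening the non-triviality of $\psi(e_l)$ when $c_l > 0$.

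The main obstacle is precisely to rule out this collapse, and I expect to handle it via orientation preservation. The minimal partial order $\geq_{\min}$ on $V(G)$ is intrinsic to the edge relations in $\ocM(G)$ (see Section~\ref{ss:minimality}), so the isomorphism $\phi_{\iota}$ preserves it: $v_1 \geq_{\min} v_2$ implies $v_2 = \iota(v_1) \geq_{\min} \iota(v_2) = v_1$, whence $v_1 =_{\min} v_2$ and therefore $c_l = 0$ automatically. The edge relation then degenerates to $e_{v_1} = e_{v_2}$ in the coequalizer, identifying the two degeneracies without touching $\psi(e_l)$. With this combinatorial compatibility in place, $\psi(e_v) \neq 0$ for every $v \in V^{d}(G)$ and $\psi(e_l) \neq 0$ for every $l \in E(G)$, completing property~(1).
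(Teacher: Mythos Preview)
Your construction of $\ocM(G,\iota)$ and verification of property~(2) are fine, but your argument for property~(1) has a gap. You claim the ``single delicate case'' is a $\iota$-fixed edge with swapped endpoints, and you correctly show $c_l = 0$ there. However, identifying $\iota$-conjugate generators interacts with the edge relations~\eqref{equ:edge-relation} along \emph{paths}, not just single edges: a chain $v_0 >_{\min} v_1 >_{\min} \cdots >_{\min} v_k$ with $\iota(v_0) = v_k$ would, after setting $[e_{v_0}] = [e_{v_k}]$, force a positive combination $\sum c_{l_i}[e_{l_i}]$ to vanish, hence each $[e_{l_i}]$ to be torsion. Your order-preservation idea does extend to exclude such chains (applying $\iota$ would produce a strict $\geq_{\min}$-cycle, impossible since $\ocM(G)$ is sharp), but you have not carried this out; the assertion that free-orbit identifications ``introduce no new vanishing'' is precisely the point requiring proof. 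In general one must show $n\cdot e_l \notin \im(\phi_\iota - \id)$ in $\ocM(G)^{gp}$ for every $n>0$, and your local case split does not establish this. The same omission undermines your one-line sharpness claim.

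The paper bypasses the combinatorics by duality: the coequalizer of $(\id,\phi_\iota)$ on $\ocM(G)$ corresponds to the equalizer of $(\id,\phi_\iota^\vee)$ on the dual cone $\ocM(G)^\vee$. Since $\phi_\iota^\vee$ is an automorphism of that cone, the fixed subcone contains an interior vector (average any interior vector with its $\phi_\iota^\vee$-image), giving a functional on $\ocM(G,\iota)$ whose pullback to $\ocM(G)$ is strictly positive on every nonzero element. Hence $\psi$ kills nothing. This argument works for any automorphism of any fine saturated sharp monoid and avoids the graph-by-graph analysis entirely.
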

\begin{proof}
The second property follows from the description of the isomorphism (\ref{equ:hyp-monoid-iso}). For the first one, consider dually the equalizer of cones
\[
\ocM(G,\iota)^{\vee} \rightarrow \ocM(G)^{\vee} \rightrightarrows \ocM(G)^{\vee}.
\]
Since $\phi_{\iota}^{\vee}$ is an automorphism of the cone $\ocM(G)^{\vee}$, the image of $\ocM(G,\iota)^{\vee}$ in $\ocM(G)^{\vee}$ contains a vector in the interior of $\ocM(G)^{\vee}$. This finishes the proof. 
\end{proof}

\begin{lemma}\label{lem:min}
Notations as above, there is a canonical morphism of monoids
\[
\ocM(G,\iota) \to \ocM_{S}.
\]
\end{lemma}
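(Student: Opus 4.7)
The plan is to construct the canonical morphism $\ocM(G,\iota) \to \ocM_S$ by factoring the existing canonical morphism $\chi: \ocM(G) \to \ocM_S$ from \eqref{equ:minimal} through the coequalizer $\psi: \ocM(G) \to \ocM(G,\iota)$ of Lemma~\ref{lem:inv-monoid-1}. By the universal property of coequalizers in the category of fine, saturated, sharp monoids, it is enough to check $\chi\circ\phi_\iota = \chi$. Since $\ocM(G)$ is generated, as a saturated submonoid of its groupification, by the degeneracies $\{e_v\}_{v\in V(G)}$ and the smoothing elements $\{e_l\}_{l\in E(G)}$, and since $\phi_\iota$ permutes these generators via $e_v \mapsto e_{\iota(v)}$ and $e_l \mapsto e_{\iota(l)}$ by Lemma~\ref{lem:graph-inv}, the problem reduces to verifying $\chi(e_v) = \chi(e_{\iota(v)})$ and $\chi(e_l) = \chi(e_{\iota(l)})$ in $\ocM_S$ for every $v$ and every $l$.

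For the degeneracies, I would argue exactly as in the proof of Lemma~\ref{lem:graph-inv}: the condition $\iota^*\eta = -\eta$ of Definition~\ref{def:hyp-diff} descends to an equality $\iota^*\bar\eta^\flat = \bar\eta^\flat$ on characteristic monoids, the sign being invisible there. Applying this equality to the local expansion \eqref{equ:general-pt} at a smooth unmarked point of the component $Z_v$, compared with the expansion at the corresponding point of $Z_{\iota(v)}$, forces the images of $e_{Z_v}$ and $e_{Z_{\iota(v)}}$ in $\ocM_S$ to coincide.

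For the smoothings, the key input is the log admissible cover structure $C \to \PP_S \to S$: the log structure on $S$ is pulled back from the log admissible cover moduli $\cA(n_1,n_2)$, where the characteristic monoid is generated by smoothing parameters of nodes of $\PP$ rather than of $C$. Because $\iota$ commutes with the projection $C\to \PP_S$ by definition of the hyperelliptic involution, the $\iota$-paired nodes $z$ and $\iota(z)$ of $C$ map to the same node of $\PP_S$; hence the corresponding smoothing elements $e_l$ and $e_{\iota(l)}$ in $\ocM_S$ arise from the same generator of $\ocM_{S_a}$, scaled by a common ramification index along $z$ and $\iota(z)$ (cf.~\cite[Section~7.2]{Kim}). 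This is the substantive content of the lemma: the smoothing equality is where the log admissible cover structure plays a role that mere $\iota$-invariance of $\eta$ would not produce, and it is the step I expect to require the most care. Given both equalities, the universal property of the coequalizer immediately yields the desired canonical morphism.
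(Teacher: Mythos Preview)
Your argument is correct and invokes the same coequalizer universal property as the paper, but you work harder than necessary. The paper's proof is essentially one line: by the definition of log admissible cover (see the emphasized sentence in Section~\ref{ss:log-adm}), the involution $\iota$ is an isomorphism of log schemes \emph{over} $S$, so it acts trivially on $\cM_S$ and hence on $\ocM_S$; the equality $\chi\circ\phi_\iota=\chi$ is then immediate from the $\iota$-equivariance of the canonical map $\chi$, and the coequalizer property finishes. You instead split into generators and give separate arguments for degeneracies and smoothing elements. Your degeneracy argument already implicitly uses that $\iota$ is trivial on $\ocM_S$ (without this, the comparison of the two local expansions of $\bar\eta^\flat$ would not take place in a common monoid), and your more elaborate smoothing-element argument through $\ocM_{S_a}$ simply reproves a special case of the same triviality by hand. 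So what you flag as ``the substantive content of the lemma'' is in fact already packaged in the definition. One minor inaccuracy: the log structure on $S$ is not in general pulled back from $\cA(n_1,n_2)$; rather $S$ carries an arbitrary log structure equipped with a morphism to $S_a$. This does not break your argument, since the relevant smoothing elements still factor through $\ocM_{S_a}\to\ocM_S$, but it is worth stating correctly.
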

\begin{proof}
Note that by definition $\iota$ acts trivially on $\cM_{S}$, hence trivially on $\ocM_{S}$. The statement thus follows from Lemma \ref{lem:inv-monoid-1} and the universal property of coequalizer.
\end{proof}

\begin{definition}\label{def:hyp-min}
A log twisted hyperelliptic differential $\eta$ over a geometric point $\uS$ is called {\em minimal} if $\ocM(G,\iota) \to \cM_S$ is an isomorphism. A family of log twisted hyperelliptic differentials is called {\em minimal} if every geometric fiber is minimal.
\end{definition}

The definition of minimality in the family case will be justified later in Proposition \ref{prop:min-open}. 

\bigskip 

We give the second description of $\ocM(G,\iota)$ which will be useful in the construction of the stack $\Hyp(\mu)$. Consider the partition
\begin{equation*}%\label{equ:edge-partition}
E(G) = E(G)^{\iota}\sqcup \hat{E}(G)
\end{equation*}
where $E(G)^{\iota}$ is the set of edges fixed by $\iota$.  Denote by $\ocN^{gp}$ the torsion-free part of the quotient group 
\[
\ocM(G)^{gp} / \langle e_l = e_{\iota(l)} \ | \ l \in \hat{E}(G) \rangle. 
\]
Denote by $\ocN \subset \ocN^{gp}$ the saturated submonoid generated by the image of $\ocM(G) \to \ocN^{gp}$. 

\begin{lemma}\label{lem:inv-monoid-2}
$\ocM(G,\iota) = \ocN$. 
\end{lemma}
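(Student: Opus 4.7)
The plan is to prove $\ocM(G,\iota) = \ocN$ by identifying them as the saturated submonoid of a common ambient group generated by the image of $\ocM(G)$.

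Unwinding Lemma~\ref{lem:inv-monoid-1}, the coequalizer $\ocM(G,\iota)$ is obtained by first taking $(\ocM(G)^{gp}/K_1)_{\mathrm{tf}}$ for
\[
K_1 \ = \ \langle\, e_v - e_{\iota(v)},\ e_l - e_{\iota(l)} \ :\ v \in V(G),\ l \in E(G)\,\rangle,
\]
and then saturating the image of $\ocM(G)$ inside this torsion-free group. By construction, $\ocN$ has the analogous description with $K_1$ replaced by $K_2 = \langle\, e_l - e_{\iota(l)} : l \in \hat{E}(G)\,\rangle$. Since relations $e_l - e_{\iota(l)}$ with $l \in E(G)^{\iota}$ are trivial, we may equivalently write $K_2 = \langle e_l - e_{\iota(l)} : l \in E(G)\rangle$, so $K_2 \subseteq K_1$. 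Both ambient groups are generated (as groups) by the image of $\ocM(G)$, so it suffices to show that the natural surjection $\ocN^{gp} \twoheadrightarrow \ocM(G,\iota)^{gp}$ is an isomorphism, i.e., that $e_v \equiv e_{\iota(v)}$ in $\ocN^{gp}$ for every $v \in V(G)$.

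For this I would fix $v \in V(G)$, set $\delta_v := e_v - e_{\iota(v)} \in \ocM(G)^{gp}/K_2$, and show $\delta_v = 0$ in the torsion-free quotient. For another $w \in V(G)$, pick a path in the connected dual graph $\uG$ from $v$ to $w$. Repeated application of the edge relation~\eqref{equ:edge-relation} expresses $e_w - e_v$ as a sum $\sum \epsilon_i c_{l_i} e_{l_i}$, with $\epsilon_i \in \{\pm 1\}$ recording whether the path's traversal of $l_i$ agrees with its edge orientation. Since $\iota$ preserves contact orders by Lemma~\ref{lem:graph-inv}(3) and preserves the $\geq_{\min}$-ordering via the isomorphism $\phi_{\iota}$, applying $\iota$ yields $e_{\iota(w)} - e_{\iota(v)} = \sum \epsilon_i c_{l_i} e_{\iota(l_i)}$ with identical signs. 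Subtracting, $\delta_w - \delta_v = \sum \epsilon_i c_{l_i}(e_{l_i} - e_{\iota(l_i)})$ lies in $K_2$, so $\delta_v \equiv \delta_w$ and the common value, call it $\delta$, is independent of the chosen vertex. Taking $w = \iota(v)$ gives $\delta = \delta_{\iota(v)} = e_{\iota(v)} - e_v = -\delta$, hence $2\delta = 0$, so $\delta$ vanishes in the torsion-free quotient $\ocN^{gp}$.

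The one subtle point is verifying that the signs $\epsilon_i$ really transport through $\iota$—i.e., that $\iota$ sends each oriented edge to an oriented edge with matching orientation—which is where Lemma~\ref{lem:graph-inv} and the $\iota$-equivariance of $\phi_{\iota}$ are genuinely needed. Connectedness of $\uG$, which is automatic for connected stable curves, is what permits the path argument between arbitrary pairs of vertices. Once $e_v = e_{\iota(v)}$ is established in $\ocN^{gp}$, the surjection $\ocN^{gp} \twoheadrightarrow \ocM(G,\iota)^{gp}$ is an isomorphism, and the universal property of saturation inside a common ambient group finishes the identification $\ocN = \ocM(G,\iota)$.
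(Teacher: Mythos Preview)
Your argument is correct and arrives at the same conclusion as the paper, but the endgame differs. The paper anchors the path argument at an $\iota$-\emph{fixed} vertex $v'$ (which always exists here, since any component of $\uPP$ carrying a branch point from $\cR$ has connected preimage in $\uC$): running the edge relation~\eqref{equ:edge-relation} along a path $v \to v'$ and along its $\iota$-image $\iota(v) \to v'$ gives $e_v = e_{\iota(v)}$ \emph{on the nose} in $\ocM(G)^{gp}/K_2$, before passing to the torsion-free quotient. Your approach instead shows that $\delta_v := e_v - e_{\iota(v)}$ is independent of $v$, then uses $\delta = \delta_{\iota(v)} = -\delta$ to get $2\delta = 0$ and concludes via torsion-freeness of $\ocN^{gp}$. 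This is a clean trick and has the virtue of not invoking the existence of a fixed vertex, so it would work verbatim for an arbitrary involution on a connected weighted graph; the paper's route is slightly more direct in this specific hyperelliptic setting and yields the equality already at the level of $\ocM(G)^{gp}/K_2$.
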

\begin{proof}
Consider the natural morphism
$\ocM(G) \to \ocN. $
We check that the images of $e_v$ and $e_{\iota(v)}$ under the above morphism are identical for any $v \in V(G)$. Assume that $v \neq \iota(v)$. Take a vertex $v'$ invariant under $\iota$. Let 
$l_1,\ldots, l_k$ be a sequence of edges joining $v$ with $v'$. Then $\iota(l_1), \ldots, \iota(l_k)$
is a sequence of edges joining $\iota(v)$ with $v'$. By \eqref{equ:edge-relation}, the images of $e_{v}$ and $e_{\iota(v)}$ are identical in $\ocN$. This induces a morphism 
$\ocM(G,\iota) \to \ocN.$

On the other hand, the construction of $\ocN$ induces a morphism $\ocN \to \ocM(G,\iota)$. One checks that the two morphisms 
\[
\ocM(G,\iota) \to \ocN \mbox{ and } \ocN \to \ocM(G,\iota)
\]
are inverses to each other. This finishes the proof. 
\end{proof}

%%%%%%%%%%%%%%%%%%%%%%
\subsection{Two properties of minimality}
%%%%%%%%%%%%%%%%%%%%%%

Consider a log twisted hyperelliptic differential $(\eta, C \to \PP_S)$ over $S$. Denote by $C_a \to \PP_a$ the associated minimal log admissible cover over a log scheme $S_a$. Denote by $C^{\sharp} \to S^{\sharp}$ the canonical log curve associated to the underlying stable curve $\uC \to \uS$ with the markings. Let $\eta_m$ over $C_m \to S_m$ be the minimal log twisted differential associated to $\eta$ over $C \to S$. We thus have a commutative diagram
\[
\xymatrix{
S \ar[r] \ar[d] & S_a \ar[d] \\
S_{m} \ar[r] & S^{\sharp}
}
\]
which induces a canonical morphism of log schemes
\begin{equation}\label{equ:to-product}
S \to S_m\times_{S^{\sharp}}S_a.
\end{equation}

\begin{lemma}\label{lem:characteristic-inv}
The family $(\eta, C \to \PP_S)$ over $S$ is minimal in the sense of Definition \ref{def:hyp-min} if and only if (\ref{equ:to-product}) is strict. 
\end{lemma}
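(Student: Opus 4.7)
The plan is to identify $\ocM_{S_m\times_{S^{\sharp}}S_a}$ at every geometric fiber with $\ocM(G,\iota)$, so that strictness of~\eqref{equ:to-product} reduces directly to the condition in Definition~\ref{def:hyp-min}. Since $S\to S_m$, $S\to S_a$, $S_m\to S^{\sharp}$, and $S_a\to S^{\sharp}$ all have identity underlying morphism once $\uS$ is fixed, the morphism~\eqref{equ:to-product} is strict precisely when its induced map on characteristic sheaves is an isomorphism at every geometric point of $\uS$; hence I may reduce to the case that $\uS$ is a geometric point and argue on the level of characteristic monoids.

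I would then read off the three relevant characteristic monoids. By Section~\ref{ss:log-curve}, $\ocM_{S^{\sharp}}=\NN^{E(G)}$ is freely generated by the smoothing parameters $\{e_l\}_{l\in E(G)}$ of the nodes of $\uC$; by Proposition~\ref{prop:minimal}, $\ocM_{S_m}=\ocM(G)$, and the natural map $\ocM_{S^{\sharp}}\to\ocM_{S_m}$ sends $e_l\mapsto e_l$. Using the local description of minimal log admissible covers from~\cite[Section~7.2]{Kim}, the characteristic $\ocM_{S_a}$ is freely generated by the smoothing parameters of the $\iota$-orbits in $E(G)$: for a free orbit $\{l,\iota(l)\}$ the generator identifies $e_l$ with $e_{\iota(l)}$, and for a fixed (Weierstrass) edge $l=\iota(l)$ the generator is $e_l$ itself. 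Thus $\ocM_{S_a}\cong\NN^{E(G)/\iota}$, and $\ocM_{S^{\sharp}}\to\ocM_{S_a}$ is the canonical quotient.

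With these descriptions in hand, the pushout $\ocM_{S_m}\oplus_{\ocM_{S^{\sharp}}}\ocM_{S_a}$ in the category of fine, saturated, sharp monoids is obtained from $\ocM(G)$ by adjoining the relations $e_l\sim e_{\iota(l)}$ for all $l\in E(G)$ and then taking the torsion-free saturation; this is exactly the construction of $\ocN$ in Lemma~\ref{lem:inv-monoid-2}, so the pushout equals $\ocM(G,\iota)$. Note that the edge relations~\eqref{equ:edge-relation} together with the $\iota$-invariance of contact orders (Lemma~\ref{lem:graph-inv}) automatically propagate the identifications $e_v\sim e_{\iota(v)}$ along paths through $\iota$-fixed vertices, so no additional relations on the vertex generators need to be imposed. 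Combining these steps, \eqref{equ:to-product} is strict if and only if $\ocM(G,\iota)\to\ocM_S$ is an isomorphism, which is precisely minimality in the sense of Definition~\ref{def:hyp-min}. The main technical obstacle is the precise description of $\ocM_{S_a}$: although ramified nodes exhibit a doubling phenomenon between the smoothing parameters of $\uC$ and $\uPP$, one must verify that $\ocM_{S_a}$ is correctly generated by the smoothing parameters on the $\uC$-side so that no spurious torsion or doubling enters the pushout.
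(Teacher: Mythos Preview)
Your proposal is correct and follows the same route as the paper's one-line proof, which simply invokes Lemma~\ref{lem:inv-monoid-2}. You have unpacked exactly what that reference means: the characteristic monoid of the fiber product $S_m\times_{S^{\sharp}}S_a$ is the fs-sharp pushout $\ocM(G)\oplus_{\NN^{E(G)}}\ocM_{S_a}$, and with $\ocM_{S_a}\cong\NN^{E(G)/\iota}$ mapping in via the natural quotient this pushout is precisely the monoid $\ocN$ of Lemma~\ref{lem:inv-monoid-2}, hence $\ocM(G,\iota)$. Your caution about the ramified (Weierstrass) nodes is well placed and correctly resolved: the generator of $\ocM_{S_a}$ at such a node is the $\uC$-side smoothing parameter $e_l$ (with the $\uPP$-side parameter equal to $2e_l$), so the map $\NN^{E(G)}\to\ocM_{S_a}$ really is the plain quotient and no torsion or doubling contaminates the pushout.
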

\begin{proof}
The statement follows from Lemma~\ref{lem:inv-monoid-2} and the second description of $\ocM(G,\iota)$.
\end{proof}

\begin{proposition}\label{prop:min-open}
Given a family of log twisted hyperelliptic differentials over a log scheme $S$, the locus of minimal objects is open in $\uS$.
\end{proposition}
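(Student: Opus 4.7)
The plan is to deduce the openness from Lemma~\ref{lem:characteristic-inv} together with the standard fact that strictness of a morphism of fs log schemes (or algebraic stacks) is an open condition on the source. First I would revisit the proof of Lemma~\ref{lem:characteristic-inv}: the argument proceeds by comparing characteristic monoids at each geometric point, so in fact one obtains the sharper statement that a given geometric point $s \in \underline{S}$ is a minimal point (in the sense of Definition~\ref{def:hyp-min}) if and only if the canonical morphism
\[
f: S \longrightarrow S_{m}\times_{S^{\sharp}}S_{a}
\]
is strict at $s$. Here the fiber product is formed in the category of fs log stacks, and by Lemma~\ref{lem:inv-monoid-2} its characteristic monoid at $f(s)$ is precisely $\overline{\mathcal{M}}(G,\iota)$ for the weighted graph $G$ of the fiber over $s$.

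Next I would invoke the openness of strictness. Since $S_{m}$, $S_{a}$, and $S^{\sharp}$ are all fs log algebraic stacks (by Corollary~\ref{cor:diff-moduli} together with the description of $\mathcal{A}(n_{1},n_{2})$ recalled in Section~\ref{ss:log-adm}), the fs fiber product $P := S_{m}\times_{S^{\sharp}}S_{a}$ is again fs. Consequently both $f^{-1}\overline{\mathcal{M}}_{P}$ and $\overline{\mathcal{M}}_{S}$ are constructible sheaves of fs sharp monoids on $\underline{S}$, and the locus on which the induced morphism $\overline{f}^{\flat}$ is an isomorphism of monoid sheaves is open in $\underline{S}$. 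This general openness statement can either be cited from standard references on fs log geometry or verified directly by working with fs charts: strictness at $s$ is equivalent to the chart-level map between finitely generated monoids being an isomorphism, and its kernel and cokernel, being coherent, vanish on an open neighborhood of $s$ as soon as they vanish at $s$.

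Combining these two steps proves the proposition. The main step requiring care is the pointwise version of Lemma~\ref{lem:characteristic-inv}, i.e., ensuring that the characterization of minimality via strictness of~\eqref{equ:to-product} holds fiber-by-fiber; a secondary technical point is verifying that the fs fiber product $S_{m}\times_{S^{\sharp}}S_{a}$ genuinely has characteristic sheaf equal to $\overline{\mathcal{M}}(G,\iota)$ at every geometric point, which is exactly the content of Lemma~\ref{lem:inv-monoid-2}. Once these two items are confirmed, the openness is an immediate consequence of the standard openness of the strict locus for morphisms of fs log stacks.
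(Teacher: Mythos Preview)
Your proposal is correct and follows essentially the same approach as the paper: reduce minimality to strictness of the morphism~\eqref{equ:to-product} via Lemma~\ref{lem:characteristic-inv}, and then invoke the fact that strictness is an open condition. Your additional care in extracting the pointwise version of Lemma~\ref{lem:characteristic-inv} from its proof is a useful elaboration, but the underlying argument is the same.
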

\begin{proof}
By Lemma \ref{lem:characteristic-inv}, the  locus of minimal objects in $S$ is the locus  where the morphism (\ref{equ:to-product}) is strict. The statement follows from the fact that being strict is an open condition. 
\end{proof}

\begin{proposition}\label{prop:min-univ}
Given a family of log twisted hyperelliptic differentials $\eta$ over  a log scheme $S$, there is (up to a unique isomorphism) a unique minimal object $\eta_m$ over a log scheme $S_m$ with a morphism $S \to S_m$, such that $\eta$ is the pull-back of $\eta_m$, and the underlying morphism $\uS \to \uS_m$ is the identity.
\end{proposition}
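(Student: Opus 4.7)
My plan for Proposition \ref{prop:min-univ} is to combine the universal minimality for log twisted differentials (Proposition \ref{prop:minimal}) with the universal minimality for log admissible covers (from \cite{Kim}) via a fiber product construction, then verify that the hyperelliptic compatibility descends through the pull-back by exploiting Lemma \ref{lem:characteristic-inv}.

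Given a log twisted hyperelliptic differential $(\eta, C \to \PP_S)$ over $S$, I first produce two auxiliary minimal objects. Applying Proposition \ref{prop:minimal} to the log twisted differential data on $C \to S$ yields a minimal log twisted differential $\eta_m^{\mathrm{diff}}$ over $C_m^{\mathrm{diff}} \to S_m^{\mathrm{diff}}$ together with a morphism $S \to S_m^{\mathrm{diff}}$ whose underlying map is the identity. The underlying log admissible cover analogously comes from a unique minimal log admissible cover over some $S_a$ with a morphism $S \to S_a$ having identity underlying map. Both $S_m^{\mathrm{diff}}$ and $S_a$ sit naturally over the canonical log curve base $S^{\sharp}$. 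I then define
\[
S_m := S_m^{\mathrm{diff}} \times_{S^{\sharp}} S_a
\]
in the category of fine, saturated log schemes. Since all three underlying schemes coincide with $\uS$, the underlying scheme of $S_m$ is $\uS$, and the fiber product produces the canonical morphism $S \to S_m$ with identity underlying map.

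Pulling back $\eta_m^{\mathrm{diff}}$ and the minimal log admissible cover to $S_m$ gives candidate data $(\eta_m, C_m \to \PP_{S_m})$; the two induced log curve structures on $\uC_m$ agree because both descend from $C^{\sharp} \to S^{\sharp}$. The critical check is the hyperelliptic compatibility $\iota^* \eta_m = -\eta_m$. Working fiber-wise over geometric points, this reduces to showing that the involution $\phi_\iota$ on $\ocM(G)$ of Lemma \ref{lem:graph-inv} becomes trivial on the characteristic monoid of $S_m$. By Lemma \ref{lem:inv-monoid-2}, this triviality is implemented precisely by imposing the relations $e_l = e_{\iota(l)}$ for $l \in \hat E(G)$, and those relations are exactly what the fiber product over $S^{\sharp}$ enforces, since the smoothing parameters along $\iota$-conjugate nodes are already identified inside $\ocM_{S_a}$ by construction of the minimal log admissible cover. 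Minimality of $(\eta_m, C_m \to \PP_{S_m})$ over $S_m$ then follows from Lemma \ref{lem:characteristic-inv}, since the identity $S_m \to S_m^{\mathrm{diff}} \times_{S^{\sharp}} S_a$ is tautologically strict.

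For the universal property, suppose $(\eta', C' \to \PP'_{S'})$ is another minimal log twisted hyperelliptic differential over $S'$ with $\uS' = \uS$, together with a morphism $S \to S'$ through which $(\eta, C \to \PP_S)$ is the pull-back. Its underlying log twisted differential factors uniquely through $S_m^{\mathrm{diff}}$ and its underlying admissible cover factors uniquely through $S_a$, producing morphisms $S' \to S_m^{\mathrm{diff}}$ and $S' \to S_a$ over $S^{\sharp}$, and hence a unique $S' \to S_m$ compatible with $S \to S_m$; Lemma \ref{lem:characteristic-inv} applied to $S'$ forces this morphism to be strict, giving uniqueness up to unique isomorphism. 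The main obstacle I anticipate is the hyperelliptic compatibility check in the third paragraph, specifically confirming that the identification $e_l = e_{\iota(l)}$ inside $\ocM_{S_a}$ is strong enough to upgrade the automatic $\iota$-equivariance of the pulled-back universal log twisted differential to the genuine anti-invariance $\iota^* \eta_m = -\eta_m$; this requires tracking how the negation sign, a $\CC^*$-datum, interacts with the minimal log structure at both $\iota$-fixed and $\iota$-swapped nodes.
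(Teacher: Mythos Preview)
Your construction coincides with the paper's: set the minimal base to be the fiber product $S_m^{\mathrm{diff}} \times_{S^\sharp} S_a$, pull back the minimal log twisted differential to it, and invoke Lemma~\ref{lem:characteristic-inv} for minimality; uniqueness in both arguments comes from the uniqueness of the minimal log twisted differential and of the minimal log admissible cover.

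The obstacle you flag at the end, however, is not genuine, and the paper dispatches it in one line rather than by the node-by-node analysis you propose. The point is that the morphism $S \to S_m$ has identity underlying map, so $\uC_{S_m} = \uC$ and $\underline{\eta_m} = \underline{\eta}$ as sections of the same line bundle on the same scheme. The anti-invariance $\iota^*\eta_m = -\eta_m$ splits into a statement on characteristic monoids (which, as you correctly argue, holds by construction of the fiber product and Lemma~\ref{lem:inv-monoid-2}) and a statement on the $\cO^*$-part of the log structures. But $\cO^*$ is a sheaf on the common underlying scheme, so the $\cO^*$-discrepancy between $\iota^*\eta_m$ and $-\eta_m$ is literally the same unit as the $\cO^*$-discrepancy between $\iota^*\eta$ and $-\eta$, and the latter vanishes by hypothesis. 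The paper phrases this as: $\eta_T$ (their notation for your $\eta_m$) pulls back to $\eta$ along a morphism with identity underlying map, and $\eta$ is hyperelliptic, hence so is $\eta_T$. No separate tracking of the sign at fixed versus swapped nodes is needed.
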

\begin{proof}
Note that the log twisted differential $\eta$ is the pull-back of $\eta_m$ via the following composition
\[
S \to S_m\times_{S^{\sharp}}S_a \to S_m.
\]
Denote by $T$ the log scheme with underlying $\uS$, and the log structure given by the pull-back of $S_m\times_{S^{\sharp}}S_a$. Then we obtain a log twisted differential $\eta_T$ over $T$ by pulling back $\eta_m$ via $T \to S_m$. We check that $\iota^*(\eta_T) = - \eta_T$ as $\eta_T$ can be further pulled back to $\eta$ which is hyperelliptic.  By Lemma \ref{lem:characteristic-inv}, we obtain a minimal log twisted hyperelliptic differential $\eta_T$ over $S_T$. The uniqueness of $\eta_T$ follows from the uniqueness of $\eta_m$.
\end{proof}

%%%%%%%%%%%%%%%%%%%%%%
\subsection{Moduli of log twisted hyperelliptic differentials}\label{ss:hyp-moduli}
%%%%%%%%%%%%%%%%%%%%%%

Recall the setting in Definition~\ref{def:log-adm}. Denote by 
\[
\cR' = \{r'_{i}\}_{i=1}^{n_1} 
\]
the set of markings fixed by the hyperelliptic involution, and 
\[
\cU' = \{(u'_j, u''_j)\}_{j=1}^{n_2}
\]
the set of pairs of markings that are interchanged by the hyperelliptic involution. Denote by 
\[
\mu_1 = (m_i)_{i=1}^{n_1} \in (2\cdot\ZZ_{\geq 0})^{n_1}
\]
where each $m_j$ is the contact order at $r_{i}$, and by
\[
\mu'_{2} = \mu''_2 = (c_j)_{j=1}^{n_2} \in (\ZZ_{>0})^{n_2}
\]
where each $c_j$ is the contact order along both $u'_j$ and $u''_j$. Note that markings in $\cR$ can have zero contact order. For simplicity, we introduce the notation 
\[
\mu = (\mu_1, \mu'_2, \mu''_2). 
\]
Denote by $\Hyp(\mu)$ the category of log twisted hyperelliptic differentials with contact orders given by $\mu$. This is a category fibered over the category of log schemes. By Proposition \ref{prop:min-univ}, it is represented by the category of minimal log twisted hyperelliptic differentials over the category of schemes with its universal minimal log structure. Note that 
\[
2g-2 = n_1 - 4.
\]
For $\Hyp(\mu)$ to be non-empty, we require that
\begin{equation}\label{equ:non-empty}
n_1 - 4 = \sum_{i=1}^{n_1} m_i + 2\cdot \sum_{j=1}^{n_2}c_j.
\end{equation}

Now consider the moduli stack of log admissible covers $\cA:=\cA(n_1, n_2)$. Denote by $\cH_{\cA}$ the Hodge bundle over $\cA$ given by the universal curve $\cC_{\cA} \to \cA$. 

\begin{proposition}\label{prop:hyp-algebracity}
The fibered category $\Hyp(\mu)$ is represented by a finite type, separated Deligne-Mumford stack with its universal minimal log structure. Furthermore, the tautological morphism to the Hodge bundle
\[
\Hyp(\mu) \to \cH_{\cA}
\]
is representable and quasi-finite. 
\end{proposition}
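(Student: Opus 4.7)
The plan is to realize $\Hyp(\mu)$ as a closed substack of a fibered product of two already-representable moduli stacks, and then invoke Proposition~\ref{prop:min-univ} to translate between log schemes and schemes equipped with the universal minimal log structure. By Propositions~\ref{prop:min-open} and~\ref{prop:min-univ}, it suffices to represent the fibered category of \emph{minimal} log twisted hyperelliptic differentials over the category of schemes; the resulting universal minimal log structure then supplies the claimed structure on $\Hyp(\mu)$.

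The first step is to construct the ambient algebraic stack. Consider the two forgetful morphisms to the log stack of pre-stable curves $\fM_{g,n}$, namely $\cA \to \fM_{g,n}$ (remembering only the canonical log curve, as in~\cite{Kim}) and $\lH(\mu) \to \fM_{g,n}$ from Corollary~\ref{cor:diff-moduli}. Form the fibered product in log algebraic stacks
$$\cZ \ := \ \lH(\mu) \times_{\fM_{g,n}} \cA.$$
Since both factors are algebraic log stacks (by Corollary~\ref{cor:diff-moduli} and~\cite{Kim}), so is $\cZ$. By Lemma~\ref{lem:characteristic-inv} together with Lemma~\ref{lem:inv-monoid-2}, the minimal log structure on $\cZ$ matches $\ocM(G, \iota)$ on the hyperelliptic locus, hence will give the correct universal minimal log structure after the next step.

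The hyperelliptic symmetry is then carved out by a closed condition. The involution $\iota$ on the admissible cover side acts on the universal log curve of $\cZ$, and thus pulls the universal log twisted differential $\eta$ back to another log twisted section $\iota^*\eta$ of the universal log line bundle $L$. The locus where $\eta + \iota^*\eta = 0$ as a section of $L$ is a closed substack of $\cZ$, which we define to be $\Hyp(\mu) \hookrightarrow \cZ$. The properties of being Deligne--Mumford, separated, and of finite type descend from the corresponding properties of $\lH(\mu)$ and $\cA$ via closed immersion and fibered product. For the tautological morphism, write $\cH_\cA = \cH \times_{\fM_{g,n}} \cA$ and factor it as
$$\Hyp(\mu) \ \hookrightarrow \ \cZ = \lH(\mu) \times_{\fM_{g,n}} \cA \ \longrightarrow \ \cH \times_{\fM_{g,n}} \cA = \cH_{\cA},$$
where the middle arrow is the base change along $\cA \to \fM_{g,n}$ of the finite representable morphism $\lH(\mu) \to \cH$ from Corollary~\ref{cor:diff-moduli}. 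The composition is therefore representable and quasi-finite, as required.

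The main obstacle is verifying that the hyperelliptic condition $\eta + \iota^*\eta = 0$ genuinely cuts out a closed substack in the \emph{log} (rather than merely the underlying scheme-theoretic) sense, and that this closed substack carries the expected minimal log structure $\ocM(G, \iota)$ rather than a coarser pushout monoid $\ocM(G) \oplus_{\ocM_{S^{\sharp}}} \ocM_{S_a}$. The former reduces to noting that $\iota$ is an $S$-automorphism, so it acts trivially on $\ocM_S$, and the vanishing condition on the two sections refines to a closed condition on the log enhancements. For the latter, the key input is precisely Lemma~\ref{lem:inv-monoid-2}: the identifications $e_l = e_{\iota(l)}$ forced by the admissible cover log structure on $\ocM_{S_a}$ match the relations defining $\ocN = \ocM(G,\iota)$, so the restriction of the $\cZ$-log structure to the closed substack $\Hyp(\mu)$ coincides with the universal minimal log structure of Definition~\ref{def:hyp-min}.
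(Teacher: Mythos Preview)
Your approach is essentially the paper's: form the fibered product of the moduli of log twisted differentials with $\cA$ over the log stack of curves, then carve out the hyperelliptic locus $\iota^*\eta=-\eta$ and invoke Lemma~\ref{lem:inv-monoid-2}/\ref{lem:characteristic-inv} to identify the resulting log structure with the minimal one. The one place the paper is more explicit is an intermediate step you absorb: before imposing $\iota^*\eta=-\eta$, the paper first passes to the \emph{open} locus $M''$ where the node contact orders satisfy $c_l=c_{\iota(l)}$, and only there asserts that the anti-invariance condition is closed. This matters for your final paragraph: the identification $\ocM(G)\oplus_{\ocM_{S^\sharp}}\ocM_{S_a}\cong \ocM(G,\iota)$ via Lemma~\ref{lem:inv-monoid-2} uses precisely those relations $c_l=c_{\iota(l)}$, so you should say explicitly that the log equality $\iota^*\eta=-\eta$ forces equality of node contact orders (hence lands in $M''$) before appealing to that lemma. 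With that one sentence added, your argument and the paper's coincide; as written, your claim that the condition is ``closed in $\cZ$'' and that the restricted log structure is automatically the minimal one skips this verification.
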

\begin{proof}
Let $C_{\cA}^{\sharp} \to \cA^{\sharp}$ be the canonical log curve associated to the underlying universal curve $C_{\cA} \to \cA$. Denote by $M$ the moduli of log twisted differentials over the family of curves $\uC_{\cA} \to \underline{\cA}$ with contact orders given by $\mu$. 

Consider the fiber product of log stacks
\[
M' = M \times_{\cA^{\sharp}}\cA.
\]
We notice that the morphism $M' \to M$ is representable and finite. Consider the locus $M'' \subset M'$ satisfying condition (3) of Lemma \ref{lem:graph-inv}. Since the contact order at a node remains constant in a connected family whenever the node persists, the locus $M''$ is represented by an open substack of $M'$. By a similar argument as in Lemma \ref{lem:inv-monoid-2}, for each geometric point of $M''$, the corresponding graph and the hyperelliptic involution satisfy conditions (1), (2), and (3) of Lemma \ref{lem:graph-inv}. By Lemma \ref{lem:inv-monoid-2}, the characteristic monoid $\ocM_{M'',s}$ at each geometric point $s \in M''$ is isomorphic to $\ocM(G_s,\iota)$ where $G_s$ is the graph of the fiber over $s$. Moreover, the hyperelliptic involution acts trivially on $M''$. 

Note that the locus in $M''$ satisfying the condition $\iota^*\eta = -\eta$ is represented by a closed substack of $M''$, which we denote by $M'''$. Thus, the family of log twisted differentials over $M'''$ admits the hyperelliptic involution, and is minimal in the sense of Definition \ref{def:hyp-min}. This induces a tautological morphism 
\begin{equation}\label{equ:hyp-construction1}
M''' \to \Hyp(\mu).
\end{equation}

On the other hand, the two forgetful morphisms $\Hyp(\mu) \to \cA$ and $\Hyp(\mu) \to M$ induce a morphism $\Hyp(\mu) \to M'$ which factors through the morphism 
\begin{equation}\label{equ:hyp-construction2}
\Hyp(\mu) \to M'''.
\end{equation}
We observe that the morphisms (\ref{equ:hyp-construction1}) and (\ref{equ:hyp-construction2}) are inverse to each as morphisms of fibered categories, hence it proves the representability. The separatedness and boundedness follow from Theorem~\ref{thm:log-twisted-sec}.

Finally, the above argument shows that the morphism $\Hyp(\mu) \to M$ is representable and quasi-finite, so the second claim follows from the finiteness of Theorem \ref{thm:log-twisted-sec}.
\end{proof}

%%%%%%%%%%%%%%%%%%%%%%
\subsection{Relative properness}
%%%%%%%%%%%%%%%%%%%%%%
\begin{proposition}\label{prop:hyp-prop}
The forgetful morphism $\Hyp(\mu) \to \cH_{\cA}$ is finite.
\end{proposition}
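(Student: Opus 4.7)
The strategy is to show that $\Hyp(\mu) \to \cH_{\cA}$ is proper; combined with the representable quasi-finiteness and separatedness established in Proposition~\ref{prop:hyp-algebracity} this upgrades the morphism to finite. I work with the factorization
\[
\Hyp(\mu) = M''' \subset M'' \subset M' \to M \to \cH_{\cA}
\]
obtained in the proof of Proposition~\ref{prop:hyp-algebracity}, in which $M \to \cH_{\cA}$ is representable and finite by Theorem~\ref{thm:log-twisted-sec}, $M' = M\times_{\cA^{\sharp}}\cA \to M$ is representable and finite, $M''$ is an open substack of $M'$, and $M'''$ is a closed substack of $M''$. It therefore suffices to prove that $M''' \subset M'$ is closed.

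The key point is that over $M'$ the universal hyperelliptic involution $\iota$ acts on the underlying universal curve (coming from the $\cA$-factor), so the underlying section $\underline{\eta}$ of the universal dualizing line bundle admits a pull-back $\iota^{*}\underline{\eta}$ as a section of the same line bundle. The vanishing locus
\[
Z \ := \ \{\iota^{*}\underline{\eta} + \underline{\eta} = 0\} \ \subset \ M'
\]
is therefore a closed substack. Next I would argue that $Z = M'''$. On the one hand, $M''' \subset Z$ by the defining anti-invariance. On the other hand, at any geometric point $s \in Z$, the underlying equality lifts to the identity $\iota^{*}\bar\eta^{\flat} = \bar\eta^{\flat}$ on the characteristic monoid of the log twisted differential over $s$ (this is exactly the input used in the proof of Lemma~\ref{lem:graph-inv}); hence Lemma~\ref{lem:graph-inv} applies and the weighted graph at $s$ satisfies the $\iota$-invariance and contact-order matching conditions defining $M''$. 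So $Z \subset M''$, and by construction $Z \cap M'' = M'''$, giving $Z = M'''$. Consequently $M''' = Z$ is closed in $M'$, and the composition $\Hyp(\mu) = M''' \hookrightarrow M' \to M \to \cH_{\cA}$ is a composition of representable finite morphisms, yielding the claim.

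The main difficulty is the bootstrapping step showing $Z \subset M''$: while the underlying condition $\iota^{*}\underline{\eta} = -\underline{\eta}$ is manifestly closed, translating it into the log-theoretic symmetry required by $M''$ (invariance of the weighted graph, matching contact orders at $\iota$-conjugate nodes, matching degeneracies of $\iota$-conjugate components) is not formal—it uses the local descriptions of log twisted sections in Section~\ref{ss:local-structure} together with the saturation construction of $\ocM(G)$ from Section~\ref{ss:minimality}. Lemma~\ref{lem:graph-inv} packages exactly this verification, so the proof reduces to re-invoking it at each geometric fiber of $Z$.
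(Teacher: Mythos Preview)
Your reduction to showing $M'''\subset M'$ is closed is a sensible strategy, and the paper's proof is morally equivalent (it runs the valuative criterion for $\Hyp(\mu)\to M'$, which amounts to the same thing). However, your execution has a genuine gap at the step $Z\subset M''$. You define $Z$ by the \emph{underlying} equation $\iota^*\underline{\eta}+\underline{\eta}=0$, and then claim this lifts to $\iota^*\bar\eta^{\flat}=\bar\eta^{\flat}$ on characteristic monoids. But on every degenerate component (those with $e_v\neq 0$) the underlying section $\underline{\eta}$ is identically zero, so the underlying anti-invariance is vacuous there and says nothing about the contact orders or degeneracies encoded in $\bar\eta^{\flat}$. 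Concretely, a point of $M'$ where two $\iota$-conjugate nodes lie entirely in the degenerate locus can have $c_l\neq c_{\iota(l)}$ while still satisfying $\iota^*\underline{\eta}=-\underline{\eta}$; such a point lies in $Z\setminus M''$. Your citation of Lemma~\ref{lem:graph-inv} does not help: that lemma \emph{assumes} the log-level identity $\iota^*\eta=-\eta$ (Definition~\ref{def:hyp-diff}) and deduces the characteristic identity from it, not from the underlying one. For the same reason the assertion $Z\cap M''=M'''$ is not automatic either, since $M'''$ is cut out in $M''$ by the log-level condition $\iota^*\eta=-\eta$, not by the underlying one.

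The paper avoids this by using the valuative criterion and the fact (built into the theory of stable log maps) that contact orders at nodes are constant under specialization: if the generic fiber over a DVR lies in $M''$ then the closed fiber does too, because any node of the special fiber that persists generically inherits its contact order from there, and $\iota$-conjugate nodes persist or smooth in pairs. Once the special fiber lands in the open $M''$, closedness of $M'''\subset M''$ finishes. If you want to repair your direct approach, you would need to replace $Z$ by a locus defined via the log data---for instance, first argue that the $\iota$-matching of contact orders is closed under specialization (which is exactly the paper's input) to get $M''$ closed in $M'$, and then use that $M'''$ is closed in $M''$.
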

\begin{proof}
We use the same notations in the proof of Proposition \ref{prop:hyp-algebracity}. By the previous proposition, it suffices to show that $\Hyp(\mu) \to \cH_{\cA}$ is proper. Since the morphism $M' \to \cH_{\cA}$ is finite, it remains to show that the morphism $\Hyp(\mu) \to M'$ is proper by the valuative criterion. Namely, we want to show that there is a unique dashed arrow fitting in the following commutative diagram of solid arrows
\[
\xymatrix{
\spec K \ar[r] \ar[d] & \underline{\Hyp(\mu)} \ar[d] \\
\spec R \ar[r] \ar@{-->}[ur]& \underline{M'}. 
}
\]

Denote by $S$ the log scheme with $\uS = \spec R$, and the log structure pulled back from $M'$. Let $T$ and $t$ be the generic and closed points of $S$ with the pull-back log structures respectively. Denote by $(\eta: C \to L, C \to P)$ the family over $S$ pulled back from that over $N'$. Here $\eta$ is the log twisted differential, and $C \to P$ is the log admissible cover.  

By the proof of Proposition \ref{prop:hyp-algebracity}, it suffices to show that the closed fiber $\eta_s$ lies in the open locus $M''$. Note that contact orders remain unchanged under specialization. In particular, condition (3) of Lemma \ref{lem:graph-inv} at the conjugate nodes holds over the closed fiber. Hence $\eta_s$ lies in $M''$. This finishes the proof.
\end{proof}

%%%%%%%%%%%%%%%%%%%%%%
\subsection{Log smoothness of moduli of log twisted hyperelliptic differentials}\label{ss:hyp-smooth}
%%%%%%%%%%%%%%%%%%%%%%

To prove the log smoothness, we first introduce log twisted quadratic differentials over rational curves.  Consider the two sets of markings 
\[ 
\cR = \{r_{i}\}_{i=1}^{n_1}, \quad \cU = \{u_j\}_{j=1}^{n_2}.
\]
with the two sets of contact orders respectively
\[
\mu_{1} = (m_i)_{i=1}^{n_1}, \quad  \mu'_2 = (2\cdot c_j)_{j=1}^{n_2},
\]
where we require $m_i \in \NN$ and $c_j \in \NN_{>0}$. Denote by $\mu' = (\mu_{1}, \mu'_2)$ and $n = n_1 + n_2$. 

Consider the universal family of canonical log curves $\PP := \oC_{0,n} \to M := \oM_{0,n}$ with the line bundle
\begin{equation*}%\label{equ:quad-diff-bundle}
\uQ = \omega_{\uPP/\uM}^{\otimes 2}\left(\sum_{i=1}^{n_1}r_i\right). 
\end{equation*}
over the underlying curve $\uPP$. We are in the situation of Section \ref{ss:log-section}. Denote by $Q$ the log stack with the underlying structure $\uQ$, where the log structure of $Q$ is obtained similarly as in~(\ref{eq:log-lift}) by combining the log structures from the zero section and from the curve $\PP$. By Theorem \ref{thm:log-twisted-sec}, we thus obtain the moduli space of log twisted quadratic differentials $\cQ := \lM(\mu')$. Denote by $\frq: \PP_{\cQ} \to Q_{\cQ}$ the universal quadratic differential over $\cQ$. For $\cQ$ to be non-empty, we require that 
\begin{equation*}%\label{equ:non-empty2}
n_1 - 4 = \sum_{i=1}^{n_1}m_i + 2 \cdot \sum_{j=1}^{n_2}c_j,
\end{equation*}
which is identical to (\ref{equ:non-empty}).

\begin{proposition}\label{prop:qudratic-smooth}
The forgetful morphism $\cQ \to M$ is log smooth. In particular, the log stack $\cQ$ is log smooth.
\end{proposition}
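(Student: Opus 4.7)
\medskip
\noindent\textit{Proposal.}
The plan is to verify log smoothness of $\cQ \to M$ via Kato's chart criterion: étale locally at each point of $\cQ$ we must produce an injective, integral morphism of fs monoids $P \to Q$ with torsion-free cokernel on groupifications, such that the underlying morphism $\underline{\cQ} \to \underline{M} \times_{\spec \ZZ[P]}\spec \ZZ[Q]$ is étale. By Theorem~\ref{thm:log-twisted-sec}, $\cQ$ carries its universal minimal log structure, so we work at a geometric point $s \in \cQ$ corresponding to a minimal log twisted quadratic differential $\eta_s$ over a genus-zero log curve $\PP_s \to \spec\kk$. Let $G$ be the associated weighted graph; by minimality (see Section~\ref{ss:minimality}), $\ocM_{\cQ,s} = \ocM(G)$, while $\ocM_{M,s} = \NN^{E(G)}$, freely generated by the smoothing parameters of the nodes of $\underline{\PP}_s$. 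The natural map $\ocM_{M,s} \to \ocM_{\cQ,s}$ sends each $e_l$ to the image of $e_l \in \ocM(G)$.

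Next I would exploit the key feature that the dual graph $\underline G$ is a \emph{tree} (since $\PP_s$ is rational). Fixing a root vertex $v_0$ and orienting every edge toward $v_0$, the relations \eqref{equ:edge-relation} recursively express every degeneracy $e_v$ as an explicit non-negative (when oriented compatibly) or signed combination of the smoothing parameters $\{e_l\}$ and of $e_{v_0}$, with the normalization $e_{v_0}=0$ when $v_0 \in V^{nd}(G)$. Combined with the non-degeneracy relations $e_v = 0$ for $v \in V^{nd}(G)$, this presents $\ocM(G)^{gp}$ as a quotient of the free abelian group on $\{e_l,e_v\}$ by relations that are linear in the $e_l$ alone, and then $\ocM(G)$ is the saturation in its torsion-free quotient. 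From this presentation one can read off that $\ocM_{M,s}^{gp} \to \ocM(G)^{gp}$ is injective with torsion-free cokernel, and that the map of monoids is integral (the tree structure rules out cycle obstructions to integrality).

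On the underlying side, I would use the local description of log twisted sections from Section~\ref{ss:local-structure} together with the local equations \eqref{equ:node-smoothing} for the log curve to build an explicit versal deformation. A straightforward dimension count shows the expected relative dimension vanishes: on a smooth rational $n$-marked curve, the condition that $\sum m_i + 2\sum c_j = n_1-4 = \deg\uQ$ forces the space of sections of $\uQ$ with the prescribed vanishing to be a single $\CC^*$-orbit, and this $\CC^*$-direction is absorbed into the log structure via the degeneracy $e_{v_0}$ (which, for a smooth fiber, simply records the chosen scaling). Using this, I would show that the tautological map $\underline{\cQ} \to \underline{M} \times_{\spec\ZZ[\ocM_M]} \spec \ZZ[\ocM(G)]$ is strict étale at $s$, completing the verification of Kato's criterion.

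The main obstacle will be Step~2: giving a clean, coordinate-free description of $\ocM(G)$ and checking integrality and torsion-freeness of $\ocM_{M,s}^{gp}\to\ocM(G)^{gp}$ in the presence of both $V^{d}$ and $V^{nd}$ vertices (in particular, when every endpoint of an edge is non-degenerate, the relation collapses $c_l e_l = 0$ and forces $c_l = 0$, and one must verify that the saturation and quotient do not introduce torsion). Once this is handled, the ``in particular'' statement that $\cQ$ is itself log smooth follows because $M = \oM_{0,n}$ is log smooth over $\spec\kk$ and log smoothness composes.
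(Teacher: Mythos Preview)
Your approach via Kato's chart criterion is genuinely different from the paper's, and while the combinatorial analysis of $\ocM(G)$ using the tree structure is a reasonable idea, there is a real error in your dimension count. You claim the expected relative dimension vanishes and that the $\CC^*$-scaling of the section is ``absorbed into the log structure via the degeneracy $e_{v_0}$.'' This is not how the minimal log structure works. Over a smooth rational curve with a \emph{non-degenerate} section, the single vertex lies in $V^{nd}(G)$, so $e_{v_0}=0$ and the minimal monoid $\ocM(G)$ is trivial; nothing is absorbed. The $\CC^*$-family of sections of $\uQ$ with the prescribed vanishing is a genuine one-parameter family in $\cQ$, so $\cQ\to M$ has relative dimension $1$, not $0$. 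Consequently the map $\underline{\cQ}\to\underline{M}\times_{\spec\ZZ[P]}\spec\ZZ[Q]$ cannot be \'etale; at best you can hope to show it is smooth of relative dimension $1$. Your plan could likely be repaired along these lines, but the verification becomes more involved, and you would still need to carry out the integrality and torsion-freeness checks you flag as the main obstacle.

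By contrast, the paper's proof is a two-line deformation argument: for any $q\in\cQ(S)$, the obstruction to log-smooth deformations relative to $M$ lies in $H^1(q^*T_{L_\cQ/\PP_\cQ})$, where $T_{L_\cQ/\PP_\cQ}$ is the relative log tangent bundle of the line bundle (with its zero-section log structure) over the curve. A direct computation shows this log tangent bundle is trivial, so the obstruction is $H^1(\cO_{\PP})$, which vanishes because $\PP$ has genus zero. This bypasses all chart combinatorics and immediately gives log smoothness (and, incidentally, identifies the relative tangent space as $H^0(\cO)\cong\kk$, confirming relative dimension $1$).
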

\begin{proof}
Note that for a quadratic differential $q \in \cQ(S)$ over a log scheme $S$, the obstruction relative to $M$ is given by $H^1(q^*T_{L_\cQ/\PP_\cQ})$ where $T_{L_\cQ/\PP_\cQ}$ is the log tangent bundle of $L_{\cQ} \to \PP_\cQ$. By a direct degree calculation, we observe that $T_{L_\cQ/\PP_\cQ}$ is the trivial line bundle. Since $\PP_\cQ$ is a family of rational curves, we conclude that the obstruction $H^1(q^*T_{L_\cQ/\PP_\cQ}) = H^1(\cO)= 0$ vanishes. This proves the statement. 
\end{proof}

Consider a log twisted hyperelliptic differential $(\eta: C \to L_{C}, C \to P)$ over a log scheme $S$, where $L_{C}$ is the log scheme over $C$, with the underlying structure given by $\omega_{C/S}$, and the log structure $\cN_{L_C}$ given by the zero section and the log structure pulled back from $C$, see (\ref{eq:log-lift}).  The family $P \to S$ induces a morphism $S \to M$. Denote by $Q_{C}$ and $Q_{P}$ the log schemes over $C$ and $P$ respectively obtained by pulling back $Q$ via $S \to M$. We then have
\begin{equation*}
\uQ_{C} = \omega_{C/S}^{\otimes 2}.
\end{equation*}
The non-linear morphism
\[
\omega_{C/S} \to \omega_{C/S}^{\otimes 2}
\]
induces a morphism of log schemes
\begin{equation*}
L_{C} \to Q_{C}.
\end{equation*}
Define $\eta^{\otimes 2}$ to be the composition
\[
C \to L_{C} \to Q_{C}.
\]

\begin{lemma}\label{lem:hyp-quadratic}
The morphism $\eta^{\otimes 2}$ is invariant under $\iota$, hence descends to a log twisted quadratic differential $q : P \to L$. 
\end{lemma}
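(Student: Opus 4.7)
The plan is to exploit the identity $(-\eta)^{\otimes 2} = \eta^{\otimes 2}$: squaring absorbs the sign in $\iota^*\eta = -\eta$, turning the anti-invariant differential $\eta$ into an invariant quadratic differential $\eta^{\otimes 2}$, which then descends along the hyperelliptic double cover $C \to P$.

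First I would set up the canonical lifts of $\iota$ to the total spaces: the lift $\iota_L : L_C \to L_C$ covering $\iota$ acts by $-1$ on the fibers of $\omega_{C/S}$, and the lift $\iota_Q : Q_C \to Q_C$ acts by $+1$ on the fibers of $\omega_{C/S}^{\otimes 2}$. Then I would verify that the squaring map $\mathrm{sq}: L_C \to Q_C$ is $\iota$-equivariant as a morphism of log schemes. On underlying schemes this is immediate from $(-v)^{\otimes 2} = v^{\otimes 2}$. On log structures, $\mathrm{sq}^{\flat}$ doubles the generator of the zero-section part (since $s = t^{\otimes 2}$ means the divisor of the zero section of $Q_C$ pulls back to twice that of $L_C$) and is the identity on the pullback of $\cM_C$. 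Both $\iota_L^{\flat}$ and $\iota_Q^{\flat}$ fix the respective zero-section characteristics (the involutions preserve the zero divisor set-theoretically, and the induced $\pm 1$ on units becomes trivial on the characteristic quotient), and they act via the already-fixed $\iota$-action on $\cM_C$ on the other summand. Hence $\mathrm{sq}$ commutes strictly with the involutions.

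Combining this equivariance with the hypothesis $\iota^*\eta = -\eta$ from Definition~\ref{def:hyp-diff} gives
\begin{equation*}
\iota^*(\eta^{\otimes 2}) \;=\; \mathrm{sq} \circ \iota^*\eta \;=\; \mathrm{sq} \circ (-\eta) \;=\; (-\eta)^{\otimes 2} \;=\; \eta^{\otimes 2},
\end{equation*}
so that $\eta^{\otimes 2} : C \to Q_C$ is an $\iota$-invariant log morphism over $S$.

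The main obstacle is then the descent step: I need to identify $Q_C$ as the pullback of $Q_P$ along $\pi: C \to P$ in a manner compatible with log structures, so that $\iota$-invariant sections descend. On underlying schemes, Riemann--Hurwitz gives $\omega_{C/S}^{\otimes 2} \cong \pi^{*}\bigl(\omega_{P/S}^{\otimes 2}(\sum r_i)\bigr)$, so $\uQ_C \cong \pi^{*}\uQ_P$. For log structures, the potential discrepancy is concentrated at the ramification markings $r'_i \in C$ vs.\ $r_i \in P$, where the divisorial log structure at $r_i$ pulls back to twice the one at $r'_i$; this multiplication by $2$ matches the evenness condition $m_i \in 2\cdot \ZZ_{\geq 0}$ on the hyperelliptic signature and the doubling built into the definition of $Q_P$ as $\omega_{P/S}^{\otimes 2}(\sum r_i)$. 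Once this identification is verified, the $\iota$-invariant log section $\eta^{\otimes 2}$ descends uniquely to a section $q : P \to Q_P$ of the projection, i.e.\ a log twisted quadratic differential. The contact orders of $q$ are inherited correctly: $m_i$ at $r_i$ (pulling back to contact order $2m_i$ at $r'_i$, matching $\eta^{\otimes 2}$) and $2 c_j$ at the unramified $u_j$, exactly the signature $\mu' = (\mu_1, \mu'_2)$ defining the moduli $\cQ$.
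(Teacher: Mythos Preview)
Your proposal is correct and follows the same idea as the paper: the paper's proof is the single line ``This follows from the property that $\iota^*\eta = -\eta$,'' and your argument simply unpacks what this means---that squaring kills the sign so $\eta^{\otimes 2}$ is $\iota$-invariant, and then descends along the double cover $C \to P$. Your careful verification of the log-structure compatibility and the Riemann--Hurwitz identification $\uQ_C \cong \pi^*\uQ_P$ are details the paper leaves implicit, but the approach is the same.
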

\begin{proof}
This follows from the property that $\iota^*\eta = -\eta$. 
\end{proof}

For convenience, we will denote the induced log twisted quadratic differential $q$ by $\eta^{\otimes 2}$. Thus Lemma \ref{lem:hyp-quadratic} yields a morphism of log stacks
\begin{equation}\label{equ:hyp-quadratic}
\Hyp(\mu) \to \cQ , \ \ \ \eta \mapsto \eta^{\otimes 2}. 
\end{equation}

\begin{proposition}\label{prop:hyp-smooth}
The morphism \eqref{equ:hyp-quadratic} is log \'etale. In particular, the log stack $\Hyp(\mu)$ is log smooth. 
\end{proposition}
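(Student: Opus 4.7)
The plan is to verify the infinitesimal lifting criterion for log \'etale morphisms for the squaring map $\Hyp(\mu) \to \cQ$, and then to deduce log smoothness of $\Hyp(\mu)$ by combining this with the log smoothness of $\cQ$ from Proposition~\ref{prop:qudratic-smooth}. The heuristic is that, given $\eta^{\otimes 2}$ together with the data of the admissible double cover $C \to P$, one recovers $\eta$ uniquely as an anti-invariant square root on $C$.

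Concretely, consider a strict square-zero thickening $T_0 \into T$ of log schemes equipped with a log twisted hyperelliptic differential $(\eta_0, C_0 \to P_0)$ over $T_0$ and a log twisted quadratic differential $(q, P \to T)$ over $T$ restricting to $(\eta_0^{\otimes 2}, P_0 \to T_0)$. First, the admissible cover $C_0 \to P_0$ extends uniquely to a log admissible cover $C \to P$ over $T$, because the moduli stack $\cA$ of log admissible covers recalled in Section~\ref{ss:log-adm} is log \'etale over $\oM_{0,n}$ (\cite{Kim}) and $P$ is already given with its log structure. Second, the canonical identification $\pi^*\bigl(\omega_{P/T}^{\otimes 2}(\sum r_i)\bigr) \simeq \omega_{C/T}^{\otimes 2}$, arising from the simple ramification of $\pi\colon C \to P$ along $\{r_i\}$, allows us to view $\pi^*q$ as a section of $\omega_{C/T}^{\otimes 2}$ that is $\iota$-invariant. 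Locally there are exactly two anti-invariant square roots of $\pi^*q$ in $\omega_{C/T}$ differing by $\pm 1$, and the compatibility $\eta|_{T_0} = \eta_0$ singles out one.

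To check that the lift $\eta$ so constructed defines a point of $\Hyp(\mu)$ with the required minimal log structure, I would verify on each geometric fiber that the squaring operation induces an isomorphism between the minimal monoid $\ocM(G,\iota)$ of $\Hyp(\mu)$ (Section~\ref{ss:hyp-minimality}) and the minimal monoid $\ocM(G')$ attached to the quotient graph $G' = G/\iota$ on the $\cQ$ side. The description of $\ocM(G,\iota)$ from Lemma~\ref{lem:inv-monoid-2}, in which $e_l$ and $e_{\iota(l)}$ are identified for each conjugate pair of edges, matches exactly the smoothing parameters of nodes of $P$; at $\iota$-fixed edges (Weierstrass nodes) the local contact order doubles, in accordance with the relation $\mu'_2 = (2c_j)$ built into the set-up of $\cQ$. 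Together with the preceding step this produces the unique log lift and establishes log \'etaleness.

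The main obstacle is the second step at the ramification markings and $\iota$-fixed nodes, where the double cover has local form $\pi^*t = s^2$. There the square root of $\pi^*q$ must be extracted canonically as an anti-invariant section of $\omega_{C/T}$, with the contact order shifting correctly between the quadratic bundle $\omega_{P/T}^{\otimes 2}(\sum r_i)$ on $P$ and $\omega_{C/T}$ on $C$ after accounting for the extra boundary divisor on the base. The log-structure analysis at such points parallels the orbifold base change of Section~\ref{ss:orbi-log-curve}, relating the smoothing parameter at a Weierstrass node on $P$ to that on $C$ by a factor of two. Once this local compatibility is verified, the infinitesimal lift assembles globally, yielding log \'etaleness, and log smoothness of $\Hyp(\mu)$ then follows from Proposition~\ref{prop:qudratic-smooth}.
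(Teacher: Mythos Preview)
Your approach is essentially the same as the paper's: verify the infinitesimal lifting criterion for log \'etale morphisms by first lifting the admissible cover via the log \'etaleness of $\cA \to M$, and then constructing the unique anti-invariant square root of the pulled-back quadratic differential. The paper sets this up with $\uT' = \spec k[\epsilon]/(\epsilon^2)$ and proceeds exactly along these lines.

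The one place where the paper is more explicit than your outline is precisely the point you flag as the ``main obstacle.'' On degenerate components the underlying section of $\pi^*q$ vanishes identically, so there is no classical square root to take and the sign ambiguity is not yet visible; the information lives entirely in the log structure. The paper resolves this not through an orbifold or monoid comparison, but by a direct chart trick: fix a chart $\ocM_T \to \cM_{T'}$, and near any point $x$ in a component with degeneracy $e_v$ consider $q^{\flat}_{T} - 2e_v$, which now corresponds to a genuinely non-degenerate meromorphic section of the underlying line bundle. This has a unique anti-invariant square root $\ueta_{T'}|_U$ in the classical sense, and one then sets $\eta^{\flat}_{T'}|_U = \ueta_{T'}|_U + e_v$. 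Uniqueness of the local construction gives the gluing, and the condition $\iota^*\eta = -\eta$ together with $\eta^{\otimes 2}_{T'} = q$ makes the result independent of the chart. Your paragraph on matching $\ocM(G,\iota)$ with the minimal monoid of $\cQ$ is a reasonable sanity check but is not actually needed for the lifting criterion; once the lift $\eta_{T'}$ is constructed as above, the map $T' \to \Hyp(\mu)$ is automatic from Proposition~\ref{prop:min-univ}.
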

\begin{proof}
We check the log smoothness using the local lifting property \cite[(3.3)]{KKato}. Consider the following commutative diagram of solid arrows
\begin{equation*}
\xymatrix{
T \ar[rr] \ar[d]  && \Hyp \ar[d] \ar[rr] && \cA \ar[d] \\
T' \ar[rr]  \ar@{-->}[urr] \ar@{-->}[urrrr] && \cQ \ar[rr] && M 
}
\end{equation*}
where 
\begin{enumerate}
 \item $\cA = \cA(n_1, n_2)$ is the moduli of log admissible covers; 
 \item The underlying structure $\uT' = \spec k[\epsilon]/(\epsilon^2) $ for an algebraically closed field $k$; 
 \item The morphism $T \to T'$ is a strict closed immersion with $\uT$ given by the closed point. 
\end{enumerate}
It suffices to show that there is a unique lift $T' \to \Hyp$ that makes the above diagram commutative. 

Note that $\cA \to M$ is a log \'etale morphism. Thus there is a unique lift $T' \to \cA$ making the above diagram commutative. We now arrive at a commutative diagram of solid arrows as follows
\begin{equation*}
\xymatrix{
\omega_{T} \ar[rr] \ar[d] & & \omega_{T'} \ar[d] \ar[rr] && L_{C_{T'}/T'} \ar[lld] \\
C_{T} \ar@/^/[u]^{\eta_{T}} \ar[rr] \ar[d] && C_{T'} \ar[d] \ar@{-->}@/^/[u]^{\eta_{T'}}  \ar@/_1pc/[urr]_{q}  & \\ 
T \ar[rr] && T'
}
\end{equation*}
where $\eta_{T}$ is the log twisted hyperelliptic differential given by $T \to \Hyp$, the family $C_{T'} \to T'$ is the hyperelliptic curve given by $T' \to \cA$, and $q$ is the pull-back of the quadratic differential given by $T' \to \cQ$. It suffices to show that the lift $\eta_{T'}$ of $\eta_{T}$ exists and is unique. 

We next construct such a lift. First observe that the existence and uniqueness of $\eta_{T'}$ are obvious over the non-degenerate components of the curve. On the degenerate locus, the underlying morphism $\eta_{T'}$ has to be a constantly zero section, hence is also unique. It remains to consider the part of log structures. 

Denote by $G$ the graph associated to $\eta_T$. Fixing a chart $\ocM_T \to \cM_{T'}$, we may identify elements of $\ocM(G)$ with their images in $\cM_{T'}$. Note that $T$ and $T'$ have the same topological space. For any point $x \in C_{T}$, take a small connected neighborhood $U$ of $x$ in $T'$ such that either $x$ is a smooth unmarked point of $C_{T'}$ and $U$ contains no special point, or $x$ is the only special point contained in $U$. By the discussion in Section~\ref{ss:local-structure}, there is a vertex $v$ corresponding to a component containing $x$ such that over $U$, the morphism
\[
\eta^{\flat}_{T} - e_v : \cM_{\omega_{T}}|_{U} \to \cM_{C_{T}}|_{U}, \ \ \ \delta \mapsto \eta^{\flat}_{T}(\delta) - e_v 
\]
induces a non-degenerate meromorphic section of the underlying bundle of $\omega_{T}$. Hence the morphism
\[
q^{\flat}_{T} - 2\cdot e_v : \cM_{L_{C_{T'}/T'}}|_{U} \to \cM_{C_{T}}|_{U}, \ \ \ \delta \mapsto \eta^{\flat}_{T}(\delta) - 2\cdot e_v 
\]
also induces a non-degenerate meromorphic section of the underlying bundle of $L_{C_{T'}/T'}$. Consequently there is a unique square root of the meromorphic section induced by $q^{\flat}_{T} - 2\cdot e_v $, which we denote by $\ueta_{T'}|_{U}$, such that $\iota^*\ueta_{T'}|_{U} = - \ueta_{T'}|_{U}$. Define $\eta^{\flat}_{T'}|_{U} = \ueta_{T'}|_{U} + e_v$. Since the choice of $\eta^{\flat}_{T'}|_{U}$ exists and is unique locally, it can be glued together and form a global morphism $\eta^{\flat}_{T'} : \cM_{\omega_{T}} \to \cM_{C_{T}}$.  This defines the log twisted hyperelliptic differential $\eta_{T'}$. The compatibility with $\eta_{T}$ follows from the construction. 

Finally, we remark that since the involution $\iota$ fixes the base log structure and since $\eta^{\otimes 2}_{T'} = q$, the outcome $\eta_{T'}$ is independent of the choice of charts. This finishes the proof.
\end{proof}

%%%%%%%%%%%%%%%%%
%\bibliographystyle{amsalpha}             % (uses file "plain.bst")
%\bibliography{FlatGeometry} 

\newcommand{\etalchar}[1]{$^{#1}$}
\providecommand{\bysame}{\leavevmode\hbox to3em{\hrulefill}\thinspace}
\providecommand{\MR}{\relax\ifhmode\unskip\space\fi MR }
% \MRhref is called by the amsart/book/proc definition of \MR.
\providecommand{\MRhref}[2]{%
  \href{http://www.ams.org/mathscinet-getitem?mr=#1}{#2}
}
\providecommand{\href}[2]{#2}

\end{document}